\numberwithin{equation}{section}
\newtheorem{theorem}{Theorem}[section]
\newtheorem{corollary}[theorem]{Corollary}%[section]
\newtheorem{claim}{Claim}
\newtheorem{lemma}[theorem]{Lemma}
\newtheorem{proposition}[theorem]{Proposition}
\newtheorem{conjecture}{Conjecture}
\theoremstyle{definition}
\newtheorem{definition}[theorem]{Definition}
\newtheorem{remark}[theorem]{Remark}%[section]
\newcommand{\ep}{\varepsilon}
\def\Inf{\operatornamewithlimits{inf\vphantom{p}}}
\title[On the evolution of slow dispersal]
\thanks{$^1$ Department of Mathematics, University of Miami, Coral Gables, FL 33124, USA}
\thanks{$^2$ Department of Mathematics, Ohio State University, Columbus, OH 43210, USA}
\thanks{RSC is partially supported by NSF grant DMS-1853478; KYL is partially supported by NSF grant DMS-1853561}
\author[R.S. Cantrell and K.-Y. Lam]
 \subjclass[2010]{35Q92,37B35,37L45,35B40 	 	
 }
 \keywords{evolution of dispersal, reaction-diffusion, Perturbation of Morse decomposition, Floquet bundle, competition exclusion
 }
\begin{document}

\maketitle

\begin{abstract}
For any $N \geq 2$, we show that there are choices of diffusion rates $\{d_i\}_{i=1}^N$ such that for $N$ competing species which are ecologically identical and having distinct diffusion rates, the slowest disperser is able to competitive exclude the remainder of the species. In fact, the choices of such diffusion rates is open in the Hausdorff topology. Our result provides some evidence in the affirmative direction regarding the conjecture by Dockery et al. in 1998.
The main tools include Morse decomposition of the semiflow, as well as the theory of normalized principal Floquet bundle for linear parabolic equations. A critical step in the proof is to establish the smooth dependence of the Floquet bundle on diffusion rate and other coefficients, which may be of independent interest. 
\end{abstract}

\section{Introduction}

Many organisms adapt to the surrounding environment through their dispersal behavior. It is important to determine the circumstances in which organisms 
modify their dispersal strategies
%increase or decrease in their tendency to disperse 
under the driving forces of evolution.
In a pioneering paper, Hastings introduced the point of view of studying the effect of individual factors on the evolution of dispersal independently, using mathematical modeling \cite{Hastings1983}. By analyzing the outcome of invasion between two competing species, assuming they are identical except for their dispersal rates, Hastings showed that passive diffusion is selected against in an environment that is constant in time, but varies in space. Subsequently, Dockery et al. \cite{Dockery1998} 
refined Hastings' findings via
a more explicit Lotka-Volterra model. They showed that it is impossible for two or more ecologically identical species, moving randomly at different rates, to coexist at an equilibrium. When the number of species is equal to two, they determined the global dynamics of the competition system completely by
demonstrating the faster disperser is always driven to extinction by the slower disperser, regardless of initial conditions. 
%the slower disperser not only invades the faster disperser, but also always competitively exclude it regardless of initial conditions. 

The work of Hastings and Dockery et al. have been highly influential in prompting advances in both mathematical and biological aspects of the evolution of dispersal. In \cite{Altenberg} Altenberg showed a reduction principle, which says that the growth bounds for certain class of linear operators exhibit monotone dependence on the mixing coefficient. This principle gives a mathematical explanation of the relative proliferative advantage of slower dispersers in a static, but spatially varying environment. 
It is also demonstrated that slow dispersal might not be advantageous if time-periodicity is included \cite{Hutson2001}.

The theory of habitat selection can also explain the evolution of slow dispersal among passive dispersers. As observed by Hastings, passive diffusion transports individuals from more favorable locations to less favorable ones in an average sense,  rendering passive diffusion to be selected against. The picture is different, however, if the dispersal is conditional on the local environment. An important class of dispersal strategies consists of ones enabling a population to become perfectly aligned with the heterogeneous resource distribution, thus achieving the so called ideal free distribution \cite{Fretwell}. In this circumstance, it is shown that such a dispersal strategy is selected for, in the sense that it is both an evolutionarily stable strategy and a neighborhood invader strategy. See \cite{Aver2012,Cantrell2010,Cho2013,Korobenko2014} for results on reaction-diffusion models; and \cite{Cantrell2012a,Cantrell2012b,Cantrell2020,Cosner2012} for results in other modeling settings.

The work of Hastings and Dockery et al. has also stimulated substantial mathematical analysis of competition models involving two species. We mention the work of \cite{He2016,Hutson2001,Lam2012,Lou2006} for passive dispersal, and \cite{ALL,Cantrell2007,Chen2008,Chen2012,Lam2014a,Lam2014b} %,Lam2014ni} 
for conditional dispersal. An interesting application concerns the evolution of dispersal in stream populations, which are subject to a uni-directional drift \cite{Lutscher2005,Potapov2014}. It has been shown that in some circumstances, faster dispersal is sometimes selected for \cite{Lou2014,Lou2015}. See also \cite{Hao2020,Lam2015,Lou2019}. We also mention the work \cite{Jiang2019} on the evolution of dispersal in phytoplankton populations, where individuals compete non-locally for sunlight.
%the nonlocal competition model of phytoplankton populations.

Most of the existing results are restricted to the case when the number of species is equal to two. In this case, the theory of monotone dynamical systems \cite{Hsu1996,LamMunther,Smith1995} can be applied to determine the global dynamics of the competition system. Results for three or more competing species are relatively rare \cite{Cantrell1993,Cantrell1997,Dancer1995,Dancer2012,FengRuan,LouMunther,Mazari2019}, 
%Leung1984,
and the question of global dynamics remains an open and challenging problem. In the following, we will address two conjectures of Dockery et al. concerning a model involving $N$ competing species, which are identical except for the passive dispersal rates.

\subsection{Two conjectures of Dockery et al.}

The following Lotka-Volterra model of $N$ competing species, which are subject to passive dispersal, was introduced by Dockery et al. \cite{Dockery1998}. 
\begin{equation}\label{eq:1.1}
\begin{cases}
\partial_t u_i(x,t) = d_i \Delta u_i(x,t) + u_i(x,t) \left[ m(x) - \sum_{j=1}^N u_j(x,t)\right]  \\
\hspace{6cm} \text{ for }x \in \Omega,\, t>0,\, 1 \leq i \leq N. \,\,\,\\
\partial_\nu u_i(x,t) = 0  \hfill \text{ for }x \in \partial\Omega,\, t>0,\, 1 \leq i \leq N.\\
%u_i(x,0) = u_{i,0}(x) &\text{ for }x \in \Omega.
\end{cases}
\end{equation}
These $N$ species are assumed to be identical except for their dispersal rates $0<d_1<...<d_N$.
Here $\Omega$ is a bounded domain in $\mathbb{R}^n$ with smooth boundary $\partial\Omega$, $\Delta = \sum_{j=1}^n \partial_{x_j x_j}$ is the Laplacian operator, $\partial_\nu$ is the outer-normal derivative on $\partial\Omega$. We also assume, {for some $\beta \in (0,1)$ and $\epsilon'>0$, 
\begin{equation}\label{eq:MmM}
m(x) \in C^{\beta+\epsilon'}(\overline\Omega) \,\,\text{ is non-constant, and } \,\, \int_\Omega m\,dx \geq 0,
\end{equation}
where $C^{\beta + \epsilon'}(\overline\Omega)$ is the H\"{o}lder space. The function $m$ and the exponent $0 < \beta <1$ will be fixed throughout this paper.}
In the following we denote 
\begin{equation}\label{eq:equilibria}
    E_i = (0,..., 0,\theta_{d_i},0,...,0)\quad \text{ for }\quad 1 \leq i \leq N, \quad \text{ and }\quad E_0=(0,...,0)
\end{equation} to be the corresponding equilibria of \eqref{eq:1.1}, where for $d>0$
the function
$\theta_d(x)$ denotes the unique positive solution of 
\begin{equation}\label{eq:theta}
    d \Delta \theta_d + \theta_d(m(x) - \theta_d) = 0 \,\,\text{ in }\Omega,\,\text{ and }\,\ \partial_\nu \theta_d = 0\,\, \text{ on }\partial\Omega.
\end{equation}

In case $N=2$, Dockery et al. obtained a complete description of the dynamics of \eqref{eq:1.1} by applying the abstract tools of monotone dynamical systems.
\begin{theorem}[{\cite[Lemmas 3.9 and 4.1]{Dockery1998}}] \label{thm:Dockery}
Suppose $N=2$ and $d_1<d_2$. Then every positive solution of \eqref{eq:1.1} converges to the equilibrium $(\theta_{d_1},0)$. 
% where, for $d>0$ the function
% $\theta_d(x)$ denotes the unique positive solution of 
% \begin{equation}\label{eq:theta}
%     d \Delta \theta_d + \theta_d(m(x) - \theta_d) = 0 \,\,\text{ in }\Omega,\,\, \partial_\nu \theta_d = 0.
% \end{equation}
Furthermore,   a Morse decomposition for {$\textup{Inv}\,K^+$} is given by 
$$
M(1) = \{E_1\}\quad M(2) = \{E_2\},\quad M(3) = \{E_0\}.
$$
Here {$\textup{Inv}\,K^+$} denotes 
 the maximal bounded invariant set of the dynamical system generated in $K^+=\{(u_i)_{i=1}^N \in [C(\overline\Omega)]^N:\, u_i \geq 0\}$ under \eqref{eq:1.1}.
%the maximal compact invariant subsets of the semiflow generated by \eqref{eq:1.1}. 
%Then
\end{theorem}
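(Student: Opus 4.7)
The plan is to exploit that, when $N=2$, the semiflow generated by \eqref{eq:1.1} is strongly order-preserving with respect to the competitive (Kamke) order on $K^+$. Consequently, the global dynamics reduces to (a) classifying the equilibrium set in $K^+$ and (b) carrying out a linearized stability analysis at each equilibrium; the abstract theory of two-species competitive systems then supplies the global convergence and the Morse structure. I would proceed in four steps.

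First I would rule out coexistence equilibria. Suppose $(u_1^*, u_2^*) \gg 0$ is a positive equilibrium and set $w^* = u_1^* + u_2^*$. Each $u_i^*$ is then a positive solution of the linear Neumann problem $-d_i \Delta u_i^* - (m - w^*)u_i^* = 0$, so by the Krein-Rutman theorem $u_i^*$ is the principal eigenfunction of $-d_i \Delta - (m - w^*)$ with principal eigenvalue zero. The key point is that $m - w^*$ must be non-constant: otherwise the equation forces $m - w^* \equiv 0$ (since the principal eigenvalue of $-d_i \Delta$ with Neumann data is zero) and both $u_i^*$ are constants, whence $m$ is constant, contradicting the hypothesis. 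With $m - w^*$ non-constant, the Rayleigh quotient shows that $d \mapsto \mu_1\bigl(-d\Delta - (m - w^*)\bigr)$ is strictly increasing, so the simultaneous identities $\mu_1(d_1) = \mu_1(d_2) = 0$ for $d_1 \neq d_2$ are incompatible.

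Next I would analyse the stability of the boundary equilibria. At $E_1$ the linearization is upper-triangular with diagonal operators $d_1 \Delta + (m - 2\theta_{d_1})$ and $d_2 \Delta + (m - \theta_{d_1})$; the first is stable by the standard theory of the scalar logistic equation, while the second has negative principal eigenvalue by the monotonicity from the previous step applied to $m - \theta_{d_1}$ (non-constant because $\theta_{d_1}$ is non-constant whenever $m$ is). Hence $E_1$ is linearly stable. The analogous computation at $E_2$ yields a positive principal eigenvalue in the $u_1$-direction, so $E_2$ is unstable with unstable manifold entering the interior of $K^+$; and $E_0$ is unstable in both coordinate directions because $\mu_1(-d_i \Delta - m) < 0$ whenever $m$ is non-constant with $\int_\Omega m \geq 0$ (by testing the Rayleigh quotient against the constant function and a small non-constant perturbation).

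Finally, the semiflow admits a compact global attractor in $K^+$, so by the trichotomy theorem of Hsu-Smith-Waltman for strongly monotone two-species competition, the absence of coexistence combined with the instability of $E_2$ forces $\omega(u_1, u_2) = \{E_1\}$ for every positive initial datum. For the Morse decomposition with $M(1) = \{E_1\}$, $M(2) = \{E_2\}$, $M(3) = \{E_0\}$, isolatedness is immediate from hyperbolicity of the three equilibria. The ordering condition that every orbit in $\textup{Inv}\,K^+$ not lying in some $M(i)$ has $\alpha(x) \subset M(i)$ and $\omega(x) \subset M(j)$ with $i > j$ splits into cases: interior orbits connect to $M(1)$ by the convergence just proved, and orbits on the invariant faces $\{u_i = 0\}$ reduce to the scalar logistic equation, giving the admissible connections $E_0 \to E_1$ and $E_0 \to E_2$. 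The main obstacle is the first step: everything rests on the strict monotonicity of the principal eigenvalue in the diffusion rate, which is precisely the Altenberg reduction principle at work.
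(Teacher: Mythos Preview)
The paper does not give its own proof of this theorem: it is quoted verbatim as a result of Dockery et al.\ (the citation \cite[Lemmas 3.9 and 4.1]{Dockery1998} in the theorem header is the entire justification), and the surrounding text only uses it as input for the later arguments. So there is no in-paper proof to compare against; the relevant benchmark is the original argument of Dockery et al.

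Your outline is correct and is essentially the Dockery et al.\ proof. The elimination of coexistence equilibria via strict monotonicity of $d\mapsto\mu_1(d,h)$ for non-constant $h$, the linearized stability computations at $E_0,E_1,E_2$, and the appeal to the Hsu--Smith--Waltman trichotomy for two-species competitive systems are exactly the ingredients in \cite{Dockery1998}. One small point worth tightening in the Morse-decomposition step: for a full interior trajectory you correctly have $\omega=E_1$, but you should also say why the $\alpha$-limit set cannot be $E_1$ (this follows from hyperbolic stability of $E_1$: its unstable manifold is trivial, so no nonconstant full orbit can emanate from it), forcing $\alpha\in\{E_0,E_2\}$ and giving the required ordering $j>i$. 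With that addition the argument is complete.
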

%Here  {$\textup{Inv}\,K^+$} denotes the maximal bounded invariant set of the dynamical system generated in $K^+=\{(u_i)_{i=1}^N \in [C(\overline\Omega)]^N:\, u_i \geq 0\}$ under \eqref{eq:1.1}. 
Roughly speaking, we say that $\{M(1),M(2),M(3)\}$ is a Morse decomposition of {$\textup{Inv}\,K^+$} if {every} bounded trajectory $\gamma(t)$ converges to some equilibrium, and that, if $\gamma(t)$ is defined for $t \in \mathbb{R}$, then $\gamma(\infty) \subset M(i)$ and $\gamma(-\infty) \subset M(j)$ for some $i < j$. The precise definition of a Morse decomposition will be given in Subsection \ref{subsec:1.4}, after some related dynamical system concepts are introduced.

When $N \geq 3$, it is conjectured in \cite{Dockery1998} that the slowest disperser continues to win the competition.

\begin{conjecture}\label{conj:A}
Let $N\geq 1$ and  $0 < d_1<...<d_N$. Then the equilibrium $E_1=(\theta_{d_1},0,...,0)$ is globally asymptotically stable among all positive solutions of \eqref{eq:1.1}. 
\end{conjecture}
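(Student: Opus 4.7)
The plan is to establish that \eqref{eq:1.1} generates a dissipative semiflow on $K^+$ with a compact global attractor, then to construct a Morse decomposition $\{M(1),\ldots,M(N+1)\}$ of $\textup{Inv}\,K^+$ with $M(i)=\{E_i\}$ for $1 \le i \le N$ and $M(N+1)=\{E_0\}$, and finally to show that $E_1$ is the only Morse set that attracts any trajectory starting from a strictly positive initial datum. This mirrors the $N=2$ theorem but must be carried out without the two-species monotone dynamical systems shortcut.

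The first substantive task is to rule out any invariant set in $K^+$ other than the listed equilibria. Dockery et al. already excluded coexistence equilibria with distinct $d_i$, since any such state would force the ratios $u_j^*/u_1^*$ to solve an overdetermined elliptic problem. One must then exclude non-equilibrium $\omega$-limit sets lying in any sub-face $\{u_j = 0:\, j \notin S\}$. Each such face carries the same reaction structure as \eqref{eq:1.1} with fewer species, so an induction on $N$ reduces matters to the interior problem on the full face, where a Lyapunov-type argument adapted from the two-species case should rule out non-trivial recurrence.

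The invasion structure among the $E_i$ is governed by the principal eigenvalue $\lambda(d_j,m-\theta_{d_i})$ of the linearized $u_j$-equation at $E_i$. The monotonicity result central to \cite{Dockery1998} and formalized via Altenberg's reduction principle gives $\lambda(d_j,m-\theta_{d_i}) > 0$ if and only if $d_j < d_i$, so $E_i$ is unstable precisely in those directions $u_j$ with $j < i$. This provides exactly the information needed to order the Morse sets so that any heteroclinic connection from $E_j$ to $E_i$ requires $i < j$. The normalized principal bundle theory enters here to track these linearizations non-autonomously along full heteroclinic orbits and to produce exponential dichotomies that upgrade the local instability at each $E_i$ (for $i \ge 2$) to a global statement about the relative position of stable and unstable manifolds, thereby forcing any interior orbit that accumulates on the boundary chain to descend along it.

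The hard step --- and apparently the reason the present paper settles only for an open subset of $(d_i)_{i=1}^N$ --- is to exclude non-trivial interior $\omega$-limit sets and to show that every positive trajectory is pulled down to $E_1$ rather than trapped on a boundary heteroclinic chain. For $N \ge 3$ the Kamke competitive structure of \eqref{eq:1.1} no longer forces Poincar\'e--Bendixson behavior in the interior, and a priori there could be interior periodic orbits or more elaborate chain-recurrent components. I would attack this via the perturbation-of-Morse-decomposition strategy suggested in the keywords: show that for $(d_i)$ in an open set the Morse ordering above is robust and that the only interior chain connecting boundary Morse sets is the cascade $E_N \to E_{N-1} \to \cdots \to E_1$. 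Extending from an open set of parameters to \emph{every} strictly ordered $(d_i)$, which is what the full conjecture demands, is the crux and appears to require a new idea --- most likely a global Lyapunov-type functional on $K^+$ that is currently unavailable --- beyond the principal bundle and Morse machinery outlined here.
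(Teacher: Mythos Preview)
There is no proof in the paper to compare against: Conjecture~\ref{conj:A} is stated as an \emph{open} conjecture of Dockery et al., and the paper does not claim to resolve it. The paper's actual contribution is Theorem~\ref{thm:main} (openness of $\mathcal{D}$ in the Hausdorff topology) together with Proposition~\ref{prop:1.1}, which together yield Corollary~\ref{cor:B}: the conclusion of Conjecture~\ref{conj:A} holds only for those $(d_i)_{i=1}^N$ lying in a Hausdorff neighborhood of a set already known to be in $\mathcal{D}$ (e.g.\ any doubleton). You recognize this yourself in your final paragraph, so your proposal is not in fact a proof attempt but an accurate diagnosis of why the conjecture is hard.

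Your outline of the intended Morse structure, the eigenvalue monotonicity $d\mapsto \mu_1(d,m-\theta_{\hat d})$ governing the ordering $E_i \to E_j \Rightarrow i>j$, and the role of the normalized principal bundle all match the paper's machinery. But one step in your sketch is circular rather than merely incomplete: you propose to handle the boundary faces $\{u_j=0:\, j\notin S\}$ by ``induction on $N$,'' reducing to the interior problem on each face. Each such face is again an instance of \eqref{eq:1.1} with $|S|<N$ species, so the induction hypothesis you would invoke is precisely Conjecture~\ref{conj:A} (or~\ref{conj:B}) for fewer species, which is open for every $|S|\ge 3$. The paper's perturbative argument sidesteps this by working only near a configuration $(\hat d_k)_{k=1}^K$ already assumed to lie in $\mathcal{D}$, so that the unperturbed semiflow carries a complete Lyapunov function (Section~\ref{sec:2}) which is then transported to the perturbed semiflow via the estimates of Section~\ref{sec:3}. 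Absent such an anchor, neither your induction nor a ``Lyapunov-type argument adapted from the two-species case'' is currently available, and that is exactly the missing idea you identify at the end.
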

Another version of the conjecture, also due to Dockery et al., can be formulated in terms of the concept of Morse Decomposition.
\begin{conjecture}\label{conj:B}
Let $N\geq 1$ and $0 < d_1<...<d_N$. Then a Morse decomposition for {$\textup{Inv}\,K^+$} is given by 
$$
M(i) = \{E_i\}\quad \text{ for }1 \leq i \leq N, \quad \text{ and }\quad M(N+1) = \{E_0\}.
$$
\end{conjecture}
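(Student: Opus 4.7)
A Morse decomposition of $\mathrm{Inv}\,K^+$ requires two ingredients: \emph{(i)} every bounded entire orbit has $\alpha$- and $\omega$-limit sets contained in a single $M(k)$, and \emph{(ii)} if a non-constant entire orbit $\gamma$ satisfies $\omega(\gamma)\subset M(i)$ and $\alpha(\gamma)\subset M(j)$, then $i<j$. My plan is to establish \emph{(ii)} by a linearization-invasibility calculation at each semi-trivial equilibrium, and then to attack \emph{(i)} by attaching a normalized principal bundle to the variational equation along a putative minimal invariant set; if that direct exclusion does not close, I would fall back on a perturbation-of-Morse-decomposition argument anchored at a model configuration.

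For \emph{(ii)} I linearize \eqref{eq:1.1} at $E_i$ in each transverse $u_j$-direction ($j\neq i$), obtaining $\partial_t v=d_j\Delta v+(m-\theta_{d_i})v$ with Neumann data. The principal eigenvalue $\mu(d_j;d_i)$ of $d_j\Delta+(m-\theta_{d_i})$ vanishes at $d_j=d_i$ (with eigenfunction $\theta_{d_i}$) and is strictly decreasing in $d_j$, so $\mu(d_j;d_i)<0$ precisely when $d_j>d_i$; this is the eigenvalue monotonicity used by Dockery et al.\ and revisited from the reduction-principle viewpoint by Altenberg. Consequently $E_i$ admits invasion only by strictly slower dispersers, and $E_0$ is unstable in every direction since $m$ is non-constant with $\int m\geq 0$, which forces every heteroclinic chain to decrease in index. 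Coexistence equilibria are ruled out in the same breath: at any positive equilibrium each active $u_i$ would solve $d_i\Delta u_i+u_i(m-S)=0$ with $S=\sum_j u_j$ common to all indices, so $S=\theta_{d_i}$ simultaneously for every active $i$, which is incompatible with $d_1<\cdots<d_N$.

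The substantive content is \emph{(i)}, namely excluding non-equilibrium recurrent sets (periodic orbits, invariant tori, homoclinic loops) in the non-monotone $N\geq 3$ regime, where the two-species monotone-dynamics argument is unavailable. I would attach the normalized principal bundle of the variational equation along a hypothetical minimal invariant set $\mathcal{M}\subset\mathrm{Inv}\,K^+$ and extract continuous Lyapunov exponents whose signs are inherited from the eigenvalue monotonicity above. Together with the chain-recurrence theorem and the boundary-face structure of $K^+$ (on whose faces the induction hypothesis on $N$ applies, the base case being the result of Dockery et al.\ quoted above), these sign conditions should force every minimal set to lie on a single coordinate axis and thence to coincide with some $E_i$. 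Where this direct reduction fails, I would anchor at a reference configuration $(d_1^\ast,\dots,d_N^\ast)$ for which the Morse decomposition has been verified independently---for instance by collapsing several $d_i^\ast$'s together and exploiting the resulting symmetry, or by sitting near the $N=2$ regime---and invoke a Morse decomposition perturbation theorem: the uniform spectral gaps furnished by the normalized principal bundle give persistence of the decomposition on an open neighborhood of $(d_1^\ast,\dots,d_N^\ast)$ in the Hausdorff topology, matching the open-set conclusion stated in the abstract. The main obstacle to upgrading this local persistence to the full Conjecture \ref{conj:B} is the absence of a global Lyapunov functional for \eqref{eq:1.1}, which is precisely why the conjecture remains open in full generality.
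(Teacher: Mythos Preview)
The statement you are attempting to prove is labeled \emph{Conjecture} in the paper and is \emph{not} proved there; the paper's main result (Theorem~\ref{thm:main}) is precisely your fallback option, namely that the set $\mathcal{D}$ of diffusion-rate configurations for which Conjecture~\ref{conj:B} holds is open in the Hausdorff topology. So there is no ``paper's own proof'' of Conjecture~\ref{conj:B} to compare against, and you correctly identify in your final paragraph why the conjecture remains open.

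Your ingredient \emph{(ii)} and the exclusion of coexistence equilibria are essentially the content of \cite[Lemma 3.9]{Dockery1998} (restated as Lemma~\ref{lem:3.9'} and Lemma~\ref{lem:3.9} here) and are correct. The genuine gap is in your direct attack on \emph{(i)}. Your proposal to ``extract continuous Lyapunov exponents whose signs are inherited from the eigenvalue monotonicity'' along a hypothetical minimal set $\mathcal{M}$ does not go through as stated: the monotonicity $\partial_d \mu_1(d, m-\theta_{d_i})>0$ is an \emph{autonomous} fact about the static potential $m-\theta_{d_i}$, and there is no mechanism offered for transferring this sign to the non-autonomous bundle exponent $H_1(t;d,h)$ when $h(x,t)=m(x)-\sum_j u_j(x,t)$ varies along an arbitrary recurrent orbit far from every $E_i$. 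The paper's Corollary~\ref{cor:A2'} does establish $\inf_t \partial_d H_1(t;d,h)>0$, but only under the hypothesis that $h$ is $C^{\beta,\beta/2}$-close to a fixed non-constant $\hat h(x)$; this is exactly why the paper first uses the perturbation-of-Morse-decomposition machinery (Sections~\ref{sec:2}--\ref{sec:4}) to force the orbit into a small neighborhood of $M(1)$ \emph{before} invoking the bundle argument (Proposition~\ref{prop:2.1}). Without that preliminary localization, the sign of $\partial_d H_1$ along a general minimal set is uncontrolled, and your reduction of $\mathcal{M}$ to a coordinate axis does not follow.

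In short: your fallback \emph{is} the paper, and your direct route founders at precisely the step the paper cannot bypass either.
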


Define $\mathcal{D}$ to be the collection of all finite sets of positive real numbers such that Conjecture \ref{conj:B} holds; i.e.
$$
\mathcal{D} = \cup_{{N=1}}^\infty \{(d_i)_{i=1}^N\,:\, \text{Conjecture \ref{conj:B} holds.}\}.
$$
By the result of Dockery et al., the family $\mathcal{D}$ contains all singleton and doubleton sets of positive numbers. Can we say more about $\mathcal{D}$?

We first observe that Conjecture \ref{conj:B} implies Conjecture \ref{conj:A}, for any $N$.
\begin{proposition}\label{prop:1.1}
Let $0<d_1<d_2<...<d_N$ be given. If $(d_i)_{i=1}^N \in \mathcal{D}$, then 
every interior trajectory of \eqref{eq:1.1} converges to $E_1$. 
\end{proposition}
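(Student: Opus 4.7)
My plan is to combine the Morse-decomposition hypothesis $(d_i)_{i=1}^N\in\mathcal{D}$ with a principal-eigenvalue invasibility argument for the slowest disperser. First, I would observe that the semiflow on $K^+$ generated by \eqref{eq:1.1} is dissipative (by componentwise comparison with a scalar logistic PDE having zero-order coefficient $\|m\|_\infty$), so every nonnegative trajectory is bounded; combined with the hypothesis, the Morse decomposition then forces the $\omega$-limit of every trajectory to be a single equilibrium $E_j$ with $j\in\{0,\ldots,N\}$. The proposition thus reduces to excluding $j\neq 1$ for interior trajectories.

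The key ingredient is the sign of a principal eigenvalue. Write $\lambda_1(d,q)$ for the principal eigenvalue of $-d\Delta - q(x)$ on $\Omega$ under Neumann BC. The existence of $\theta_{d_1}>0$ yields $\lambda_1(d_1,m)<0$, and for each $j\in\{2,\ldots,N\}$ I would deduce $\lambda_1(d_1,m-\theta_{d_j})<0$ from the two-species theorem in the excerpt via the block-triangular structure of the linearization at $E_j$ together with local stability of the scalar logistic equilibrium $\theta_{d_j}$. With these signs in hand, suppose for contradiction that an interior trajectory converges in $[C(\overline\Omega)]^N$ to $E_j$ with $j\ne 1$, and set $Q=m$ if $j=0$, $Q=m-\theta_{d_j}$ otherwise, so that $\lambda_1(d_1,Q)<0$. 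Continuity of the principal eigenvalue in the zero-order term provides $\varepsilon>0$ with $\lambda_1(d_1,Q-\varepsilon)<0$, and convergence yields $m(x) - \sum_i u_i(x,t) \geq Q(x) - \varepsilon$ uniformly on $\Omega$ for all $t\geq t_0$, so that
\[
\partial_t u_1 \;\geq\; d_1\Delta u_1 + (Q-\varepsilon)\,u_1 \quad\text{for } t\geq t_0.
\]
The strong maximum principle gives $u_1(\cdot,t_0)>0$ on $\overline\Omega$, hence $u_1(\cdot,t_0)\geq c\,\varphi$ for some $c>0$ and a positive eigenfunction $\varphi$ of $-d_1\Delta-(Q-\varepsilon)$; parabolic comparison then yields $u_1(\cdot,t)\geq c\,e^{-\lambda_1(d_1,Q-\varepsilon)(t-t_0)}\varphi\to\infty$, contradicting $u_1\to 0$.

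I do not anticipate a substantive obstacle; all the tools are classical. The only slightly delicate step is the identification $\lambda_1(d_1,m-\theta_{d_j})<0$ with the two-species instability assertion: since the linearization of \eqref{eq:1.1} at $E_j$ is block-triangular with diagonal blocks $d_1\Delta+(m-\theta_{d_j})$ and $d_j\Delta+(m-2\theta_{d_j})$, and the second block is strictly stable by the local stability of $\theta_{d_j}$ as a scalar logistic equilibrium, the instability of $(0,\theta_{d_j})$ in the $(d_1,d_j)$ two-species subsystem asserted by the quoted theorem must originate from the first block, which is precisely $\lambda_1(d_1,m-\theta_{d_j})<0$.
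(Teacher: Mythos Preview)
Your proposal is correct and follows essentially the same approach as the paper: the Morse-decomposition hypothesis forces every interior trajectory to converge to some $E_j$, and then $j\neq 1$ is excluded by an invasibility argument for the slowest species (this is exactly the content of Lemma~\ref{lem:3.9'}, which the paper cites as a black box). The only minor difference is that the paper (see Lemma~\ref{lem:3.9}) obtains $\lambda_1(d_1,m-\theta_{d_j})<0$ directly from the strict monotonicity of the principal eigenvalue in the diffusion coefficient together with $\lambda_1(d_j,m-\theta_{d_j})=0$, which is shorter than your block-triangular deduction from the two-species instability.
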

Moreover, if $N=3$, the two conjectures are equivalent.
\begin{proposition}\label{prop:1.2}
Let $0<d_1<d_2<d_3$ be given. Then $(d_i)_{i=1}^3 \in \mathcal{D}$ if and only if  
every interior trajectory of \eqref{eq:1.1} converges to $E_1$. 
\end{proposition}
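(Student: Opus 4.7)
The ``only if'' direction is immediate from Proposition \ref{prop:1.1}, so assume that every interior trajectory of \eqref{eq:1.1} with $N=3$ converges to $E_1$. The plan is to verify Conjecture \ref{conj:B} by checking (i) every bounded trajectory converges to one of $\{E_0,E_1,E_2,E_3\}$, and (ii) for every bounded full orbit, both $\alpha$ and $\omega$ limits are single equilibria respecting the ordering $\{E_1\}<\{E_2\}<\{E_3\}<\{E_0\}$.

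For (i), each closed two-dimensional face $F_{ij}:=\{u_k\equiv 0\}$ (with $k\in\{1,2,3\}\setminus\{i,j\}$) and each coordinate axis $A_\ell:=\{u_k\equiv 0\text{ for }k\neq \ell\}$ is forward invariant. The system on $F_{ij}$ (with $d_i<d_j$) reduces to the two-species Dockery model, so Theorem 1.1 gives convergence to some equilibrium in $\{E_0, E_i, E_j\}$; on $A_\ell$ one has the scalar logistic equation, converging to $E_0$ or $E_\ell$. Interior trajectories converge to $E_1$ by hypothesis, so (i) holds.

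Now let $\gamma : \mathbb{R} \to K^+$ be a bounded full orbit. If $\gamma$ lies in some face or axis, Theorem 1.1 (or the scalar logistic equation) on that component gives (ii) directly, and a quick check of the three faces shows their Morse orderings embed into the global ordering $\{E_1\}<\{E_2\}<\{E_3\}<\{E_0\}$. So suppose $\gamma$ is interior; then $\omega(\gamma)=\{E_1\}$. For $\alpha(\gamma)$, observe that $E_1$ is linearly (hence locally asymptotically) stable in $K^+$: the invasion analysis of \cite{Dockery1998} shows that neither species $2$ nor species $3$ can invade $E_1$, since $d_2, d_3 > d_1$. If some $p\in\alpha(\gamma)$ had all coordinates positive, the forward orbit from $p$ would converge to $E_1$, giving $E_1\in \alpha(\gamma)$; then Lyapunov stability at $E_1$ forces $\gamma(0)=E_1$, contradicting that $\gamma$ is in the interior. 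Applying the same forward-convergence-to-$E_1$ reasoning for points of $\alpha(\gamma)$ in $\mathrm{int}(F_{12})$, $\mathrm{int}(F_{13})$, or $\mathrm{int}(A_1)$ yields $\alpha(\gamma) \subset F_{23}$.

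The final step identifies $\alpha(\gamma)\subset F_{23}$ as a single equilibrium. Theorem 1.1 on $F_{23}$ (with $d_2<d_3$) gives the Morse decomposition $\{E_2\}<\{E_3\}<\{E_0\}$; in particular the chain-recurrent set on $F_{23}$ is $\{E_0,E_2,E_3\}$ and there are no heteroclinic cycles. Since $\alpha(\gamma)$ is a nonempty, compact, connected, forward-invariant and internally chain-transitive set---a standard property of $\alpha$-limit sets of precompact bi-infinite trajectories of a continuous semiflow, alternatively obtainable here by a Butler--McGehee exclusion using the stability types of $E_0,E_2,E_3$ within $F_{23}$---it must coincide with a single chain-transitive component, i.e., with one of $\{E_0\},\{E_2\},\{E_3\}$; these are respectively $M(4),M(2),M(3)$. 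Thus $\omega(\gamma)\subset M(1)$ and $\alpha(\gamma)\subset M(j)$ with $1<j$, verifying the Morse ordering. The main obstacle is this last step, which needs dynamical-systems machinery beyond the two-species result; the other reductions combine Theorem 1.1 with the local stability of $E_1$.
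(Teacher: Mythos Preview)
Your proof is correct and arrives at the same conclusion by essentially the same mechanism as the paper: reduce boundary orbits to lower-dimensional subsystems via the two-species theorem, then use that $\alpha$- and $\omega$-limit sets are internally chain transitive and must therefore coincide with a single equilibrium once one knows there are no cycles. The paper invokes \cite[Theorem~3.2]{Zhao} directly on the full three-species system after observing that any heteroclinic $E_i\to E_j$ lies in a subsystem whose positive solutions converge to the slowest diffuser, forcing the Morse index to drop and ruling out cycles.

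The one substantive difference is your treatment of $\alpha(\gamma)$ for an interior full orbit: you first invoke the linear (hence local asymptotic) stability of $E_1$ to show $\alpha(\gamma)\cap\{u_1>0\}=\emptyset$, confining $\alpha(\gamma)$ to the face $F_{23}$, and only then apply the chain-transitivity argument on that face using the known two-species Morse decomposition. The paper skips this localization and applies the acyclicity/chain-transitivity result globally. Your route is a bit longer but has the virtue of making explicit why $E_1$ cannot appear in the $\alpha$-limit set; the paper's route is more economical and does not need to verify the stability of $E_1$ separately, since acyclicity alone suffices for Zhao's theorem.
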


The goal of this paper is to prove the following stability result, which provides 
a step towards an affirmative answer to Conjectures \ref{conj:A} and \ref{conj:B}. 
\begin{theorem}\label{thm:main}
The collection $\mathcal{D}$ is open in the space of finite sets relative to the Hausdorff metric. % $\textup{dist}_H$.
\end{theorem}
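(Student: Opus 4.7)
\medskip

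\noindent\textbf{Proof proposal.} The plan is to establish openness of $\mathcal{D}$ by a perturbation argument built around the persistence of Morse decompositions. Fix $\{d_1,\dots,d_N\}\in\mathcal{D}$ with $0<d_1<\dots<d_N$, set $\delta_0 = \tfrac{1}{3}\min_i(d_{i+1}-d_i)$, and consider any finite set $\{\tilde d_1,\dots,\tilde d_{\tilde N}\}$ within Hausdorff distance $\delta<\delta_0$ of it. Because the $d_i$ are distinct, for $\delta$ small enough one necessarily has $\tilde N = N$ and, after relabeling, $|d_i-\tilde d_i|<\delta$; thus it suffices to show that Conjecture \ref{conj:B} is stable under a small perturbation of the ordered tuple $(d_i)_{i=1}^N$.

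First, I would record continuous dependence: the semi-trivial equilibria $E_i$ depend continuously on $d_i$, because $\theta_d$ depends smoothly on $d>0$ by the implicit function theorem applied to \eqref{eq:theta} (the linearization at $\theta_d$ is invertible). Moreover, the semiflow $\Phi^t_{(d_i)}$ generated on $K^+$ by \eqref{eq:1.1} depends continuously on $(d_i)$, uniformly on bounded sets of initial data and on compact time intervals, by standard analytic semigroup perturbation theory for uniformly parabolic systems with smoothly varying principal part. The existence of a global attractor, uniform in small parameter perturbations, follows from the a priori $L^\infty$ bound $\sum_j u_j \le \max\{\|m\|_\infty,\,\max_j\|u_j(\cdot,0)\|_\infty\}$.

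Next, I would apply Rybakowski's extension of Conley theory to semiflows on Banach spaces enjoying the admissibility (smoothing) properties of parabolic systems. Each Morse set $M(i)=\{E_i\}$ has an isolating neighborhood $U_i\subset K^+$; the upper semicontinuity of maximal invariant sets gives, for all sufficiently small $\delta$, that the set $\tilde M(i):=\mathrm{Inv}_{\tilde\Phi}(\overline{U_i})$ of the perturbed semiflow is isolated, nonempty, and contained in a prescribed neighborhood of $E_i$. The ordering of the Morse sets is inherited from the original one by continuity of the semiflow: any bounded trajectory of $\tilde\Phi$ asymptotic to $\tilde M(j)$ in the past and $\tilde M(i)$ in the future forces the analogous chain to exist for $\Phi$ by a standard limit argument, so $i<j$ is preserved. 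Together these observations yield that $\{\tilde M(1),\dots,\tilde M(N+1)\}$ is a Morse decomposition for the perturbed $\mathrm{Inv}\,K^+$.

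The remaining and main obstacle is to rule out that any $\tilde M(i)$ acquires new internal dynamics, i.e.\ to prove $\tilde M(i)=\{\tilde E_i\}$. Abstract persistence gives $\tilde M(i)\subset U_i$ but not that it is a single point; a priori $\tilde M(i)$ could harbor a homoclinic loop or a larger connected invariant set produced by a near-degenerate eigenvalue of the linearization at $E_i$. This is where I would invoke the normalized principal bundle theory for the linearized parabolic operator at each $E_i$: it furnishes a dominant Floquet-type subbundle whose associated principal eigenvalue depends continuously on $(d_j)$ and is uniformly bounded away from zero for all $i\ne 1$ (unstable) and negative for $i=1$ (stable along the remaining directions). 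Consequently $\tilde E_i$ remains hyperbolic and the cone/trapping construction at $E_i$ persists, forbidding any bounded orbit to lie entirely in a sufficiently small $U_i$ other than $\tilde E_i$ itself. Shrinking $U_i$ so that the perturbed equilibrium $\tilde E_i$ is the unique equilibrium in $\overline{U_i}$ and combining with hyperbolicity then gives $\tilde M(i)=\{\tilde E_i\}$, completing the argument. The delicate point throughout will be obtaining the hyperbolicity estimates \emph{uniformly} in $(d_j)$ on a neighborhood of the given tuple, which is precisely what the normalized principal bundle formalism is designed to deliver.
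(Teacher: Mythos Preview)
Your proposal contains a fundamental error in its first paragraph: the assertion that for $\delta$ small one ``necessarily has $\tilde N = N$'' is false. The Hausdorff distance between $\{d_1,d_2\}$ and $\{d_1-\delta,\,d_1+\delta,\,d_2-\delta,\,d_2+\delta\}$ is $\delta$, yet the second set has twice as many elements. The space of finite subsets of $\mathbb{R}_+$ under the Hausdorff metric is not locally cardinality-preserving, and this is exactly the interesting content of the theorem (see Corollary~\ref{cor:B}, which extracts conclusions for arbitrarily many species from the two-species case of Dockery et al.). Your reduction to ``small perturbation of the ordered tuple $(d_i)_{i=1}^N$'' therefore discards the main case.

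When the perturbed set has $N>K$ elements, the perturbed system \eqref{eq:Pep} has $N+1$ equilibria while the unperturbed system \eqref{eq:p0} has only $K+1$; one cannot match up Morse sets and invoke Rybakowski-type persistence, since the two decompositions have different cardinalities. The paper instead partitions $\{1,\dots,N\}$ into clusters $I_k$ near each $\hat d_k$ and shows (Sections~\ref{sec:2}--\ref{sec:4}) that the aggregate variables $U_k=\sum_{i\in I_k}u_i$ approximately obey the $K$-species dynamics, so that every interior trajectory of the perturbed flow is eventually trapped near $(\theta_{\hat d_1},0,\dots,0)$ at the level of aggregates. The normalized principal bundle is then used not for hyperbolicity at equilibria (those are autonomous, and ordinary principal eigenvalues would suffice), but for the genuinely non-autonomous problem of comparing the exponential growth of $u_i$ versus $u_j$ for $i,j$ in the \emph{same} cluster $I_1$, along the trajectory, where the potential $h(x,t)=m(x)-\sum_\ell u_\ell(x,t)$ depends on time. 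The smooth dependence and strict monotonicity in $d$ of $H_1(t;d,h)$ (Corollary~\ref{cor:A2'}) yield $u_j/u_1\to 0$ exponentially for every $j>1$, forcing convergence to $E_1$; the Morse decomposition is then assembled at the end (Section~\ref{sec:6}) from this global convergence statement rather than by perturbing the old one.
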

{We recall that the Hausdorff metric is given by
$$
\textup{dist}_H (A,B) = \max\left\{ \sup_{x \in A} \Inf_{y \in B} |x-y|,\,\,\, \sup_{y \in B} \Inf_{x \in A} |x-y| \right\}
$$
for any two non-empty subsets $A, B$ of  $\mathbb{R}$. In particular, the collection of finite subsets of $\mathbb{R}_+$ forms a metric space under the Hausdorff metric. }

As a corollary, we obtain some global stability results for \eqref{eq:1.1} with no restriction on the number of species $N$. 
\begin{corollary}\label{cor:B}
Given $0<\hat{d}_1<\hat{d}_2$, there exists $\ep>0$ such that for any integer $N \geq 3$ and any $(d_i)_{i=1}^N$ such that $0<d_1<d_2<...<d_N$, {if
$$
(d_i)_{i=1}^N \subseteq (\hat{d}_1 - \ep, \hat{d}_1+\ep) \cup (\hat{d}_2-\ep,\hat{d}_2+\ep), % \quad \text{ for }1 \leq i \leq N,
$$
$($i.e. each $d_i$ is close to either $\hat{d}_1$ or $\hat{d}_2)$}  
then for the problem \eqref{eq:1.1} of $N$ species with diffusion rates $(d_i)_{i=1}^N$, every positive solution converges to the equilibrium $E_1=(\theta_{d_1},0,...,0)$ as $t \to \infty$.  %Here $\text{dist}_H$ is the Hausdorff metric.
\end{corollary}
% \begin{remark}
% The above corollary includes the case that $(d_i)_{i=1}^N$ is contained in the $\ep$-neighborhood of $\hat{d}_1$, i.e. $|d_i - \hat{d}_1|<\ep$ for all $1 \leq i \leq N$.
% \end{remark}
\begin{proof}
{Given $0 < \hat{d}_1<\hat{d}_2$, Theorem \ref{thm:Dockery} says that the sets 
$$
A_1 = \{\hat{d}_1\},\quad A_2 = \{\hat{d}_2\}, \quad \text{ and }\quad A_3 = \{\hat{d}_1,\hat{d}_2\}
$$
belong to $\mathcal{D}$. Moreover, Theorem \ref{thm:main} says that they
are interior points of the collection $\mathcal{D}$ in the Hausdorff sense. Hence, there exists $\ep$ so that any sets within $\ep$ distance from any one of $A_i$ belongs to $\mathcal{D}$. By assumption of the corollary, the set $(d_i)_{i=1}^N$ is within $\ep$ distance from at least one $A_i$, so it belongs to $\mathcal{D}$. Finally, it follows from 
Proposition \ref{prop:1.1} that for the problem \eqref{eq:1.1} of $N$-species with diffusion rates $(d_i)_{i=1}^N$, the equilibrium solution $E_1$ attracts all positive solutions.}
\end{proof}

Finally, we also mention that the dynamics of an arbitrary number of competing species was considered in
the paper \cite{Cantrell2012a} in the context of patch models, which are  discrete-in-space versions of \eqref{eq:1.1}. When at least one of the patches is a sink (which is equivalent to $m(x)$ changing sign in the reaction-diffusion context), it was shown that the zero disperser can competitively exclude all other species, by the construction a Lyapunov function.

\subsection{Definitions}\label{subsec:1.4}

{For $L \in \mathbb{N}$, let $X_L=[C(\overline\Omega)]^L$ be the Banach space with norm 
$$\|u\| = \max\limits_{1 \leq i \leq L} \sup\limits_{x\in\Omega} |u_i|,$$
and let $K_L^+$ be the cone of non-negative functions in $X_L$. For simplicity, we will sometimes suppress the subscript $L$ and simply write $X$ and $K^+$ when it does not cause confusion.}

The Neumann Laplacian operator $-\Delta$ is a sectorial operator with domain 
$$
D(-\Delta)=\bigcap\limits_{r>1}\{u \in W^{2,r}(\Omega):\, \Delta u \in C(\overline\Omega),\,\text{ and }\, \partial_\nu u = 0 \,\,\text{ on }\partial\Omega\};
$$
and we denote the fractional power of $-\Delta$ by $(-\Delta)^{\xi}$ for some $0 < \xi < 1$ (see \cite[Ch. 2]{Lunardi}).
\begin{comment}
%
%
% \begin{definition}
% \begin{itemize}
%     \item $\|u\| = \sup_{x \in \Omega} u$ for $u_0 \in C(\overline\Omega)$.
%     \item $X=[C(\overline\Omega)]^N$, 
%     \item $K^+=\{u \in X:\, u_i\geq 0 \,\,\text{ in }\Omega\}$,
%     \item $\mathcal{D}=\cap_{r>1}\{u \in W^{2,r}(\Omega):\, \partial_\nu u = 0 \,\,\text{ on }\partial\Omega\}$ is the domain of the Neumann Laplacian $\Delta$ as a sectorial operator.
%     \item $X^{1/2}=D(\Delta^{1/2})$ is the fractional (complex interpolation) space between $X$ and $\mathcal{D}$, which is the domain of the fractional power $\Delta^{1/2}$ of $\Delta$. \cite[Ch. 2]{Lunardi}
%     %\item We say that $u$ is an invariant solution of \eqref{eq:1.1} if it is defined for all $-\infty < t < \infty$ 
% \end{itemize}
% \end{definition}
\end{comment}
It is a standard fact that the reaction-diffusion system \eqref{eq:1.1} generates a semiflow in $X$, which we will denote here by $\Psi: [0,\infty)\times X$, i.e., for the solution $u(x,t)$ of \eqref{eq:1.1} it holds that
$$
u(\cdot,t+t_0) = \Psi(t, u(\cdot,t_0)) \quad \text{ for }t , t_0  \geq 0.
$$

We say that a function $\gamma:\mathbb{R} \to X$ is a full trajectory if 
$$
\gamma(t + t_0) = \Psi(t, \gamma(t_0)) \quad \text{ for all }t \geq 0 \text{ and } t_0 \in \mathbb{R}.
$$

A subset $A$ of $X$ is an invariant set if every $a \in A$ lies on a full trajectory $\gamma(t)$ such that $\{\gamma(t): t\in \mathbb{R} \} \subset A$. 
Let {$\textup{Inv}\,K^+$} denote the maximal bounded invariant set in $K^+$ under \eqref{eq:1.1}. It is not difficult to see that {{$\textup{Inv}\,K^+$}\,} is compact, and attracts every trajectory in $K^+$. 

Recall also that the $\omega$- and $\alpha$-limit sets of a point $u_0 \in K^+$ are given by 
$$
\begin{cases}
\omega(u_0)= \{\tilde{u} \in X: \, {\Psi}(t_j,u_0) \to \tilde u\quad \text{ for some }  t_j \to \infty.\},\\
\alpha(u_0)= \{\tilde{u} \in X: \, {\Psi}(t_j,u_0) \to \tilde u\quad \text{ for some }  t_j \to -\infty.\},
\end{cases}
$$
where the latter is well-defined if and only if $u$ lies on a full trajectory.

Next, we define the concept of Morse decomposition, which is relevant in considering the global dynamics of \eqref{eq:1.1}. We say that a finite and ordered collection of disjoint compact invariant subsets of {$\textup{Inv}\,K^+$}, 
$$
\{M(i) \subset \text{Inv}\,K^+\,:\, 1\leq i \leq m\},
$$
is a Morse decomposition if, for every $u_0 \in K^+ \setminus \cup_{i=1}^m M(i)$ with bounded trajectory, there exists $i$ with $1\leq i \leq m$ such that $\omega(u_0) \subset M(i)$, and if $u$ lies on a full trajectory, then there exists $j$ such that $i < j \leq m$ and $\alpha(u_0) \subset M(j).$

\subsection{Proofs of Propositions \ref{prop:1.1} and \ref{prop:1.2}}
First, we recall the statement of \cite[Lemma 3.9]{Dockery1998}. (See also Lemma \ref{lem:3.9} in this article.)
\begin{lemma}\label{lem:3.9'}
Fix $0<d_1<...<d_N$. 
%There exists $\delta>0$ such that for any $\ep \in (0,\hat{d}_1/2)$, and 
For any $u_0 \in \textup{Int}\,K^+$, if the trajectory $\Psi(t,u_0)$ converges to an equilibrium, i.e. 
$$
\lim_{t \to \infty} \Psi(t,u_0) = E_i \quad \text{ for some }i\in \{0,1,2,3,...,N\},
$$
then necessarily $i=1$; i.e. $\Psi(t,u_0) \to E_1$. Here $E_i$ is defined in \eqref{eq:equilibria}.
\end{lemma}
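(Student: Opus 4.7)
The plan is to show that for any interior initial datum $u_0 \in \textup{Int}\,K^+$, the first species' density $u_1(\cdot,t)$ cannot decay to zero; since the hypothesis says the trajectory converges to some $E_i$ and every $i \neq 1$ forces $u_1 \to 0$, this suffices to conclude $i=1$. The two scenarios to exclude are convergence to the trivial equilibrium $E_0$ and convergence to a semi-trivial equilibrium $E_i$ with $i \geq 2$. In both cases the strategy is the same: linearize the $u_1$-equation around the target, show that the associated Neumann eigenvalue problem has strictly positive principal eigenvalue, and then use parabolic comparison to drive $u_1$ away from $0$.

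To rule out $E_0$, let $\lambda_1(d)$ denote the principal eigenvalue of $d\Delta + m$ with Neumann boundary conditions. The Rayleigh-quotient representation shows that $\lambda_1(d)$ is strictly decreasing in $d$ whenever $m$ is non-constant, with limit $|\Omega|^{-1}\int_\Omega m \geq 0$ as $d \to \infty$; hence $\lambda_1(d_1) > 0$. If $\Psi(t,u_0)\to E_0$, then for any $\epsilon>0$ and all sufficiently large $t$ one has $\sum_j u_j(\cdot,t) < \epsilon$, so
$$\partial_t u_1 \;\geq\; d_1 \Delta u_1 + (m-\epsilon)\,u_1, \qquad \partial_\nu u_1 = 0.$$
Since $u_1(\cdot,t_0) > 0$ by the strong maximum principle, comparison against a multiple of $e^{(\lambda_1(d_1)-\epsilon)t}\phi_\epsilon(x)$, where $\phi_\epsilon>0$ is the principal eigenfunction of $d_1\Delta + (m-\epsilon)$, gives unbounded growth for $\epsilon$ sufficiently small, contradicting $u_1 \to 0$.

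To rule out $E_i$ with $i \geq 2$, the relevant linearization involves the operator $L = d_1\Delta + (m-\theta_{d_i})$. I would show that its principal eigenvalue $\mu_1$ is strictly positive whenever $d_1 < d_i$ by choosing $\theta_{d_i}$ itself as a test function in the Rayleigh quotient and using the identity
$$\int_\Omega (m-\theta_{d_i})\,\theta_{d_i}^2\,dx \;=\; d_i \int_\Omega |\nabla \theta_{d_i}|^2\,dx,$$
which follows from multiplying \eqref{eq:theta} by $\theta_{d_i}$ and integrating by parts. This yields
$$\mu_1 \;\geq\; \frac{(d_i-d_1)\int_\Omega|\nabla\theta_{d_i}|^2\,dx}{\int_\Omega \theta_{d_i}^2\,dx} \;>\; 0,$$
the strict positivity coming from the fact that $\theta_{d_i}$ is non-constant (as $m$ is). If $\Psi(t,u_0) \to E_i$, then for $t$ large $\|u_i - \theta_{d_i}\|_\infty + \sum_{j \neq i} \|u_j\|_\infty < \epsilon$, so $\partial_t u_1 \geq d_1\Delta u_1 + (m-\theta_{d_i}-C\epsilon)\,u_1$, and the same comparison against the positive principal eigenfunction of $L$ forces $u_1$ to grow, contradicting $u_1 \to 0$.

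The main technical point is the two eigenvalue positivity assertions; once those are in hand, the comparison step is routine parabolic machinery. Both reduce to short Rayleigh-quotient computations, so I do not anticipate serious obstacles: the substance of the lemma is really the biological fact that the slowest disperser enjoys a linear invasion advantage both against a population-free state (under $\int m \geq 0$) and against any semi-trivial equilibrium sustained by a faster-dispersing competitor.
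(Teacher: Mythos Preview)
Your proposal is correct and follows essentially the same approach as the paper. The paper does not prove Lemma~\ref{lem:3.9'} directly (it is cited from \cite{Dockery1998}), but the argument you give matches the proof of the closely related Lemma~\ref{lem:3.9}: linearize the $u_1$-equation about the candidate limit, show the relevant principal eigenvalue has the correct sign, and conclude by parabolic comparison that $u_1$ cannot decay.

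The only differences are in how the eigenvalue signs are verified, and they are cosmetic. For $E_i$ with $i\geq 2$, you use $\theta_{d_i}$ as a test function in the Rayleigh quotient to obtain the explicit lower bound $(d_i-d_1)\int_\Omega|\nabla\theta_{d_i}|^2/\int_\Omega\theta_{d_i}^2$, whereas the paper (in the proof of Lemma~\ref{lem:3.9}) invokes the abstract monotonicity of the principal eigenvalue in $d$ together with $\mu_1(d_i,m-\theta_{d_i})=0$. For $E_0$, you argue via monotonicity of $\lambda_1(d)$ and its limit $|\Omega|^{-1}\int_\Omega m$ as $d\to\infty$, whereas the paper divides the eigenvalue equation by the eigenfunction $\psi$ and integrates to obtain \eqref{eq:hhhold}. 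Your test-function computation for $E_i$ has the minor advantage of giving a quantitative gap; otherwise the two routes are interchangeable.
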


\begin{proof}[Proof of Proposition \ref{prop:1.1}]
Suppose $({d}_i)_{i=1}^N \in \mathcal{D}$, then the system \eqref{eq:1.1} admits a Morse decomposition where the Morse sets consist of the $(N+1)$ equilbria. Hence, every internal trajectory converges to an equilibrium $E_i$ (see \eqref{eq:equilibria}). By Lemma \ref{lem:3.9'}, it can only converge to $E_1$.  
\end{proof}
\begin{proof}[Proof of Proposition \ref{prop:1.2}]
It suffices to show the converse. Suppose $0<d_1<d_2<d_3$ are given such that all interior trajectories of \eqref{eq:1.1} converge to $E_1$. We need to show that 
$$
M(1)=\{E_1\},\quad M(2) = \{E_2\}, \quad M(3) = \{E_3\}, \quad M(4) = \{(0,0,0)\}
$$
is a Morse decomposition of the semiflow. By \cite[Theorem 3.2 and Remark 4.6]{Hirsch2001}, it suffices to show that $\{M(j)\}_{j=1}^4$ forms an acyclic covering of $\displaystyle \cup_{u_0 \in K^+} \omega(u_0)$.

First we show that $\cup_{i=1}^4 M(i)=\{E_1,E_2,E_3,E_0\}$ is a covering, namely, $\cup_{u_0 \in K^+} \omega(u_0) \subset \{E_1,E_2,E_3,E_0\}.$ For the trajectories starting at $u_0 \in K^+ \setminus (\textup{Int}\,K^+)$, by strong maximum principle, it either enters the $\textup{Int}\,K^+$ for all $t>0$, or there is at least one component that is identically zero for all $t>0$. In the first case, the trajectory also converges to $E_1$. In the second case, the system reduces to the two-species case, so that {Theorem \ref{thm:Dockery} applies and the solution converges to $E_{i_0}$,  where $1 \leq i_0\leq 3$ is the smallest integer such that the $i_0$-th} component of $u_0$ is non-zero. 

Next, we show that $\{M(j)\}_{j=1}^4$ is acyclic, i.e. there is no cycle of fixed points. Indeed, if $E_i$ is chained to $E_j$ (i.e. there is a full trajectory connecting from $E_i$ to $E_j$), then 
the trajectory is a positive solution of either the full three-species system, or one of the two-species subsystems. In either case, we have $i > j$. This shows acyclicity. It therefore follows from {\cite[Theorem 3.2 and Remark 4.6]{Hirsch2001}} that any compact internally chain transitive set is an equilibrium point. Since any omega (resp. alpha) limit set is internally chain transitive, it can only be one of the $E_i$'s. The proof of the proposition is completed.
\end{proof}

\subsection{Setting up the proof of Theorem \ref{thm:main}}\label{subsec:1.6}

Suppose {$N_0 \geq 1$} and a finite increasing sequence $(\hat{d}_k)_{k=1}^{N_0} \in \mathcal{D}$ are given. Consider, for a small $\ep \in (0, \hat{d}_1/2)$, an arbitrary $N \geq N_0$ and arbitrary increasing sequence $(d_i)_{i=1}^N$ such that \begin{equation}\label{eq:haus}
\textup{dist}_H((d_i)_{i=1}^N,(\hat{d}_k)_{k=1}^{N_0})<\ep.
\end{equation}
In other words,
\begin{equation}\label{eq:pert}
{0<\frac{\hat{d}_1}{2}<d_1<...<d_N} \quad \text{ and }\quad (d_i)_{i \in I_k} \subset ( \hat{d}_k-{{\ep}}, \hat{d}_k+{{\ep}}) %\quad \text{ for }i \in I_k
\end{equation}
for some partition $\{I_k\}_{k=1}^{N_0}$ of $\{1,2,\dots,N\}$. Note that $I_k$ is non-empty for all $k$, due to \eqref{eq:haus}. We introduce three closely related dynamical systems.

Let $\Phi:[0,\infty) \times {K^+_{N_0} \to K^+_{N_0}}$ be the semiflow operator generated by  the unperturbed problem of ${N_0}$ species:
\begin{equation}\tag{$\hat{P}_0$}\label{eq:p0}
\begin{cases}
\partial_t \hat{U}_k(x,t) = \hat{d}_k \Delta \hat{U}_k(x,t) + \hat{U}_k(x,t) \left[ m(x) - \sum_{j=1}^{N_0} \hat{U}_j(x,t)\right] &\text{ for }1 \leq k \leq {N_0}\\ %&\text{ for }x \in \Omega,\, t>0,\\
\partial_\nu \hat{U}_k(x,t) = 0 &\text{ for }1 \leq k \leq {N_0}%&\text{ for }x \in \partial\Omega,\, t>0,\\
%u_i(x,0) = u_{i,0}(x) &\text{ for }x \in \Omega.
\end{cases}
\end{equation}
Let $\varphi:[0,\infty)\times {K^+_N \to K^+_N}$ be the semiflow operator generated by the unperturbed problem of $N$ species (with repeated diffusion rates):
\begin{equation}\tag{$P_0$}\label{eq:P0}
\begin{cases}
\partial_t u_i(x,t) = \hat{d}_k \Delta u_i(x,t) + u_i(x,t) \left[ m(x) - \sum_{j=1}^N u_j(x,t)\right] &\text{ for }i \in I_k,\, 1 \leq k \leq {N_0},\\ 
\partial_\nu u_i(x,t) = 0 &\text{ for }i \in I_k,\, 1 \leq k \leq {N_0}.
\end{cases}
\end{equation}
Let $\varphi_\ep:[0,\infty)\times {K^+_N \to K^+_N}$ be the semiflow operator generated by the perturbed problem of $N$ species (with distinct diffusion rates):
\begin{equation}\tag{$P_\ep$}\label{eq:Pep}
\begin{cases}
\partial_t u_i(x,t) = d_i \Delta u_i(x,t) + u_i(x,t) \left[ m(x) - \sum_{j=1}^N u_j(x,t)\right] &\text{ for }1 \leq i \leq N,\\ %&\text{ for }x \in \Omega,\, t>0,\\
\partial_\nu u_i(x,t) = 0 &\text{ for }1 \leq i \leq N.%&\text{ for }x \in \partial\Omega,\, t>0,\\
%u_i(x,0) = u_{i,0}(x) &\text{ for }x \in \Omega.
\end{cases}
\end{equation}
Then, define the projection
$\mathcal{P}: \mathbb{R}^N \to \mathbb{R}^{N_0}$ by
\begin{equation}\label{eq:P}
[\mathcal{P}(y_1,...,y_N)]_k = \sum_{i\in I_k} y_i \quad \text{ for }1 \leq k \leq {N_0}.
\end{equation}
and denote $U = \mathcal{P}u$, i.e.
\begin{equation}\label{eq:agg}
U_k:= \sum_{i \in I_k} u_i \quad \text{ for }1 \leq k \leq {N_0}.
\end{equation}
\begin{remark}\label{rmk:1}
Note that $\Phi(t, \mathcal{P}u_0) = \mathcal{P} \varphi(t,u_0)$ for all $t\geq 0$ and $u_0 \in K_N^+$.
\end{remark}

\subsection{Outline of the proof}
Let $(\hat{d}_k)_{k=1}^{N_0}$ and $(d_i)_{i=1}^N$ be two finite subsets of $\mathbb{R}_+$, which are close in Hausdorff topology such that  
$(\hat{d}_k)_{k=1}^{N_0} \in \mathcal{D}$. We need to show that $(d_i)_{i=1}^N \in \mathcal{D}$ by examining the semiflow generated by \eqref{eq:Pep}. The strategy of our proof is to first obtain a rough Morse decomposition of the flow of \eqref{eq:Pep} by relating it to \eqref{eq:p0}. This is based on the existence of a complete Lyapunov function for the unperturbed semiflow $\Phi$ corresponding to the Morse decomposition (Section \ref{sec:2}),  and some {\it a priori} parabolic estimates that imply uniform continuity of the intermediate and perturbed semiflows (Section \ref{sec:3}). Then the rough Morse decomposition implies that every interior trajectory of the perturbed semiflow is ultimately dominated by the group of slowest dispersers whose diffusion rates are in a neighborhood of $\hat{d}_1$ (Section \ref{sec:4}). In Section \ref{sec:5} and the Appendix, we {recall} the notion of normalized principal Floquet bundle, 
which is a generalization of the notion of principal eigenvalue for elliptic or periodic-parabolic problems and {establish} its smooth dependence with respect to the coefficients of the linear parabolic problem. This is the main technical tool to refine the Morse decomposition and complete the proof of the main theorem (Section \ref{sec:6}).
%(A proof of the smooth dependence is provided in the Appendix. 
We believe this tool will also be useful in the study of dynamics of general reaction-diffusion systems which are not necessarily of Lotka-Volterra type; see, e.g. \cite{Cantrell2020b}. 
Some concluding remarks are presented in Section \ref{sec:7}.

\section{The complete Lyapunov function for the unperturbed semiflow $\Phi$}\label{sec:2}

Since $(\hat{d}_k)_{k=1}^{N_0} \in \mathcal{D}$, i.e. the semiflow $\Phi$ genereated by \eqref{eq:p0} admits a Morse decomposition $\{M(k)\}_{k=1}^{{N_0}+1}$, the classical theorem due to Conley \cite[Chapter II, Result 6B]{ConleyCBMS} (see also
\cite{Hurley1998,Patrao2007,Rybakowski1987} and \cite[Remark 1]{Dockery1998}) guarantees the existence of a continuous function $V: \mathcal{U}' \to \mathbb{R}$,
in some  neighborhood $\mathcal{U}'$ of {$\textup{Inv}\,K^+$} relative to $K^+$,
with the following properties: %(see also \cite[Remark 1]{Dockery1998})
\begin{itemize}
    \item $M(k) \in V^{-1}(k)$ for each $k=1,...,{N_0}+1$,
    \item  If $\Phi([0,T],U_0) \subset [\mathcal{U}' \setminus \cup_{k=1}^{{N_0}+1} M(k)]$, then
    \begin{equation}\label{eq:decreasing}
    V(U_0) > V(\Phi(t,U_0)) \quad \text{ for all }t\in (0,T].
    \end{equation}
%    where $\Phi$ is the semigroup operator generated by \eqref{eq:p0}.
\end{itemize}
By Remark \ref{rmk:1}, {the function $V\circ \mathcal{P}$ is a Lyapunov function of the semiflow $\varphi$, which is generated by \eqref{eq:P0}.} It will be the main tool allowing us to control and compare the dynamics generated by the three semiflows given in Subsection \ref{subsec:1.6}.  In the following, we recall \cite[Lemma 4.4]{Dockery1998}. 
\begin{lemma}\label{lem:1.5}
For given $0< \hat{d}_1<...<\hat{d}_{N_0}$, consider the semigroup operator $\Phi$ generated by the problem \eqref{eq:p0}. 
For any $r>0$ and $\mu>0$, there exist $T>0$ and a neighborhood $\mathcal{U}$ of {$\textup{Inv}\,K^+$}  contained in $\mathcal{U}'$, 
such that if  $\Phi(t,{U}_0)$ is a solution of \eqref{eq:p0} such that
$$
\Phi([0,t], U_0) \subset \mathcal{U} \setminus \left[\cup_{k=1}^{{N_0}+1} B_r(M(k)) \right] \quad \text{ for some }t \geq T,
$$
then
$$
V(U_0) - V(\Phi(t,U_0)) > \mu.
$$
\end{lemma}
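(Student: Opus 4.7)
The plan is to argue by contradiction, driven by the strict decrease of $V$ along trajectories avoiding the Morse sets, combined with a pigeonhole estimate on the total $V$-decrease.

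First I would negate the conclusion, so that there exist $r, \mu > 0$ and sequences $T_n \to \infty$, $\mathcal{U}_n := B_{1/n}(\textup{Inv}\,K^+) \cap \mathcal{U}'$, $U_{0,n} \in \mathcal{U}_n$, and $t_n \geq T_n$ satisfying $\Phi([0, t_n], U_{0,n}) \subset \mathcal{U}_n \setminus \cup_k B_r(M(k))$ and yet $V(U_{0,n}) - V(\Phi(t_n, U_{0,n})) \leq \mu$. Since $M(k) \subset B_r(M(k))$, the trajectory avoids every Morse set, so by \eqref{eq:decreasing} the function $g_n(s) := V(\Phi(s, U_{0,n}))$ is non-increasing on $[0, t_n]$ with total drop at most $\mu$.

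Next I would apply a Fubini/pigeonhole argument on the middle third of the time interval to locate a \emph{slow-decrease moment}: the bound
\[ \int_{t_n/3}^{2t_n/3 - 1} \bigl[ g_n(s) - g_n(s+1) \bigr] ds \leq g_n(t_n/3) - g_n(2t_n/3) \leq \mu \]
produces $s_n \in [t_n/3,\,2t_n/3 - 1]$ with $g_n(s_n) - g_n(s_n + 1) \leq 3\mu/(t_n - 3) \to 0$. Setting $W_n := \Phi(s_n, U_{0,n}) \in \mathcal{U}_n \setminus \cup_k B_r(M(k))$, I would select $\tilde W_n \in \textup{Inv}\,K^+$ with $\| W_n - \tilde W_n \| < 1/n$ and, by compactness of $\textup{Inv}\,K^+$, extract a convergent subsequence $\tilde W_{n_k} \to W_* \in \textup{Inv}\,K^+$. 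Then $W_{n_k} \to W_*$ as well, and the openness of $\cup_k B_r(M(k))$ forces $W_* \notin \cup_k M(k)$.

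Finally I would pass to the limit: joint continuity of $\Phi$ and continuity of $V$ on $\mathcal{U}'$ give $V(W_{n_k}) - V(\Phi(1, W_{n_k})) \to V(W_*) - V(\Phi(1, W_*))$, and the left side converges to $0$. But $W_* \in \textup{Inv}\,K^+ \setminus \cup_k M(k)$ and the Morse sets are invariant, so $\Phi([0, 1], W_*) \subset \textup{Inv}\,K^+ \setminus \cup_k M(k) \subset \mathcal{U}' \setminus \cup_k M(k)$, whence \eqref{eq:decreasing} forces $V(W_*) > V(\Phi(1, W_*))$. This contradiction yields the lemma. The main obstacle will be the Fubini/pigeonhole step, which converts the $O(1)$ bound on the total $V$-decrease over a long time interval into a pointwise $o(1)$ one-unit decrease at an interior moment $s_n$; restricting $s_n$ to the middle third of $[0, t_n]$ is precisely what keeps $W_n$ in the interior of the trajectory and guarantees that the limit $W_*$ is a genuine point of $\textup{Inv}\,K^+$ lying outside every Morse set. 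The other ingredients --- precompactness of sequences near the compact set $\textup{Inv}\,K^+$ and continuity of $V$ and $\Phi$ --- are routine.
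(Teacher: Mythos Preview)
Your argument is correct, but it is organized differently from the paper's. The paper first isolates a cleaner sublemma: it shows directly (by the same compactness/continuity contradiction you use at the end) that there is a neighborhood $\mathcal{U}$ of $\textup{Inv}\,K^+$ and a number $\tilde\mu>0$ such that \emph{every} unit-time segment $\Phi([0,1],U_0)\subset \mathcal{U}\setminus\cup_k B_r(M(k))$ satisfies $V(U_0)-V(\Phi(1,U_0))\ge\tilde\mu$. Once this uniform one-step drop is in hand, the full statement is just arithmetic: choose $T$ an integer with $T\tilde\mu\ge\mu$ and telescope. So the paper proves ``uniform one-step decrease $\Rightarrow$ large total decrease'', while you run the contrapositive ``small total decrease $\Rightarrow$ some small one-step decrease'' via your Fubini/pigeonhole step, and then derive the same contradiction at a limit point of $\textup{Inv}\,K^+$. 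The compactness core is identical in both; the paper's route simply avoids the averaging trick and the middle-third bookkeeping. One small remark: your justification that $\Phi([0,1],W_*)$ misses the Morse sets is cleaner if you note that $\Phi([s_n,s_n+1],U_{0,n})$ already lies in the closed set $X\setminus\cup_k B_r(M(k))$ and pass to the limit there, rather than invoking invariance of the $M(k)$.
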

\begin{proof}%[Proof of Lemma \ref{lem:1.5}]
Let $r>0$ and $\mu>0$ be given. We first prove that there exists a neigborhood $\mathcal{U}$ of {$\textup{Inv}\,K^+$} such that
\begin{equation} \label{eq:UU}
\tilde{\mu}:=\inf \left[V(U_0)- V(\Phi(1,U_0)\right] >0,
\end{equation}
where the infimum is taken over all initial data $U_0$ satisfying $$\Phi([0,1],U_0) \subset \mathcal{U} \setminus [\cup_{k=1}^{{N_0}+1} B_r({M}(k))].$$ 
Suppose to the contrary that \eqref{eq:UU} fails for every neighborhood $\mathcal{U}$ of {$\textup{Inv}\,K^+$}, then
%The lemma follows by proof of contradiction. In particular, fix $r>0$ and $T>0$, 
there exist sequences $\{U^n\} \subset \mathcal{U}'$ and $\{\mu_n\} \subset (0,\infty)$ such that
$$
\Phi([0,1],U^n) \subset \mathcal{U}'\setminus [\cup_{k=1}^{{N_0}+1} B_r({M}(k))], \quad  \text{dist}(U^n,\text{Inv}\,K^+) \to 0,\quad \mu_n \to 0.
$$
and
\begin{equation}
    V(U^n) - V(\Phi(1,U^n)) \leq \mu_n.
\end{equation}
%Since $t \mapsto V(\Phi(t,U^n))$ is non-increasing for $0\leq t \leq t_n$, 
%we may assume without loss of generality that $t_n =T$. 
By the compactness of {$\textup{Inv}\,K^+$}, we may pass to a subsequence so that 
$$
U^n \to \overline U \in \textup{Inv}\,K^+ \setminus [\cup_{k=1}^{{N_0}+1}B_r(M(k))].
$$
By continuous dependence on initial data on the compact time interval $[0,1]$, we have 
$\Phi([0,1],\overline{U}) \subset  \textup{Inv}\,K^+ \setminus [\cup_{k=1}^{{N_0}+1}B_r(M(k))]$, and hence
\begin{align*}
    0 &< V(\overline U) - V(\Phi(1,\overline U))=\lim\limits_{n \to \infty}(V(U^n) - V(\Phi(1,U^n))\leq \lim\limits_{n \to \infty} \mu_n =0,
\end{align*}
% \begin{align*}
%     0 &< V(\overline U) - V(\Phi(1,\overline U))\\
%     &=\lim\limits_{n \to \infty}(V(U^n) - V(\Phi(1,U^n))\\
%     &\leq \lim\limits_{n \to \infty} \mu_n =0,
% \end{align*}
a contradiction. This shows the existence of a
neighborhood $\mathcal{U}$ of {$\textup{Inv}\,K^+$} and a positive number
%n openset $\mathcal{U}$ and 
$\tilde{\mu}>0$ such that \eqref{eq:UU} holds.

Finally, observe that for given $\mu>0$, if $\mu \in (0,\tilde{\mu}]$, then we can take $T=1$ and \eqref{eq:UU} implies the desired conclusion. In case $\mu > \tilde{\mu}$, it suffices to choose $T$ to be an integer such that $\tilde{\mu}T\geq  \mu$. 
\end{proof}

\section{Uniform continuity of the intermediate semiflow $\varphi$ and perturbed semiflow  $\varphi_\ep$}\label{sec:3}

%Fix $0 < \hat{d}_1<...<\hat{d}_K$ and let $N \in \mathbb{R}$, $\ep \in (0,\hat{d}_1/2)$ and $(d_i)_{i=1}^N$ be arbitrary numbers satisfying

Recall that $\varphi$ is the semiflow generated by \eqref{eq:P0} with diffusion rates $(\hat{d}_k)_{k=1}^{N_0}$, and $\varphi_\ep$ is the semiflow generated by \eqref{eq:Pep} with diffusion rates $(d_i)_{i=1}^N$ satisfying $N \geq N_0$ and \eqref{eq:pert}. The purpose of this section is to establish some parabolic estimates and show that the trajectories of $\varphi$ and $\varphi_\ep$ stay close in any finite time interval.
(In the following, $\|\cdot\| = \| \cdot\|_{C(\overline\Omega)}$ or $\|\cdot\|_{[C(\overline\Omega)]^n}$ for some $n$, unless otherwise specified.)

\begin{lemma}\label{lem:L1}
Let $(u_i)_{i=1}^N$ be a non-negative solution of \eqref{eq:Pep}  or \eqref{eq:P0}, %such that
%$$
%\sum_{i=1}^N\| u_i(\cdot,0)\|_{L^1(\Omega)} 
%=\|\sum_{i=1}^N u_i(x,0)\|_{L^1(\Omega)} 
%\leq M, 
%\quad \text{ for some }M \geq 2|\Omega|\sup_\Omega m,
%$$
then 
%\begin{equation}\label{eq:3.1'}
%\begin{cases}
$$
\sup_{t \geq 0}\sum_{i=1}^N \|u_i(\cdot,t)\|_{L^1(\Omega)} \leq \max\left\{\sum_{i=1}^N\| u_i(\cdot,0)\|_{L^1(\Omega)},|\Omega|\sup_\Omega m\right\}, 
$$
and
$$
\limsup_{t \to \infty}\sum_{i=1}^N\| u_i(\cdot,t)\|_{L^1(\Omega)} < 2|\Omega|\sup_\Omega m.
$$
In particular, the set $\mathcal{N}$, given by 
\begin{equation}\label{eq:N}
\mathcal{N} = \{u \in K^+:\, \sum_{i=1}^N \|u_i\|_{L^1(\Omega)} < 2 |\Omega| \sup_\Omega m\},
\end{equation}
is open in $X$ and 
 forward-invariant with respect to both \eqref{eq:P0} and \eqref{eq:Pep}, and hence contains the respective maximal bounded invariant sets {$\textup{Inv}\,K^+$} and  $\textup{Inv}\,K^+_\ep$.
\end{lemma}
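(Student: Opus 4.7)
The plan is to reduce to a scalar logistic differential inequality for the aggregated $L^1$ mass
$$
S(t) := \sum_{i=1}^N \int_\Omega u_i(x,t)\,dx,
$$
and then read off all three claims from it. First I would sum the $N$ equations of \eqref{eq:Pep} (the argument for \eqref{eq:P0} is identical) and integrate over $\Omega$. The Neumann boundary condition makes each diffusion term $d_i \int_\Omega \Delta u_i\,dx = d_i \int_{\partial\Omega} \partial_\nu u_i\,dS$ vanish independently of the $d_i$'s, so setting $U(x,t) := \sum_{i=1}^N u_i(x,t) \geq 0$,
$$
\frac{dS}{dt} \,=\, \int_\Omega U(m - U)\,dx \,\leq\, (\sup_\Omega m)\,S - \frac{S^2}{|\Omega|},
$$
where the inequality combines $\int_\Omega Um\,dx \leq (\sup_\Omega m)\,S$ with Cauchy--Schwarz $S^2 = \bigl(\int_\Omega U\bigr)^2 \leq |\Omega|\int_\Omega U^2\,dx$. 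The right-hand side is the logistic vector field with globally attracting positive equilibrium $s_* := |\Omega|\sup_\Omega m$.

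The next step is a scalar comparison: $S(t) \leq \tilde S(t)$ for the logistic solution $\tilde S$ with $\tilde S(0) = S(0)$. Standard phase-line analysis gives $\tilde S(t) \leq \max\{S(0), s_*\}$ for all $t \geq 0$ and $\tilde S(t) \to s_*$ as $t \to \infty$. Feeding this back into the comparison yields the first two asserted estimates, with the strict inequality in the second simply $s_* < 2 s_*$.

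For the statements about $\mathcal{N}$, openness is immediate because $\|u_i\|_{L^1} \leq |\Omega|\,\|u_i\|_{C(\overline\Omega)}$, so $u \mapsto \sum_i \|u_i\|_{L^1}$ is continuous on $X$ and $\mathcal{N}$ is the preimage of an open half-line. Forward invariance is the first derived bound applied to initial data in $\mathcal{N}$: if $S(0) < 2 s_*$, then $S(t) \leq \max\{S(0), s_*\} < 2 s_*$. Containment $\textup{Inv}\,K^+ \cup \textup{Inv}\,K^+_\ep \subset \mathcal{N}$ needs slightly more care: given $u$ on a bounded full trajectory $\gamma$, I would apply the comparison on $[t_0, 0]$ for every $t_0 < 0$, obtaining $S_\gamma(0) \leq \tilde S^{t_0}(0)$ where $\tilde S^{t_0}$ is the logistic flow with $\tilde S^{t_0}(t_0) = S_\gamma(t_0)$. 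Boundedness of $\gamma$ keeps $S_\gamma(t_0)$ bounded, so sending $t_0 \to -\infty$ and exploiting the attractiveness of $s_*$ for arbitrary positive initial data gives $\tilde S^{t_0}(0) \to s_*$, hence $S_\gamma(0) \leq s_* < 2 s_*$; the degenerate case $S_\gamma(t_0) = 0$ forces $\gamma \equiv 0$ by uniqueness and lies trivially in $\mathcal{N}$.

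I do not anticipate a substantive obstacle here; the only mildly delicate point is the backward-in-time comparison used in the last step, where one must verify that the logistic attraction toward $s_*$ is uniform across bounded positive initial values so that the limit as $t_0 \to -\infty$ is indeed $s_*$, which is a direct check from the explicit logistic formula.
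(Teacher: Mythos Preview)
Your proposal is correct and follows essentially the same route as the paper: integrate the summed equation over $\Omega$, use Neumann to kill the diffusion terms, bound $\int Um \le (\sup_\Omega m)S$ and $\int U^2 \ge S^2/|\Omega|$ by Cauchy--Schwarz, and arrive at the logistic differential inequality $S' \le (\sup_\Omega m)S - S^2/|\Omega|$. The paper stops after displaying this inequality and leaves the remaining conclusions implicit; you have simply filled in the standard comparison argument and, for the containment $\textup{Inv}\,K^+ \cup \textup{Inv}\,K^+_\ep \subset \mathcal{N}$, supplied the backward-in-time logistic comparison that the paper glosses over with ``hence''.
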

\begin{proof}
Integrate \eqref{eq:Pep} over $\Omega$ and sum over $1\leq i\leq N$,  we have
\begin{align*}
\frac{d}{dt}  \bigg\|\sum_{i=1}^N u_i(\cdot,t) \bigg\|_{L^1(\Omega)}  &= \bigg\|\sum_{i=1}^N m(\cdot) u_i (\cdot,t) \bigg\|_{L^1(\Omega)} - \bigg\|\sum_{i=1}^N u_i(\cdot,t) \bigg\|_{L^2(\Omega)}^2. \\
%$$
%Using Cauch-Schwartz inequality, then
%    $$
%\frac{d}{dt}  \left\|\sum_{i=1}^N u_i \right\|_{L^1(\Omega)}  
&\leq (\sup_\Omega m) \bigg\|\sum_{i=1}^N u_i(\cdot,t) \bigg\|_{L^1(\Omega)} - \frac{1}{|\Omega|} \bigg\|\sum_{i=1}^N u_i(\cdot,t) \bigg\|_{L^1(\Omega)}^2, 
\end{align*}
where we used Cauchy-Schwartz inequality for the last inequality. The assertions follow from the properties of the solution to the logistic type ODE.
\end{proof}
\begin{lemma}\label{lem:1.7}
Let $(\hat{u}_i)_{i=1}^N$ (resp. $(u_i)_{i=1}^N$) be a non-negative solution of \eqref{eq:P0} (resp. \eqref{eq:Pep}) with initial data in $\mathcal{N}$. 
%If $\ep \in (0,\hat{d}_1/2)$, then 
There exists ${\alpha \in(\beta, 1)}$ and $C_1=C_1((\hat{d}_k)_{k=1}^{N_0},\Omega,m)$ (but otherwise independent of $N \geq N_0$ and $(d_i)_{i=1}^N$ satisfying \eqref{eq:pert} for some $\epsilon \in (0,\hat{d}_1/2)$) such that %for some $0 < \alpha < 1$,
\begin{equation}\label{eq:lfinboundgeq1}
\sum_{i=1}^N \|\hat{u}_i\|_{C^{1+\alpha,(1+\alpha)/2}(\overline\Omega \times [1,\infty))} + \sum_{i=1}^N \|u_i\|_{C^{1+\alpha,(1+\alpha)/2}(\overline\Omega \times [1,\infty))} \leq C_1,
%\sum_{i=1}^N \|\hat{u}_i(x,t)\|_{C^{2+\alpha,1+\alpha/2}(\overline\Omega \times [1,\infty))} + \sum_{i=1}^N \|u_i(x,t)\|_{C^{2+\alpha,1+\alpha/2}(\overline\Omega \times [1,\infty))} \leq C_1.
\end{equation}
where $C^{\alpha,\alpha/2}$ is the usual parabolic H\"{o}lder space with exponent $\alpha$ $($see, e.g. \cite[Chap. IV.1]{Lieberman}$)$.
\end{lemma}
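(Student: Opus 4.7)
The plan is to derive parabolic regularity estimates that are uniform in $N$ and in the choice of $(d_i)_{i=1}^N$ satisfying \eqref{eq:pert}. The main obstacle is exactly this uniformity: the naive pointwise comparison with the logistic ODE $z'=z(\sup_\Omega m - z)$ yields only a per-species bound $\|u_i(\cdot,t)\|_\infty \le \max\{\|u_i(\cdot,1)\|_\infty,\sup_\Omega m\}$, which sums to an $N$-dependent constant; and the crude inequality $\|u_i\|_\infty \le \|W\|_\infty$ (with $W:=\sum_{i=1}^N u_i$) also sums to $N\|W\|_\infty$, so even a uniform $L^\infty$ bound on $W$ alone is not enough. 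The key is to first control $W$ in $L^\infty$ and then to exploit the fact that each per-species equation is \emph{linear} in $u_i$ once $W$ is known, so that a $C^{1+\alpha,(1+\alpha)/2}$ estimate scaling with $\|u_i(\cdot,s)\|_{L^1}$ becomes available; the $\ell^1$-sum in $i$ then collapses to $\|W(\cdot,s)\|_{L^1}$, which is uniformly bounded by Lemma \ref{lem:L1}.

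\medskip

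\emph{Step 1 (uniform $L^\infty$ bound on $W$).} Since $m-W \le \sup_\Omega m$ pointwise, each $u_i$ satisfies the differential inequality $\partial_t u_i - d_i\Delta u_i \le (\sup_\Omega m)\, u_i$ under Neumann boundary condition. Viewing $v_i := e^{-(\sup_\Omega m)(t-s)}u_i$ as a subsolution of the Neumann heat equation yields, for $t \ge 1/2$,
$$
u_i(x,t) \le e^{(\sup_\Omega m)/2} \int_\Omega p^{d_i}_{1/2}(x,y)\, u_i\bigl(y,\,t-\tfrac{1}{2}\bigr)\, dy,
$$
where $p^{d_i}_{1/2}$ denotes the Neumann heat kernel. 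Since $\ep < \hat d_1/2$ implies $d_i \in [\hat d_1/2,\,\hat d_K+\ep]$, the kernel $p^{d_i}_{1/2}$ is uniformly bounded on $\overline\Omega\times\overline\Omega$. Summing in $i$ and applying Lemma \ref{lem:L1} yields $\|W(\cdot,t)\|_{L^\infty(\Omega)}\le C$ for all $t\ge 1/2$, uniformly in $N$ and in $(d_i)$, and likewise for $\hat u_i$.

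\medskip

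\emph{Step 2 (linear parabolic regularity).} With Step 1 in hand, each $u_i$ satisfies on $[1/2,\infty)$ the \emph{linear} parabolic problem $\partial_t u_i - d_i\Delta u_i = c(x,t)\, u_i$ under Neumann boundary condition, where $c:=m-W$ is uniformly bounded in $L^\infty$. A standard bootstrap --- $L^1\to L^\infty$ smoothing of this linear semigroup on a short time slice (exactly as in Step 1, applied to the coefficient $c$ instead of $\sup_\Omega m$), followed by interior parabolic $L^p$ theory together with the Sobolev embedding $W^{2,1}_p \hookrightarrow C^{1+\alpha,(1+\alpha)/2}$ for $p$ sufficiently large --- gives, for every $T\ge 3/2$,
$$
\|u_i\|_{C^{1+\alpha,(1+\alpha)/2}(\overline\Omega\times[T,T+1])} \le C\,\|u_i(\cdot,T-1)\|_{L^1(\Omega)},
$$
with $C$ independent of $T$, $N$, and $(d_i)$. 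Summing in $i$, using Lemma \ref{lem:L1} once more, and covering $[1,\infty)$ by unit intervals $[T,T+1]$ for $T\in\mathbb{Z}_{\ge 1}$ (with the initial piece $[1,3/2]$ handled directly from the $L^1$ bound at $t=0$) delivers the desired bound. The same argument applies verbatim to $\hat u_i$ solving \eqref{eq:P0}, with $d_i$ replaced by $\hat d_k$ for $i\in I_k$.
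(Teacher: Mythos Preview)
Your proof is correct and follows essentially the same route as the paper. Both arguments exploit the differential inequality $\partial_t u_i - d_i\Delta u_i \le m(x)u_i$ to get a per-species $L^1\to L^\infty$ smoothing estimate (you via explicit heat-kernel bounds, the paper via the local maximum principle for strong sub-solutions), then view the full equation as linear in $u_i$ with $L^\infty$-bounded coefficient $m-\sum_j u_j$ and apply parabolic $L^p$ estimates plus Sobolev embedding to reach $C^{1+\alpha,(1+\alpha)/2}$ control in terms of $\|u_i\|_{L^1}$, and finally sum in $i$ so that the bound collapses to the $N$-independent $L^1$ control on $W$ from Lemma~\ref{lem:L1}.
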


\begin{proof}
% \begin{enumerate}
%   \item 
   By Lemma \ref{lem:L1}, we have 
   $$
   \sup_{t\geq 1} \sum_{j=1}^N \|u_i\|_{L^1(\Omega \times [t-1,t+1])} \leq 4|\Omega|\sup_\Omega m.
   $$
    %\item 
    Since $\partial_t u_i - d_i \Delta u_i \leq m(x) u_i$, we can apply the local maximum principle \cite[Chapter VI, Theorem 7.36]{Lieberman} 
    to deduce that
    \begin{equation}\label{eq:Linf}
    \sup_{t\geq 1}\|u_i\|_{L^\infty(\overline\Omega \times [t-1/2,t+1])} \leq C\sup_{t\geq 1} \|u_i\|_{L^1(\Omega\times[t-1,t+1])}.
    \end{equation}
    It is essential that we have dropped the nonlinear terms involving $u_iu_j$ and work with the differential inequality when applying the local maximum principal for strong sub-solutions. In this way, the constant in \eqref{eq:Linf} can be chosen independently of initial data.
%    Next, apply the Krylov-Safonov type estimates \cite[p. 192, Corollary 7.36]{Lieberman} for parabolic equations of non-divergence form, we have
%     \item 

By applying a parabolic $L^p$ estimate to the parabolic equation $\partial_t u_i -d_i \Delta u_i = (m(x) - \sum u_j)u_i$ (which 
can now be regarded as  a linear parabolic equation of $u_i$ with $L^\infty$ bounded coefficients)
 and by the Sobolev embedding theorem, the above can be improved to
        \begin{equation}\label{eq:Lp}
    \sup_{t\geq 1}\|u_i\|_{C^{1+\alpha,(1+\alpha)/2}(\overline\Omega\times[t,t+1])}  \leq C'\sup_{t\geq 1} \|u_i\|_{L^1(\Omega\times[t-1,t+1])}.
    \end{equation}
    % \item By parabolic Schauder estimate, the above can be improved to
    %      \begin{equation}\label{eq:sch}
    % \sup_{t\geq 1}\|u_i\|_{C^{2+\alpha,1+\alpha/2}(\overline\Omega\times[t,t+1])}  \leq C''\sup_{t\geq 1} \|u_i\|_{L^1(\Omega\times[t-1,t+1])}.
    % \end{equation}
    And the desired conclusion follows by summing $i$ from $1$ to $N$, 
$$        \sum_{j=1}^N\|u_i\|_{C^{1+\alpha,(1+\alpha)/2}(\overline\Omega \times [1,\infty))}  \leq C'\sup_{t\geq 1} \sum_{j=1}^N\|u_i\|_{L^1(\Omega\times[t-1,t+1])} \leq C''.$$
%\end{enumerate}
Since $d_i$ are uniformly bounded from above and below from zero, the constants $C,C',C''$ in the above estimates can be chosen to be independent of $N$ and $(d_i)_{i=1}^N$. This completes the proof.
\end{proof}

In summary, we have the following.

\begin{corollary}\label{cor:inv_K}
Fix $0<\hat{d}_1<...<\hat{d}_{N_0}$, $\ep \in (0,\hat{d}_1/2)$ and consider arbitrary
 $N \geq {N_0}$ and $(d_i)_{i=1}^N$ satisfying \eqref{eq:pert}. 
Then we have
$$
\left(\textup{Inv}\,K^+ \cup \textup{Inv}\,K^+_\ep\right) \subset \mathcal{N},
$$
where 
 {$\textup{Inv}\,K^+$} and $\textup{Inv}\,K^+_\ep$ are the invariant sets generated by \eqref{eq:P0} and \eqref{eq:Pep} respectively. Furthermore, there exists $C_0$ (dependent on $\hat{d}_k$, but independent of $\ep\in(0,\hat{d}_1/2)$, $N$ and $d_i$) such that for any solution $u$ of \eqref{eq:Pep} (resp. $\hat{u}$ of \eqref{eq:P0}) with initial data $u_0 \in \varphi(1,\mathcal{N}) \cup \varphi_\ep(1,\mathcal{N})$, we have
 \begin{equation}\label{eq:1.7a}
\sum_{i=1}^N\|u_i(\cdot,t)\|_{C(\overline\Omega)} +    \sum_{i=1}^N\|\hat{u}_i(\cdot,t)\|_{C(\overline\Omega)} \leq C_0  \quad \text{ for } t \geq 0,
\end{equation}
and %for each $0<\xi<1/2$,
\begin{equation}\label{eq:1.7b}
\sum_{i=1}^N\|(-\Delta)^{1/2}u_i(\cdot,t)\|_{C(\overline\Omega)} \leq C_0(1 + t^{-1/2})  \quad \text{ for } t \geq 0.
\end{equation}
\end{corollary}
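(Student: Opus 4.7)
The corollary packages three claims: the inclusion $\textup{Inv}\,K^+\cup\textup{Inv}\,K^+_\ep\subset\mathcal{N}$, the $C(\overline\Omega)$ bound \eqref{eq:1.7a}, and the fractional-power bound \eqref{eq:1.7b}. The first is an immediate restatement of the conclusion of Lemma \ref{lem:L1}. The second follows from Lemma \ref{lem:1.7} by a time-shift: given $u_0 = \varphi_\ep(1,\tilde u_0)$ with $\tilde u_0 \in \mathcal{N}$, the semigroup identity $\varphi_\ep(t,u_0) = \varphi_\ep(t+1,\tilde u_0)$ identifies the solution starting at $u_0$ with the time-$(t+1)$ slice of the solution starting at $\tilde u_0$. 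Applying Lemma \ref{lem:1.7} to this latter solution yields $\sum_i\|u_i(\cdot,t)\|_{C^{1+\alpha,(1+\alpha)/2}(\overline\Omega\times[0,\infty))}\le C_1$, with $C_1$ depending only on $(\hat d_k)$, $\Omega$, $m$; the same applies to solutions of \eqref{eq:P0} starting in $\varphi(1,\mathcal N)$, so choosing $C_0\ge 2C_1$ yields \eqref{eq:1.7a}.

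For \eqref{eq:1.7b} my plan is to split into $t\ge 1$ and $t\in(0,1]$. For $t\ge 1$ the space-time H\"older bound obtained above gives a uniform spatial $C^{1+\alpha}$ bound, which, combined with the embedding $C^{1+\alpha}_{\mathrm{Neu}}(\overline\Omega)\hookrightarrow D(\Delta^\xi)$ for $\xi<(1+\alpha)/2$ (see \cite[Ch.~2]{Lunardi}), yields $\sum_i\|\Delta^\xi u_i(\cdot,t)\|_C\le C$. For $t\in(0,1]$ I would invoke Duhamel's formula
\[
u_i(\cdot,t) = e^{d_i t\Delta}u_{i,0} + \int_0^t e^{d_i(t-s)\Delta}f_i(\cdot,s)\,ds,\qquad f_i:=u_i\bigl(m-\textstyle\sum_j u_j\bigr),
\]
together with the analytic-semigroup smoothing bound $\|\Delta^\xi e^{d_i t\Delta}\|_{C\to C}\le C_\xi(d_i t)^{-\xi}$. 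Summing over $i$ and using \eqref{eq:1.7a} to bound $\sum_i\|u_{i,0}\|_C$ and $\sum_i\|f_i(\cdot,s)\|_C\le(\sup_\Omega m + C_0)C_0$ by $N$-independent constants, I obtain
\[
\sum_{i=1}^N\|\Delta^\xi u_i(\cdot,t)\|_C \;\le\; C\,t^{-\xi} + C\int_0^t (t-s)^{-\xi}\,ds \;\le\; C\bigl(t^{-\xi}+t^{1-\xi}\bigr) \;\le\; C_0(1+t^{-\xi}),
\]
which is \eqref{eq:1.7b} on $(0,1]$; the two regimes glue together.

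The main subtlety to watch is $N$-independence of every constant. As in the proof of Lemma \ref{lem:1.7}, this is handled by summing the estimates \emph{before} closing them: the quadratic cross-terms satisfy $\sum_i|f_i|\le\bigl(\sup_\Omega m+\sum_j|u_j|\bigr)\sum_i|u_i|$, which is controlled by the scalar quantity $\sum_i\|u_i(\cdot,t)\|_C$, hence by $C_0$. Uniformity in $\ep\in(0,\hat d_1/2)$ and in $(d_i)_{i=1}^N$ is automatic from \eqref{eq:pert}, which confines the $d_i$ to the compact interval $[\hat d_1/2,\hat d_K+\hat d_1/2]\subset(0,\infty)$, over which all sectorial-operator and analytic-semigroup constants may be chosen uniformly.
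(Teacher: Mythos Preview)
Your argument for \eqref{eq:1.7a} has a gap in what one might call the ``cross cases.'' The statement asks for bounds on a solution $u$ of \eqref{eq:Pep} with initial data $u_0 \in \varphi(1,\mathcal{N}) \cup \varphi_\ep(1,\mathcal{N})$, and this union is essential: in the proof of Proposition~\ref{prop:stepthree} the same $u_0$ is evolved forward under \emph{both} semiflows. Your time-shift identity $\varphi_\ep(t,u_0)=\varphi_\ep(t+1,\tilde u_0)$ only applies when $u_0=\varphi_\ep(1,\tilde u_0)$; if instead $u_0\in\varphi(1,\mathcal{N})$ and you evolve under $\varphi_\ep$, there is no $\tilde u_0\in\mathcal{N}$ to shift back to. In that case Lemma~\ref{lem:1.7} still tells you $\sum_i\|(u_0)_i\|_{C}\le C_1$ and (since $u_0\in\mathcal{N}$ by forward invariance) gives the bound for $t\ge 1$, but you are missing the range $t\in[0,1]$. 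Your Duhamel argument for \eqref{eq:1.7b} on $(0,1]$ then inherits the same gap, since it invokes \eqref{eq:1.7a} to control $\sum_i\|f_i(\cdot,s)\|_C$.

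The paper closes this gap with an elementary comparison: from $\partial_t u_i - d_i\Delta u_i \le (\sup_\Omega m)u_i$ one compares against the constant super-solution $e^{(\sup_\Omega m)t}\|(u_0)_i\|_C$ to obtain $\|u_i(\cdot,t)\|_C\le e^{\sup_\Omega m}\|(u_0)_i\|_C$ for $t\in[0,1]$, and then sums over $i$. This requires only the $C$-bound on $u_0$ (available in all cases from Lemma~\ref{lem:1.7}) and works regardless of which semiflow is being run. Once this bridge is in place, your Duhamel argument for \eqref{eq:1.7b} on $(0,1]$ goes through exactly as written; for $t\ge 1$ the paper simply cites \cite[Theorem 5.1.17]{Lunardi}, which is the abstract version of the same analytic-semigroup computation you spell out.
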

\begin{proof}
Fix initial data $u_0 \in \varphi(1,\mathcal{N}) \cup \varphi_\ep(1,\mathcal{N})$. The first assertion is contained in Lemma \ref{lem:L1}.
Next,  
by Lemma \ref{lem:1.7},  
there exists $C_{1}$ depending on $(\hat{d}_k)_{k=1}^{N_0},\Omega$, and $m(x)$ but independent of $N$ and $(d_i)_{i=1}^N$ such that (here $(u_0)_j$ denotes the $j$-th components of $u_0$)
$$
\sum_{j=1}^N \|(u_0)_j\|_{C(\overline\Omega)} \leq C_1 \quad \text{ and }\quad \partial_\nu u_0 = 0 \quad \text{ on }\partial\Omega.
$$

% , and define
% $$
% C_{1,j} :=\|(u_0)_j\|_{C^2(\overline\Omega)}.
% $$
% By Lemma \ref{lem:1.7},  
% there exists $C_{1}$ depending on $(\hat{d}_k)_{k=1}^{N_0},{{\ep}},\Omega,m$ but independent of $N$ and $(d_i)_{i=1}^N$ such that $\sum_{j=1}^N C_{1,j} \leq C_{1}$. 

We claim that %there is $C_2=C_2(C_1,\sup_\Omega m)$ such that
$$
\sup_{0 \leq t \leq 1} \sum_{j=1}^N\|u_j(\cdot,t)\|_{C(\overline\Omega)} \leq C_1 e^{\sup_\Omega m}.
$$
Indeed, by using the differential inequality
\begin{equation}\label{eq:uii}
\begin{cases}
\partial_t u_i - d_i \Delta u_i \leq (\sup_\Omega m) u_i &\text{ in }\Omega \times [0,\infty).\\
\partial_\nu u_i =0 &\text{ on }\partial\Omega \times [0,\infty),\\
u_i(x,0) = (u_0)_i(x) &\text{ in }\Omega,
\end{cases}
\end{equation}
we can compare each $u_i$ with the super-solution $\overline{u}_i$ of \eqref{eq:uii}, given by
$$
\overline{u}_i(x,t):= e^{(\sup_\Omega m)t}\|(u_0)_i\|_{C(\overline\Omega)}  \quad\text{ in }\Omega\times[0,\infty),
$$
to deduce that
$$
\|u_i(\cdot,t)\|_{C(\overline\Omega)} \leq e^{\sup_\Omega m}\|(u_0)_i\|_{C(\overline\Omega)} \quad \text{ for }t  \in [0,1].
$$
Hence we have $$\sup_{0 \leq t \leq 1}\sum_{i=1}^N \|u_i(\cdot,t)\|_{C(\overline\Omega)} \leq  e^{\sup_\Omega m}\sum_{j=1}^N \|(u_0)_j\|_{C(\overline\Omega)}  \leq C_1 e^{\sup_\Omega m}.$$
Combining with \eqref{eq:lfinboundgeq1}, we deduce the boundedness of $\sup_{t \geq 0}\sum_{i=1}^N \|u_i(\cdot,t)\|_{C(\overline\Omega)}$. Since the proof for the boundeness of $\sup_{t \geq 0}\sum_{i=1}^N \|\hat{u}_i(\cdot,t)\|_{C(\overline\Omega)}$ is similar, we omit the proof. This establishes \eqref{eq:1.7a}.

Finally, we observe that each $u_i$ satisfies a non-autonomous linear parabolic equation with regular coefficients, so that \eqref{eq:1.7b} follows from \cite[(5.1.55) in Theorem 5.1.17]{Lunardi}.
\end{proof}

%\subsection{Step Three of Proof of Theorem \ref{thm:main}}

Recalling that $\varphi$ (resp. $\varphi_\ep$) is the semiflow generated by \eqref{eq:P0} (resp. \eqref{eq:Pep}), we now 
prove the main theorem of this section.
\begin{proposition}\label{prop:stepthree}
 Fix $(\hat{d}_k)_{k=1}^{N_0} \in \mathcal{D}$. For each $T>0$ and $\eta>0$, there exists $\ep_1$ such that 
 for $\ep \in (0,\ep_1)$, and  arbitrary $N \geq N_0$ and $(d_i)_{i=1}^N$ satisfying \eqref{eq:pert}, we have
 \begin{equation}
    {\sup_{u_0}} \| \mathcal{P} \varphi(t,u_0) - \mathcal{P}\varphi_\ep(t,u_0)\| < \eta\quad \text{ for } \,\,0 \leq t \leq T,% \quad u_0 \in \varphi(1,\mathcal{N})\cup \varphi_\ep(1,\mathcal{N}). 
\end{equation}
where the supremum is taken over all $u_0 \in \varphi(1,\mathcal{N})\cup \varphi_\ep(1,\mathcal{N})$, $\mathcal{P}$ is the projection operator given in \eqref{eq:P} and the open set $\mathcal{N}$ is defined in \eqref{eq:N}.
%  \begin{equation}
%     \| \mathcal{P} \varphi(t,u_0) - \mathcal{P}\varphi_\ep(t,u_0)\| < \eta\quad \text{ for } \,\,0 \leq t \leq T, \quad u_0 \in \varphi(1,\mathcal{N})\cup \varphi_\ep(1,\mathcal{N}). 
% \end{equation}
% where  $\mathcal{P}$ is the projection operator given in \eqref{eq:P} and the open set $\mathcal{N}$ is defined in \eqref{eq:N}.
\end{proposition}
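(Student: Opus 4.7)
The plan is to bound the aggregated difference $w := \mathcal{P}\varphi_\ep(\cdot,u_0) - \mathcal{P}\varphi(\cdot,u_0)$ directly in the $C(\overline\Omega)$-norm, via a Duhamel representation against the Neumann heat semigroup generated by $\hat d_k \Delta$, followed by Gronwall's inequality. The central input will be the $N$-independent bound on $\sum_i \|u_i\|_{C^{1+\alpha}}$ supplied by Lemma \ref{lem:1.7}.

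First I would establish that for all $t \in [0,T]$,
\begin{equation*}
\sum_{i=1}^N \|u_i(\cdot,t)\|_{C^{1+\alpha}(\overline\Omega)} + \sum_{i=1}^N \|\hat u_i(\cdot,t)\|_{C^{1+\alpha}(\overline\Omega)} \leq C^{*},
\end{equation*}
for some $C^{*}=C^{*}(T,\hat d,\Omega,m)$ independent of $N,\ep,(d_i)$, where $\hat u := \varphi(\cdot,u_0)$ and $u := \varphi_\ep(\cdot,u_0)$. Since $u_0 \in \mathcal{N}$ by the forward invariance in Lemma \ref{lem:L1}, Lemma \ref{lem:1.7} supplies this bound on $[1,\infty)$; for $t \in [0,1]$ it follows from the fact that $u_0$ itself lies in $C^{1+\alpha}(\overline\Omega)$ (being in the time-$1$ image of $\mathcal{N}$ under either $\varphi$ or $\varphi_\ep$) together with standard parabolic Schauder estimates on the short interval $[0,1]$.

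Next, using Remark \ref{rmk:1}, $\hat U_k := \sum_{i \in I_k} \hat u_i$ solves the unperturbed system \eqref{eq:p0}, while summing the $i \in I_k$ equations of \eqref{eq:Pep} and setting $F_k := \sum_{i\in I_k}(d_i-\hat d_k)u_i$ gives
\begin{equation*}
\partial_t U_k - \hat d_k \Delta U_k = \Delta F_k + U_k\Bigl(m - \sum_l U_l\Bigr).
\end{equation*}
Subtracting, the difference $w_k := U_k - \hat U_k$ vanishes at $t=0$ and satisfies, with homogeneous Neumann boundary conditions,
\begin{equation*}
\partial_t w_k - \hat d_k \Delta w_k = \Delta F_k + w_k\Bigl(m - \sum_l U_l\Bigr) - \hat U_k \sum_l w_l.
\end{equation*}
By Step 1, $\|F_k(\cdot,s)\|_{C^{1+\alpha}} \leq \ep C^{*}$ and the coefficients in the coupling terms are controlled uniformly. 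Expressing $w_k$ via Duhamel's formula and writing $\Delta F_k = \nabla \cdot (\nabla F_k)$, the standard smoothing estimate $\|e^{s\hat d_k\Delta} \nabla \cdot g\|_{C(\overline\Omega)} \leq C s^{-1/2}\|g\|_\infty$ for the Neumann semigroup yields
\begin{equation*}
\Bigl\|\int_0^t e^{(t-s)\hat d_k \Delta} \Delta F_k(s)\,ds\Bigr\|_{C(\overline\Omega)} \leq C\ep\sqrt{T}\,C^{*},
\end{equation*}
while the coupling terms contribute at most $C'\int_0^t \|w(s)\|_{C(\overline\Omega)}\,ds$. Gronwall's inequality then gives $\|w(t)\|_{C(\overline\Omega)} \leq C\ep\sqrt{T}\,C^{*} e^{C'T}$ for $t \in [0,T]$, which is below $\eta$ once $\ep_1$ is chosen sufficiently small.

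The main obstacle is ensuring that the constant $C^{*}$ above is independent of $N$. This hinges on Lemma \ref{lem:1.7} bounding the \emph{sum} $\sum_i \|u_i\|_{C^{1+\alpha}}$ rather than each component separately, which in turn rests on the $L^1$-based argument of Lemma \ref{lem:L1} never introducing an $N$-dependent factor. A secondary technical point concerns the Neumann-semigroup smoothing estimate in the sup-norm: if one prefers to bypass divergence-form manipulations (and the implicit boundary compatibility they require), one can instead invoke the fractional-power bound $\|(-\Delta)^{1-\xi}e^{s\hat d_k\Delta}\|_{C\to C} \leq Cs^{\xi-1}$ for a small $\xi>0$, combined with the $\Delta^\xi$-estimate \eqref{eq:1.7b} from Corollary \ref{cor:inv_K}, which produces the same $O(\ep)$ bound via an integrable singularity at $s=t$.
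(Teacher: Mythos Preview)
Your argument is correct and follows the same skeleton as the paper's proof: write the aggregated variables $U_k,\hat U_k$ via Duhamel against $e^{t\hat d_k\Delta}$, subtract, isolate an $O(\ep)$ perturbation term coming from the diffusion mismatch, and close by Gronwall. The only difference is in how the perturbation integral is controlled: you bound $\nabla F_k$ in $L^\infty$ uniformly in $t\in[0,T]$ and use the $s^{-1/2}$ divergence-form smoothing, whereas the paper never insists on a $t$-uniform gradient bound and instead uses the fractional-power estimate $\|\Delta^{1-\xi}e^{s\hat d_k\Delta}\|\le C s^{-(1-\xi)}$ together with the bound $\sum_i\|\Delta^\xi u_i(t)\|\le C(1+t^{-\xi})$ from Corollary~\ref{cor:inv_K}, absorbing an integrable singularity at $s=0$. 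You already flag this alternative yourself, so the two arguments coincide up to that technical choice.
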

\begin{proof}
Let {$\textup{Inv}\,K^+$} (resp. $\textup{Inv}\,K^+_\ep$) denote the maximal invariant set in $K^+$ of the semiflow $\varphi$ generated by \eqref{eq:P0} (resp. the semiflow $\varphi_\ep$ generated by \eqref{eq:Pep}). Let $\mathcal{N}$ be the  neighborhood of {$\textup{Inv}\,K^+$} specified by \eqref{eq:N}.

Let $(\hat{u}_i)_{i=1}^N = \varphi(t,u_0)$ and $({u}_i)_{i=1}^N = \varphi_\ep(t,u_0)$. Since $u_0 \in \varphi(1,\mathcal{N}) \cup \varphi_\ep(1,\mathcal{N})$, 
we can apply Corollary \ref{cor:inv_K} 
{so that the estimates \eqref{eq:1.7a} and \eqref{eq:1.7b} hold for some constant $C_0$ that is independent of $N$.}
% \begin{equation}\label{eq:I2a}
% \sum_{i=1}^N\left[\|u_i(t)\| + \|\hat{u}_i(t)\|\right] \leq C \quad \text{ and }\quad    \sum_{i=1}^N \| \Delta^{1/2} u_i(t)\| \leq C(1 + t^{-1/2}) \end{equation}
% for $t\geq 0$, where $C$ is independent of $N$.

We will estimate $u_i$ by the variation of constants formula. 
Recall the partition $\{I_k\}_{k=1}^{N_0}$ of $\{1,2,...,N\}$ given in \eqref{eq:pert}. 
For $i \in I_k$, we have
\begin{equation}\label{eq:I2b}
\begin{cases}
    \hat{u}_i(t) &= e^{t\hat{d}_k\Delta }(u_0)_i + \int_0^t e^{(t-s)\hat{d}_k\Delta}[\hat{u}_i(s)(m - \sum_{j=1}^N \hat{u}_j(s))]\,ds,\\
    u_i(t) &= e^{t\hat{d}_k\Delta }(u_0)_i + \int_0^t e^{(t-s)\hat{d}_k\Delta}[u_i(s)(m - \sum_{j=1}^N u_j(s))]\,ds  \\
    &\qquad + (d_i - \hat{d}_k) \int_0^t e^{(t-s)\hat{d}_k\Delta} \Delta u_i(s)\,ds 
\end{cases}
\end{equation}
Denoting $\hat{U} = \mathcal{P}\hat{u}$, $U = \mathcal{P} u$, and $W$ as follows %i.e.%$W=\mathcal{P}\left[ \frac{\hat{d}_k-d_i}{\ep}u_i\right]$ i.e.
$$
\hat{U}_k = \sum_{i \in I_k} \hat{u}_i,\quad U_k=\sum_{i\in I_k} u_i,\quad W_k=\sum_{i \in I_k} \frac{d_i-\hat{d}_k }{{{\ep}}} u_i, \quad \text{ for }1 \leq k \leq {N_0},
$$
and then adding \eqref{eq:I2b} over $i\in I_k$, we deduce
\begin{equation}\label{eq:I2c}
\begin{cases}
    \hat{U}_k(t) &= e^{t\hat{d}_k\Delta }(U_0)_k + \int_0^t e^{(t-s)\Delta}\big[\hat{U}_k(s)(m -\sum_{\ell=1}^{N_0} \hat{U}_\ell(s))\big]\,ds,\\
    U_k(t) &= e^{t\hat{d}_k\Delta }(U_0)_k + \int_0^t e^{(t-s)\Delta}\big[U_k(s)(m -\sum_{\ell=1}^{N_0} {U}_\ell(s))\big]\,ds  \\
    &\qquad + {{\ep}}  \int_0^t e^{(t-s)\hat{d}_k\Delta} \Delta W_k(s)\,ds 
\end{cases}
\end{equation}
Denoting $Q_k(t) = U_k(t) - \hat{U}_k(t)$, then by subtracting the 
first equation of \eqref{eq:I2c} from the second equation, we have 
\begin{align*}
Q_k(t)&= \int_0^t e^{(t-s)\hat{d}_k\Delta}\bigg[U_k(s)(m - \sum_{\ell=1}^{N_0} U_\ell(s))-\hat{U}_k(s)(m - \sum_{\ell=1}^{N_0} \hat{U}_\ell(s))\bigg] \,ds \\
& \quad + {{\ep}} \int_0^t e^{(t-s)\hat{d}_k\Delta} \Delta W_k(s)\,ds\\
&:= I_1(k) + {I_2(k)}.
\end{align*}
%Now, by the first estimate of \eqref{eq:I2a}, we have
Now, {by \eqref{eq:1.7a},} we have
$$
\sum_{k=1}^{{N_0}}\|I_1(k)\| \leq C_1 \int_0^t \sum_{k=1}^{N_0}\|Q_k(s)\|\,ds,
$$
where we have used the uniform boundedness of trajectories in $X$. Moreover, by \eqref{eq:1.7b} of 
Corollary \ref{cor:inv_K}, we have 
\begin{align*}
\sum_{k=1}^{{N_0}}    \|I_2(k)\| &\leq\sum_{k=1}^{{N_0}}\sum_{i \in I_k} |d_i-\hat{d}_k  | \int_0^t \| e^{(t-s)\hat{d}_k\Delta} (-\Delta)^{1/2}\| \|(-\Delta)^{1/2} u_i(s)\|\,ds\\
    &\leq \sum_{i=1}^N \ep \int_0^t C_T (t-s)^{-1/2} \left\|(-\Delta)^{1/2} u_i(s)\right\|\,ds\\
    &\leq {{\ep}} C_T\int_0^t (t-s)^{-1/2} \sum_{i=1}^{N} \left\|(-\Delta)^{1/2} u_i(s)\right\|\,ds
\end{align*}
where the constant $C_T$ can be chosen to be uniform for $t \in [0,T]$. Note that we used 
$\sum_{k=1}^{N_0} \sum_{i\in I_k} = \sum_{i=1}^N$ 
and  that $\left\| e^{(t-s)\hat{d}_k\Delta} (-\Delta)^{1/2}\right\| \leq C_T (t-s)^{-1/2}$ (see \cite[Chapter 2]{Lunardi}) to derive the first inequality. 
% Now, for $t \in [0,T]$,\footnote{$\int_0^t (t-s)^{-1/2} s^{-1/2}\,ds = \pi$ and is independent of $t>0$.}\footnote{$\int_0^t(t-s)^{-1/2}\,ds \leq \int_0^T s^{-1/2}\,ds = 2T^{1/2}.$}
Then,
\begin{align*}
    \sum_{k=1}^{N_0} \|Q_k(t)\| &\leq C_T\left[ \int_0^t \sum_{k=1}^{N_0} \|Q_k(s)\|\,ds + {{\ep}} \int_0^t (t-s)^{-1/2} \sum_{i=1}^N \left\|(-\Delta)^{1/2} u_i(s)\right\|\,ds \right]\\
    &\leq C'_T \left[ \int_0^t \sum_{k=1}^{N_0} \|Q_k(s)\|\,ds + {{\ep}} \int_0^t (t-s)^{-1/2} (1 + s^{-1/2})\,ds \right]\\
    %&\leq C'_T \left[ \int_0^t \sum_{k=1}^{N_0} \|Q_k(s)\|\,ds + {{\ep}}(2T^{1/2} + \pi)  \right]\\
    &\leq  C''_T \left[\int_0^t \sum_{k=1}^{N_0} \|Q_k(s)\|\,ds +  \ep\right], 
\end{align*}
%where we used \eqref{eq:I2a}. Hence,
{where we used \eqref{eq:1.7b} for the second inequality.} Hence,
by the Gronwall's inequality, we have
\begin{equation}
\sup_{0 \leq t \leq T} \sum_{k=1}^{N_0}\|Q_k(t)\| \leq {{\ep}} C''_T e^{TC''_T } = {{\ep}} C'''_T.
\end{equation}
This proves Proposition \ref{prop:stepthree}.
\end{proof}

\section{Rough estimates for the perturbed semiflow}\label{sec:4}

\begin{definition}
Let $d>0$ and $\hat{h} \in L^\infty(\Omega)$, and define $\hat\mu(d,h)$ to be the principal eigenvalue of
\begin{equation}\label{eq:4.1c}
d \Delta \hat\psi  + \hat{h}(x) \hat\psi + \hat\mu \hat\psi = 0 \,\, \text{ in }\Omega,\,\,\text{ and }\,\, \partial_\nu \hat\psi = 0 \,\,\text{ on }\,\partial\Omega.
\end{equation}
\end{definition}
\begin{lemma}\label{lem:mmoonn}
Let $\partial_d  \mu_1$ be the derivative of $\mu_1$ with respect to $d$, then $\partial_d \mu_1 \geq 0$. Furthermore, if $\hat{h}$ is non-constant, then $\partial_d \mu_1 >0$ for all $d>0$.
\end{lemma}
\begin{proof}
 Indeed, if we differentiate \eqref{eq:4.1c} with respect to $d$ (and denote the derivative as $'$), we get
$$
\begin{cases}
-d \Delta \hat\psi' - \hat{h} \hat\psi' - \Delta \hat\psi = \hat\mu \hat\psi' + \hat\mu' \hat\psi &\text{ in }\Omega,\\
\partial_\nu \hat\psi' = 0 &\text{ on }\partial\Omega.
\end{cases}
$$
Multiplying the above by $\hat\psi$ and integrating by parts, we obtain
$$
0 \leq \int_\Omega |\nabla \hat\psi|^2\,dx = \hat\mu' \int_\Omega |\hat\psi|^2\,dx.
$$
Finally, note that the strict inequality follows from the fact that $\hat{h}$ is non-constant, so that $\hat\psi$ is also non-constant.
\end{proof}

% \begin{lemma}\label{lem:3.8}
% $\mu_1(d_1,m) <0$ and $\mu_1(d_1,m - \theta_{\hat{d}_{k_0})$
% \end{lemma}
% \begin{proof}

% \end{proof}

The following is adapted from \cite[Lemma 3.9]{Dockery1998}.
\begin{lemma}\label{lem:3.9}
Fix $(\hat{d}_k)_{k=1}^{N_0}$. There exists $\delta>0$ such that for any $\ep \in (0, \min\{\hat{d}_1/2, (\hat{d}_2-\hat{d}_1)/2\})$, and any $u_0 \in \textup{Int}\,K^+$, the omega limit set $\omega(u_0,\varphi_\ep)$ of $u_0$ under the semiflow of \eqref{eq:Pep} satisfies
$$
\mathcal{P}\omega(u_0,\varphi_\ep) \not\subset B_{\delta}(M(k_0)) \quad \text{ for any }k_0 \in \{2,...,{N_0}+1\}.
$$
\end{lemma}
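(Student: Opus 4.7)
The plan is to argue by contradiction using the classical ``invasibility'' criterion via the principal eigenvalue $\mu_1$. Fix any $k_0\in\{2,\dots,K+1\}$ (it then suffices to produce a separate threshold $\delta_{k_0}>0$ for each such $k_0$ and take $\delta:=\min_{k_0}\delta_{k_0}$). Set $g_{k_0}:=m-\theta_{\hat{d}_{k_0}}$ with the convention $\theta_{\hat{d}_{K+1}}:=0$. Since $m$ is nonconstant with $\int_\Omega m\geq 0$, the classical monotonicity of $\mu_1(\cdot,g_{k_0})$ gives $\mu_1(\hat{d}_1,g_{k_0})<0$: for $k_0\leq K$ this follows from $\hat{d}_1<\hat{d}_{k_0}$ together with the fact that $d\mapsto\mu_1(d,m-\theta_{d'})$ is strictly increasing on $(0,d')$ and vanishes at $d=d'$ (with eigenfunction $\theta_{d'}$); for $k_0=K+1$ this is because the Rayleigh quotient at the constant test function equals $-|\Omega|^{-1}\int_\Omega m\leq 0$ and cannot be the minimum since $m$ is nonconstant. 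By continuous dependence of the principal eigenpair on the diffusion coefficient, one can shrink the admissible $\ep$ and find $c_{k_0}>0$ and constants $0<p_0\leq P_0$ such that
$$
\mu_1(d,g_{k_0})\leq -2c_{k_0}\qquad\text{and}\qquad p_0\leq \psi_{d,k_0}\leq P_0 \text{ on }\overline\Omega
$$
for every $d\in(\hat{d}_1-\ep,\hat{d}_1+\ep)$, where $\psi_{d,k_0}>0$ is the associated principal eigenfunction (normalized so that the upper/lower bounds are uniform).

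Now suppose for contradiction that some $u_0\in\textup{Int}\,K^+$ satisfies $\mathcal{P}\omega(u_0,\varphi_\ep)\subset B_\delta(M(k_0))$, where $\delta$ will be chosen below. Pick any $i_0\in I_1$ and write $\psi:=\psi_{d_{i_0},k_0}$. By definition of the omega limit set and the aggregation \eqref{eq:agg}, there exists $T_0>0$ such that for all $t\geq T_0$,
$$
\|U_{k_0}(\cdot,t)-\theta_{\hat{d}_{k_0}}\|_{C(\overline\Omega)}<2\delta, \quad \|U_k(\cdot,t)\|_{C(\overline\Omega)}<2\delta \text{ for }k\neq k_0,
$$
whence $\|\sum_j u_j(\cdot,t)-\theta_{\hat{d}_{k_0}}\|_{C(\overline\Omega)}\leq 2K\delta$ for $t\geq T_0$. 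Define $H(t):=\int_\Omega u_{i_0}(\cdot,t)\psi\,dx$. Multiplying the equation for $u_{i_0}$ by $\psi$, integrating by parts (Neumann conditions on both match), and using $d_{i_0}\Delta\psi=-g_{k_0}\psi-\mu_1(d_{i_0},g_{k_0})\psi$, one obtains
$$
\frac{H'(t)}{H(t)}=-\mu_1(d_{i_0},g_{k_0})+\frac{\int_\Omega u_{i_0}\psi\bigl[\theta_{\hat{d}_{k_0}}-\sum_j u_j\bigr]\,dx}{H(t)}\geq 2c_{k_0}-2K\delta,
$$
using $u_{i_0}\psi\geq 0$ pointwise to absorb the sign of the error term. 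Fixing $\delta:=c_{k_0}/(2K)$ gives $H'(t)/H(t)\geq c_{k_0}$ for all $t\geq T_0$.

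Since $u_0\in\textup{Int}\,K^+$, the strong maximum principle applied to the linear parabolic equation $\partial_t u_{i_0}-d_{i_0}\Delta u_{i_0}=(m-\sum_j u_j)u_{i_0}$ (whose coefficients are bounded thanks to Corollary \ref{cor:inv_K}) yields $u_{i_0}(\cdot,T_0)>0$ on $\overline\Omega$, hence $H(T_0)>0$. Integrating the differential inequality gives $H(t)\geq H(T_0)e^{c_{k_0}(t-T_0)}\to\infty$, which contradicts the uniform $L^\infty$ bound $\|u_{i_0}(\cdot,t)\|_{C(\overline\Omega)}\leq C_0$ from Corollary \ref{cor:inv_K}. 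Taking $\delta:=\min_{k_0\in\{2,\dots,K+1\}}c_{k_0}/(2K)$ finishes the argument. The main technical obstacle I anticipate is organizing the uniform spectral data $(c_{k_0},p_0,P_0)$ across the one-parameter family $d\mapsto(\mu_1(d,g_{k_0}),\psi_{d,k_0})$ in a way that does not depend on $N$ or on the particular index $i_0\in I_1$ chosen; this is standard once one invokes continuous dependence of the principal eigenpair on a compact interval of diffusion coefficients, but it must be stated carefully because $N$ varies with $\ep$.
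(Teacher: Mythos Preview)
Your proof is correct and follows essentially the same strategy as the paper: both arguments assume for contradiction that the trajectory is eventually trapped near $M(k_0)$, establish that $\mu_1(d_1,m-\theta_{\hat d_{k_0}})<0$ via monotonicity in $d$ (for $k_0\le K$) or the integral trick (for $k_0=K+1$), and then use the associated eigenfunction to exhibit exponential growth of the slowest component. The only difference is in the final step: the paper builds a pointwise subsolution $\underline u_1(x,t)=2\delta e^{-\lambda(t-t_0)}\psi(x)$ and compares directly, whereas you track the integral quantity $H(t)=\int_\Omega u_{i_0}\psi$ and derive an ODE inequality; these are interchangeable standard devices. One minor remark: you should not phrase the spectral uniformity as ``shrink the admissible $\ep$'', since the statement already fixes the range $\ep<\min\{\hat d_1/2,(\hat d_2-\hat d_1)/2\}$; rather, that range guarantees $d_{i_0}\le (\hat d_1+\hat d_2)/2<\hat d_{k_0}$, which together with compactness yields the uniform constant $c_{k_0}$ you need without further restriction.
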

\begin{proof}
Suppose to the contrary that there is $k_0 \in \{2,...,{N_0}+1\}$ such that for each $\delta>0$, there is $T_0\geq 1$ such that the solution $(u_i)_{i=1}^N=\varphi_\ep(t, u_0)$ satisfies
%such that $\mathcal{P}\omega(u_0,\varphi_\ep) \not\subset B_{\delta}(M(k_0))$. 
\begin{equation}\label{eq:uu?}
\mathcal{P}\varphi_\ep([T_0,\infty), u_0) \subset B_\delta(M(k_0)).
\end{equation}
Define, $h(x,t):=m(x) - \sum_{j=1}^N u_j(x,t)$ and  
$$
h_\delta(x):= \inf_{(\hat{v}_k) \in B_\delta(M(k_0))} (m(x) - \sum_{k=1}^{N_0} \hat{v}_k(x)), 
$$
then $h(x,t) \geq h_\delta(x)$ for all $t \geq T_0$. We claim that
$\mu_1(d_1,h_\delta) <0$ for all sufficiently small $\delta$. 

We first discuss the case $2\leq k_0 < {N_0}+1$. By continuity, it suffices to show that $\mu_1(d_1,m-\theta_{\hat{d}_{k_0}}) <0$. Since $d_1 < \hat{d}_{k_0}$ (as $d_1 < \hat{d}_1 + \ep < \hat{d}_2 \leq  \hat{d}_{k_0}$), we can apply the classical fact that $\mu_1$ is strictly increasing in $d$ (Lemma \ref{lem:mmoonn}), to deduce that
$$
\mu_1(d_1,m-\theta_{\hat{d}_{k_0}}) < \mu_1(d_{\hat{d}_{k_0}},m-\theta_{\hat{d}_{k_0}}) = 0, 
$$
where the last equality holds since $0$ is the eigenvalue with a positive eigenfunction $\theta_{\hat{d}_{k_0}}(x)$. 

In case $k_0 = {N_0}+1$, it suffices to show that $\mu_1(d_1,m) < 0$. Indeed, if $\psi >0$ is the principal eigenfunction of $\mu_1(d_1,m)$, then
$$
d_1 \Delta \psi + m\psi + \mu_1(d_1,m) \psi = 0 \,\,\text{ in }\Omega,\, \text{ and }\, \partial_\nu \psi = 0 \, \text{ on }\partial\Omega.
$$
Dividing the above by $\psi$ and integrate by parts, we have
\begin{equation}\label{eq:hhhold}
d_1 \int_\Omega \frac{|\nabla \psi|^2}{\psi^2} \,dx + \int_\Omega m \,dx + \mu_1(d_1,m) |\Omega| = 0
\end{equation}
Since (i) $\int_\Omega m\,dx \geq 0$ and that (ii) $m(x)$, and thus $\psi(x)$, is non-constant, we deduce from \eqref{eq:hhhold} that $\mu_1(d_1,m)<0$.

In conclusion, there exists $\delta>0$ such that the principal eigenvalue $\lambda$ of
$$
d_1 \Delta \psi  + h_\delta(x) \psi + \lambda \psi = 0 \,\, \text{ in }\Omega,\,\,\text{ and }\,\, \partial_\nu \psi = 0\,\,\text{ on }\,\partial\Omega.
$$
is negative. 
We also normalize the corresponding positive eigenfunction $\psi$ to insure $\inf_\Omega \psi = 1$. 
Now, since $h(x,t)\geq h_\delta(x)$, we can show that
$u_1(x,t)$ and
$$
 \underline{u}_1(x,t):= 2\delta e^{-\lambda (t-t_0)}\psi(x),
$$
where $t_0 \in [T_0,\infty)$ is to be specified,
together form a pair of super- and sub-solutions of the linear parabolic equation
$$
\partial_t w - d_1 \Delta w = h(x,t) w  \quad \text{ in }\Omega \times [T_0,\infty),
$$
under the Neumann boundary condition. By taking $t_0 \geq T_0$ sufficiently large, we have also $u_1(x,T_0) \geq \underline{u}_1(x,T_0)$. By the method of sub- and super-solutions, $u_1(x,t) \geq \underline{u}_1(x,t)$ for all $t \geq T_0$. However, when $t=t_0$, we have 
$$
u_1(x,t_0) \geq 2\delta \psi(x) \geq 2\delta \quad \text{ for all }x \in \Omega,
$$
but this contradicts \eqref{eq:uu?} when $t=t_0$.
\end{proof}

{The next result is inspired by the proof of \cite[Theorem 4.2]{Dockery1998}.}
\begin{proposition}\label{prop:1.14}
Given $(\hat{d}_k)_{k=1}^{N_0} \in \mathcal{D}$ and a sufficiently small $r>0$, then there exists $\ep>0$ such that for any $N \geq N_0$ and $(d_i)_{i=1}^N$ such that \eqref{eq:pert} holds, and any {positive} solution $u$ of \eqref{eq:Pep}, we have 
%with initial data $u_0 \in \textup{Int}\,K^+$, we have
\begin{equation}\label{eq:cor2.1}
\limsup_{t \to \infty}\left\| \sum_{i =1}^N u_i(\cdot,t) - \theta_{\hat{d}_1} \right\|< 2r,
\end{equation}
\end{proposition}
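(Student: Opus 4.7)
The plan is to combine three ingredients developed in the preceding sections: the complete Lyapunov function $V$ of the unperturbed $K$-species semiflow $\Phi$ (Section \ref{sec:2}), the $\ep$-uniform closeness of $\mathcal{P}\varphi_\ep$ to $\mathcal{P}\varphi=\Phi\circ\mathcal{P}$ on bounded time intervals (Proposition \ref{prop:stepthree} together with Remark \ref{rmk:1}), and the obstruction supplied by Lemma \ref{lem:3.9}, which prohibits $\mathcal{P}\omega(u_0,\varphi_\ep)$ from being trapped in any small neighborhood of $M(k)$, $k\geq 2$. Because $M(1)=(\theta_{\hat{d}_1},0,\ldots,0)$, it suffices to prove $\mathcal{P}\omega(u_0,\varphi_\ep)\subset\overline{B_{2r}(M(1))}$. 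I first shrink $r>0$ so that $\{B_{2r}(M(k))\}_{k=1}^{K+1}$ are pairwise disjoint, lie in the domain $\mathcal{U}'$ of $V$, and satisfy $\|\sum_k P_k-\theta_{\hat{d}_1}\|<2r$ throughout $B_{2r}(M(1))$; also take $\delta_0<r$ from Lemma \ref{lem:3.9}.

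Set $\mu=\tfrac12\min_{j\ne k}|V(M(j))-V(M(k))|$ and apply Lemma \ref{lem:1.5} with $r$ and $\mu$ to obtain $T>0$ and a neighborhood $\mathcal{U}$ of $\textup{Inv}\,K^+$: any $\Phi$-orbit that remains in $\mathcal{U}\setminus\bigcup_k B_r(M(k))$ for time $T$ loses at least $\mu$ in $V$. Using the parabolic regularity of Corollary \ref{cor:inv_K} and uniform continuity of $V$ on the compact absorbing set it provides, choose $\eta\in(0,r/4)$ so small that an $\eta$-neighborhood perturbation in $X$ alters $V$ by at most $\mu/4$. Proposition \ref{prop:stepthree} then produces $\ep_1$ such that $\|\mathcal{P}\varphi(t,v)-\mathcal{P}\varphi_\ep(t,v)\|<\eta$ for every $t\in[0,T]$ and $v\in\varphi(1,\mathcal{N})\cup\varphi_\ep(1,\mathcal{N})$. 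Fix $\ep\in(0,\ep_1)$ and $u_0\in\textup{Int}\,K^+$, write $u(t):=\varphi_\ep(t,u_0)$ and $V_n:=V(\mathcal{P}u(nT))$. Remark \ref{rmk:1} and the monotonicity of $V$ along $\Phi$ give the quasi-monotonicity $V_{n+1}\le V_n+\mu/4$ for every $n$, with the strict drop $V_{n+1}\le V_n-\mu/2$ whenever $\Phi([0,T],\mathcal{P}u(nT))\subset\mathcal{U}\setminus\bigcup_k B_r(M(k))$. Since $V$ is bounded on the absorbing set, the latter can happen only finitely many times, so for $n\geq n_0$ the window $\mathcal{P}u([nT,(n+1)T])$ meets $\bigcup_k B_r(M(k))$.

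The delicate step --- the main obstacle --- is to identify the limit $V_\infty:=\lim V_n$ with $V(M(1))$. For any subsequence along which $\mathcal{P}u(n_jT)\to q$, iterating Proposition \ref{prop:stepthree} window-by-window produces, in the limit $\ep\to 0$, a full $\Phi$-trajectory through $q$ on which $V$ is constant; by the strict $V$-drop property of $\Phi$ off the Morse sets (Lemma \ref{lem:1.5}), this limit trajectory must lie in a single Morse set, forcing $V_\infty\in\{V(M(k))\}_{k=1}^{K+1}$. Lemma \ref{lem:3.9} excludes $k\geq 2$, so $V_\infty=V(M(1))$ and hence $\mathcal{P}u(nT)\in B_r(M(1))$ for all large $n$. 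The bookkeeping must be done with care so that the cumulative $V$-drops controlled in the previous paragraph are not swamped by the $\eta$-perturbation error; this is why $\eta$ is chosen with $\eta<\mu/4$.

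Finally, $M(1)$ is the unique attractor at the bottom of the Morse decomposition of $\Phi$ on $\textup{Int}\,K^+$ (cf.\ Proposition \ref{prop:1.1}), so a sufficiently small neighborhood of $M(1)$ is forward invariant under $\Phi$; Proposition \ref{prop:stepthree} (after possibly shrinking $\ep_1$) then keeps the projected perturbed orbit inside $B_{2r}(M(1))$ from the first entry into $B_r(M(1))$ onwards. Combined with the choice of $r$ in the first paragraph, this yields
$$\limsup_{t\to\infty}\Big\|\sum_{i=1}^N u_i(\cdot,t)-\theta_{\hat{d}_1}\Big\|<2r,$$
completing the proof.
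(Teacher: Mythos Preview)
Your overall strategy matches the paper's: use the complete Lyapunov function $V$ for $\Phi$, transfer information to $\varphi_\ep$ via Proposition~\ref{prop:stepthree}, and invoke Lemma~\ref{lem:3.9} to rule out $k_0\ge 2$. However, there are two genuine gaps.

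\textbf{Gap 1: the ``$\ep\to 0$'' in the delicate step.} You write ``Fix $\ep\in(0,\ep_1)$'' and then, a few lines later, ``in the limit $\ep\to 0$, a full $\Phi$-trajectory through $q$''. These are incompatible: once $\ep$ is fixed, the $\omega$-limit set of $\varphi_\ep$ has no reason to project onto a $\Phi$-invariant set, and $V$ need not be constant on $\mathcal P\omega(u_0,\varphi_\ep)$. The paper separates the two roles of $\ep$ cleanly. First, for \emph{each fixed} small $\ep$ it shows (its Claim~\ref{claim:2correct}) that $V(\mathcal P\varphi_\ep(t,u_0))$ is eventually confined to an interval $(k_0-\tfrac12,k_0+\tfrac34)$ for some integer $k_0$ depending on $u_0$ and $\ep$; this is a level-set crossing argument (Claim~\ref{claim:1correct}) rather than your discrete sequence $V_n$, and it does not require $\lim V_n$ to exist. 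Second, to prove $\mathcal P\omega(u_0,\varphi_\ep)\subset B_{2r}(M(k_0))$ (its Claim~\ref{claim:3correct}), the paper argues \emph{by contradiction over a sequence $\ep\to 0$}: if the inclusion failed along $\ep_j\to 0$, one could extract a limit point lying on a genuine full $\Phi$-trajectory with $V$ trapped in $(k_0-\tfrac12,k_0+\tfrac34)$, forcing it into $M(k_0)$ and giving a contradiction. This is how the threshold $\ep_3$ is produced. Your argument collapses these two steps and loses the logical structure.

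\textbf{Gap 2: the final trapping argument.} You assert that a small neighborhood of $M(1)$ is forward invariant for $\Phi$, and that Proposition~\ref{prop:stepthree} then keeps $\mathcal P\varphi_\ep(t,u_0)$ in $B_{2r}(M(1))$ forever after first entry into $B_r(M(1))$. But Proposition~\ref{prop:stepthree} controls the deviation only on windows of length $T$; iterating over infinitely many windows allows the $\eta$-errors to accumulate, and nothing you have written prevents the projected orbit from drifting out of $B_{2r}(M(1))$. One could repair this with a contraction estimate near the hyperbolic sink $M(1)$, but that is additional work you have not supplied. The paper sidesteps the issue entirely: once Claim~\ref{claim:3correct} gives $\mathcal P\omega(u_0,\varphi_\ep)\subset B_{2r}(M(k_0))$ directly, Lemma~\ref{lem:3.9} forces $k_0=1$, and \eqref{eq:cor2.1} follows immediately from the definition of the $\omega$-limit set---no forward-invariance argument is needed.

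A minor point: your quasi-monotonicity ``$V_{n+1}\le V_n+\mu/4$'' presumes $\mathcal P u(nT)\in\mathcal U'$ so that $V$ is defined; the paper secures this via its Claim~\ref{claim1}, showing $\mathcal P\,\textup{Inv}\,K^+_\ep\subset\mathcal U$ uniformly in $\ep$, which you should also invoke.
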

\begin{proof}
It suffices to prove \eqref{eq:cor2.1} for sufficiently small $r>0$. 
Let $\varphi$ (resp. $\varphi_\ep$) be the semiflow operator corresponding to \eqref{eq:P0} (resp. \eqref{eq:Pep}), and denote its maximal bounded invariant set to be 
{$\textup{Inv}\,K^+$} (resp. $\textup{Inv}\,K^+_\ep$). 
Let $V$ be the Lyapunov function that is given in Section \ref{sec:2}.
Fix $\mu=3/4$ and  $r \in (0, \min\{1/4,\delta/2\})$ small enough (with $\delta$ given by Lemma \ref{lem:3.9}) so that 
\begin{equation}\label{eq:ball}
|V(U) - V(\tilde{U})| < \frac{1}{4}  \quad \text{ if }U,\tilde{U} \in B_{2r}(M(k))\,\text{ for some }1 \leq k\leq {N_0}+1.
\end{equation}
Since $V(M(k))=k$, it follows that 
\begin{equation}\label{eq:ball2}
V(B_{2r}(M(k))) \subset \left( k - \frac{1}{4}, k+\frac{1}{4}\right) \quad \text{ for each }k.
\end{equation}
%given $U_0 \in B_{2r}(M(p))$ and $U'_0 \in B_{2r}(M(q))$ with $p<q$, we have $V(U_0) - V(U'_0)<-1/2$.}
Having chosen $\mu$ and $r$, we then choose
$T>1$ and $\mathcal{U}$ so that the conclusion of 
Lemma \ref{lem:1.5}
holds. (Since $\textup{Inv}\,K^+ \cup \textup{Inv}\,K^+_\ep \subset \mathcal{N}$, we can also assume that $\mathcal{U} \subset \mathcal{N}.$)
\begin{claim}\label{claim1}
There exists $\ep_1 \in (0,\hat{d}_1/2)$ such that for all $\ep \in (0,\ep_1)$ and any $N\geq N_0$ and $(d_i)_{i=1}^N$ satisfying \eqref{eq:pert}, we have
$$
(\mathcal{P}\textup{Inv}\,K^+) \cup (\mathcal{P}\textup{Inv}\,K^+_\ep) \subset \mathcal{U}.
$$
\end{claim}
{Let ${F}_\ep:= \varphi_\ep(1,\mathcal{N})$. 
By Lemma \ref{lem:1.7}, we can choose a compact set ${F}_0$ in $[C(\overline\Omega)]^{N_0}$ such that
$\mathcal{P} F_\ep \subset F_0$.
Since $\mathcal{U}$ is a neighborhood of the maximal bounded invariant set of the ${N_0}$-species problem \eqref{eq:p0}, there exists a finite time $T_0 \ge 1$ such that $\Phi(T_0, F_0) \subset \mathcal{U}$. We emphasize that $F_0$ and $T_0$ can be chosen uniformly over all $N \geq N_0$ and $(d_i)_{i=1}^N$ satisfying \eqref{eq:pert}. }

{By Remark \ref{rmk:1} and $\mathcal{P} F_\ep \subset F_0$, we have 
$$
\mathcal{P}\varphi(T_0,F_\ep) = \Phi(T_0,\mathcal{P}F_\ep) \subset \Phi(T_0,F_0) \subset \mathcal{U}.
$$
Also, since $\Phi(T_0,F_0)$ is compact (as $F_0$ is compact) and $\mathcal{U}$ is open, there exists $\eta>0$ (again independent of $N$ and $(d_i)_{i=1}^N$) such that $$\textup{dist}(\Phi(T_0,F_0), [C(\overline\Omega)]^{N_0} \setminus \mathcal{U})\geq \eta.$$ We can then apply Proposition \ref{prop:stepthree} to choose $\ep_1>0$ so small  that for any
{$\ep\in(0,\ep_1)$,} $N  \geq N_0$ and $(d_i)_{i=1}^N$ satisfying \eqref{eq:pert},  we have
$\mathcal{P}\varphi_\ep(T_0,F_\ep)  \subset \mathcal{U}.$
And hence, 
$$\mathcal{P}\textup{Inv}\,K^+_\ep \subset  \mathcal{P}\varphi_\ep(T_0+1,\mathcal{N}) =\mathcal{P}\varphi_\ep(T_0,F_\ep)   \subset \mathcal{U},$$
where the first inclusion is due to the last part of Lemma \ref{lem:L1}, while the equality is due to $F_\ep = \varphi_\ep(1,\mathcal{N})$ and semigroup property. This proves the claim.}

Next, by Proposition \ref{prop:stepthree}, there exists $\ep_2\in (0,\ep_1)$ such that for all $\ep \in (0,\ep_2)$ and $u'_0 \in \varphi(1,\mathcal{N})\cup \varphi_\ep(1,\mathcal{N})$, 
we have 
(recall the choice of $T$ in the beginning of the proof)
\begin{equation}\label{eq:1.21a}
\sup_{0 \leq t \leq 2T}\left\| \mathcal{P}
    \varphi(t,u'_0) - \mathcal{P}\varphi_\ep(t,u'_0)\right\| < r,
\end{equation}
and, provided $\mathcal{P}\varphi([0,2T],u'_0) \subset \mathcal{U}$ and $\mathcal{P}\varphi_\ep([0,2T],u'_0) \subset \mathcal{U}$, that 
\begin{equation}\label{eq:1.21}
\sup_{0 \leq t \leq 2T}\left\| V(\mathcal{P}
    \varphi(t,u'_0)) - V(\mathcal{P}\varphi_\ep(t,u'_0))\right\| < \frac{\mu}{3}.
\end{equation}

Now, fix an arbitrary trajectory $\varphi_\ep(t,u_0)$ of \eqref{eq:Pep} with initial data $u_0$ {being an interior point of $K^+$.}
%
%$u_0 \in \textup{Int}\,K^+$.
We will show \eqref{eq:cor2.1}. By Claim \ref{claim1}, we may perform a translation in time and assume without loss of generality that
$$
\mathcal{P}\varphi(t,u_0) \in \mathcal{U}, \quad \text{ and }\quad \mathcal{P}\varphi_\ep(t,u_0) \in \mathcal{U} \quad \text{ for all }t \geq 0.
$$
\begin{claim}\label{claim:1correct}
Let $\ep \in (0,\ep_2)$. 
Suppose there is $t_1\geq 1$ such that 
\begin{equation}\label{eq:1.26b}
V(\mathcal{P}\varphi_\ep(t_1,u_0))=k_1 + \frac{1}{2}, \quad \text{ for some }\, k_1\in \{1,...,{N_0}+1\},
\end{equation}
then 
\begin{equation}\label{eq:claim1correct1}
\sup_{t_1<t<t_1+T} V(\mathcal{P}\varphi_\ep(t,u_0))< k_1 + \frac{3}{4},
\end{equation}
and
\begin{equation}\label{eq:claim1correct12}
V(\mathcal{P}\varphi_\ep(t_2,u_0)) \leq k_1 + \frac{1}{2} \quad \text{ for some }t_2 \in (t_1,t_1+T].
\end{equation}
\end{claim}
Denote $u'_0= \varphi_\ep(t_1,u_0)$. Then \eqref{eq:ball} and \eqref{eq:1.26b} imply $\mathcal{P}u'_0 \not\in \cup_{k=1}^{{N_0}+1}B_r(M(k))$. Then, we have
$$
V(\mathcal{P}\varphi_\ep(t,u'_0)) <V(\mathcal{P}\varphi(t,u'_0)) + \frac{\mu}{3}\leq V(\mathcal{P}u'_0) + \frac{\mu}{3} = k_1 + \frac{3}{4} \quad \text{ for }t \in [0,T],
$$
where the first inequality is due to \eqref{eq:1.21} and the second one is due to $\mathcal{P}\varphi(t,u'_0) = \Phi(t,\mathcal{P}u'_0)$ and the property of Lyapunov function. 
This proves \eqref{eq:claim1correct1}. 

Next, we show \eqref{eq:claim1correct12} by dividing into two cases: \begin{itemize}
    \item[{\rm(i)}] $\mathcal{P}\varphi([0,T],u'_0) \cap \left[ \cup_{k=1}^{{N_0}+1}B_r(M(k))\right] = \emptyset$
    \item[{\rm(ii)}] $\mathcal{P}\varphi([0,T],u'_0) \cap \left[ \cup_{k=1}^{{N_0}+1}B_r(M(k))\right] \neq \emptyset$
\end{itemize}
In case (i), we use \eqref{eq:1.21} and then 
Lemma \ref{lem:1.5}
%Corollary \ref{cor:1.7} 
to obtain
$$
V(\mathcal{P}\varphi_\ep(T,u'_0)) < V(\mathcal{P}\varphi(T,u'_0)) + \frac{\mu}{3} < V(\mathcal{P}u'_0) - \frac{2\mu}{3} = k_1.  
$$
In case (ii), %\eqref{eq:decreasing} implies
there is an integer $k'_1$ and $t'_1 \in (0,T]$ such that $\mathcal{P}\varphi(t'_1,u'_0) \in B_r(M(k'_1))$. Furthermore, since $V$ is decreasing along trajectories of $\Phi = \mathcal{P}\varphi$, we have $k'_1 \leq k_1$.
Then,  
%\eqref{eq:ball} and \eqref{eq:1.21} implies
$$
V(\mathcal{P}\varphi_\ep(t'_1,u'_0)) < V(\mathcal{P}\varphi(t'_1,u'_0)) + \frac{\mu}{3} < k'_1 + \frac{1}{4} + \frac{\mu}{3} \leq k_1 + \frac{1}{2},
$$
where the first and second inequalities are due to \eqref{eq:1.21} and \eqref{eq:ball}, respectively. 
This proves \eqref{eq:claim1correct12} and completes the proof of Claim \ref{claim:1correct}.

\begin{claim}\label{claim:2correct}
For $\ep\in(0,\ep_2)$, there exists $k_0$ and $T_\ep$ such that
\begin{equation}\label{eq:claim:2correct}
k_0-\frac{1}{2} < V(\mathcal{P} \varphi_\ep(t,u_0))< k_0 + \frac{3}{4} \quad \text{ for all }t \geq T_\ep.
\end{equation}
% Furthermore, for each $n \in \mathbb{N}$, we have 
% \begin{equation}\label{eq:claim:2correcta}
% [\mathcal{P}\varphi_\ep([T_\ep+2(n-1)T, T_\ep + 2nT],u_0)] \cap B_{2r}(M(k_0)) \neq \emptyset.
% \end{equation}
\end{claim}
\begin{remark}
It follows from \eqref{eq:ball2} and \eqref{eq:claim:2correct} 
that for $t \geq T_\ep$, we have $\mathcal{P}\varphi_\ep(t,u_0) \not\in B_{2r}(M(k))$ for any $k \neq k_0$.
\end{remark}

%First, we prove \eqref{eq:claim:2correct}. 
Define 
$$
k_0 = \min\left\{ k \in \mathbb{N}\,:\, V(\mathcal{P}\varphi_\ep(t_0,u_0) )\leq k+\frac{1}{2}\,\text{ for some }t_0 \geq 1\right\}.$$ 
By construction, $V(\mathcal{P} \varphi_\ep(t,u_0)) > (k_0-1) + 1/2$ for all $t \geq 1$ and
the lower bound of \eqref{eq:claim:2correct} holds. Moreover, there is $T_\ep\geq 1$ such that $V(\mathcal{P}\varphi_\ep(T_\ep,u_0)) \leq k_0 + \frac{1}{2}$. Denote, for simplicity, $u'_0=\varphi_\ep(T_\ep,u_0)$. Suppose to the contrary that 
$$
V(\mathcal{P}\varphi_\ep(t_3, u'_0) \geq k_0 + \frac{3}{4} \quad \text{ for some }t_3 > 0. 
$$
Define the set 
$$
S= \left\{t \in [0,t_3):\,V(\mathcal{P}\varphi_\ep(t_3, u'_0)) \leq k_0 + \frac{1}{2}\right\},
$$
then $S$ is non-empty since $0 \in S$. 
Let $t_4 = \sup S$. By \eqref{eq:claim1correct1} we have $t_3 - t_4 > T$. By \eqref{eq:claim1correct12}, we have $t_5 \in (t_4,t_3)$ such that $t_5 \in S$. This contradicts the definition of $t_4$. This proves \eqref{eq:claim:2correct}.

% Next, we show \eqref{eq:claim:2correcta}.
% Recalling $u'_0 = \mathcal{P}\varphi_\ep(T_\ep,u_0)$, we may rewrite \eqref{eq:claim:2correct} as
% \begin{equation}\label{eq:claim:2correct'}
% k_0-\frac{1}{2} < V(\mathcal{P}\varphi_\ep(t,u'_0)) < k_0+\frac{3}{4} \quad \text{ for }t \geq 0.
% \end{equation}
% In view of \eqref{eq:1.21a}, it is sufficient to show the existence of $t' \in [0,2T]$ such that $\mathcal{P} \varphi(t',u'_0) \in B_r(M(k_0)).$ Suppose to the contrary that $\mathcal{P} \varphi([0,2T],u'_0) \cap B_r(M(k_0)) = \emptyset$. Then
% \begin{equation}\label{eq:claim:2correct''}
% \begin{cases}
% V(\mathcal{P}\varphi(t,u'_0)) \leq V(\mathcal{P}u'_0) < k_0 + \frac{3}{4} &\text{ for } t \in [0,2T],\\
% V(\mathcal{P}\varphi(t,u'_0)) > V(\mathcal{P}\varphi_\ep(t,u'_0)) - \frac{\mu}{3} > k_0 - \frac{3}{4} &\text{ for } t \in [0,2T], 
% \end{cases}
% \end{equation}
% where the first part follows from \eqref{eq:claim:2correct'} and \eqref{eq:decreasing}, and the second part from \eqref{eq:claim:2correct'} and \eqref{eq:1.21} and $r < \frac{1}{4}$. 
% In conclusion, $\mathcal{P} \varphi([0,2T],u'_0) \cap \left[\cup_{k=1}^{{N_0}+1}B_r(M(k))\right] = \emptyset$. Thus, we can use Lemma \ref{lem:1.5} {\it twice} to get
% $$
% V(\mathcal{P}\varphi(2T,u'_0))  < V(\mathcal{P}u'_0) - 2\mu \leq k_0 + \frac{3}{4}- \frac{3}{2} = k_0-\frac{3}{4}.
% $$
% This is a contradiction to \eqref{eq:claim:2correct''}.

\begin{claim}\label{claim:3correct}
There exists $\ep_3 \in (0,\ep_2)$ such that if $\ep \in (0,\ep_3)$, for any $u_0 \in \textup{Int}\,K^+$ with certain $k_0$ guaranteed by Claim \ref{claim:2correct}, it follows that
\begin{equation}\label{eq:uuu}
\mathcal{P}\omega(u_0,\varphi_\ep) \subset B_{2r}(M(k_0)).
\end{equation}
\end{claim}
Suppose to the contrary that there is a certain $1\leq k_0 \leq {N_0}+1 $, a sequence $\ep \to 0$ and $N = N^\ep$ and $(d^\ep_i)_{i=1}^{N^\ep}$ and $u_0=u_0^\ep$ such that the conclusions of Claim \ref{claim:2correct} hold for that $k_0$ {but \eqref{eq:uuu} is false}. Let $v_\ep = \varphi_\ep(T_\ep,u^\ep_0)$. {Then $\mathcal{P}\omega(v_\ep,\varphi_\ep) \not\subset B_{2r}(M(k_0))$.} 
%By \eqref{eq:claim:2correcta}, $\mathcal{P}\omega(v_\ep,\varphi_\ep) \cap B_{2r}(M(k_0))\neq \emptyset.$
Let $w_\ep \in \omega(v_\ep,\varphi_\ep)$ such that $\mathcal{P}w_\ep  \not\in B_{2r}(M(k_0))$.

Thanks to the {\it a priori} estimates developed in Lemma \ref{lem:1.7},  $\{\mathcal{P}w_\ep\}$ belongs to a compact set. Therefore,  
we can pass to a sequence $\ep \to 0$ such that $\mathcal{P}w_\ep \to \hat{W}$. Taking \eqref{eq:claim:2correct} into account, 
$$
k_0 - \frac{1}{2} \leq V(\mathcal{P} \varphi_\ep(t,w_\ep)) \leq k_0 + \frac{3}{4} \quad \text{ for all }t \in \mathbb{R}.
$$
As a result, we have $\hat{W} \in \textup{Inv}\,K^+$ and $k_0 - \frac{1}{2} \leq  V(\Phi(\mathbb{R},\hat{W})) \leq k_0 + \frac{3}{4}$, where we implicitly used the observation in Remark \ref{rmk:1}. However, since $M(k_0)$ is the maximal invariant set in 
$$
\left\{ W \in \text{Inv}\,K^+:\, k_0 - \frac{1}{2} \leq V(W) \leq k_0 + \frac{3}{4}   \right\} ,
$$
we are led to the conclusion that
$\{\hat W\} = M(k_0)$. This is a contradiction since $\hat{W} \not\in B_{2r}(M(k_0))$. The proves Claim \ref{claim:3correct}.

Now, in view of Lemma \ref{lem:3.9}, $\mathcal{P}\omega(u_0,\varphi_\ep) \not\subset B_{2r}(M(k))$ for all $k >1$. Hence, for any $u_0 \in \text{Int}\,K^+$, \eqref{eq:uuu} holds with $k_0 = 1$. Since $M(1) = \{(\theta_{\hat{d}_1},0,...,0)\}$, this means 
$$
\limsup_{t\to\infty} \big\| \mathcal{P} \varphi_\ep(t,u_0) - (\theta_{\hat{d}_1},0,...,0)  \big\|<2r.
$$
This proves  \eqref{eq:cor2.1} and completes the proof of the proposition. 
\end{proof}

\section{The normalized principal Floquet bundle}\label{sec:5}
In this section, we {recall} the notion of a normalized principal Floquet bundle {(see \cite{Polacikbd})}, which is a generalization of the notion of principal eigenfunction of an elliptic, or periodic-parabolic operator. We give a theorem concerning its smooth dependence on parameters.
This property will be crucial in
%, which is crucial in the completion of 
the proof of Theorem \ref{thm:main}. 

\subsection{The normalized principal Floquet bundle}

Given $d>0$ and $h(x,t) \in C^{\beta,\beta/2}(\overline\Omega \times \mathbb{R})$, we say that the pair $(\psi_1(x,t), H_1(t))$ is the corresponding normalized principal Floquet bundle if it satisfies
\begin{equation}\label{eq:bundlen'}
\left\{
\begin{array}{ll}
\partial_t \psi_1(x,t) -d \Delta \psi_1(x,t) - h(x,t)\psi_1(x,t) =  H_1(t)\psi_1(x,t)  &\text{ for }x \in \Omega,\, t \in \mathbb{R},\\
\partial_\nu \psi_1(x,t) = 0 &\text{ for }x \in \partial \Omega, \, t \in \mathbb{R},\\
{\int_\Omega \psi_1(x,t)\,dx = 1} & \text{ for  } t\in \mathbb{R},\\
\psi_1(x,t) >0   &\text{ for }x \in \Omega,\, t \in \mathbb{R}.
\end{array}
\right.
\end{equation}
The existence and uniqueness of $(\psi_1(x,t), H_1(t))$ is {recalled} in 
%\cite{Mierczynski} (see also \cite{Huska2006,Huska2007}). See 
Theorem \ref{thm:A1}. % for details.

\begin{remark}\label{rmk:2}If $h(x,t) = \hat{h}(x)$ for some time-independent function $\hat{h}$, then $\psi_1$ and $H_1$ are time-independent, and coincide with  the principal eigenfunction and principal eigenvalue $(\hat\psi(x),\hat\mu)$ of 
\begin{equation}\label{eq:pev}
-d\Delta \hat\psi - \hat{h}(x) \hat\psi = \hat\mu \hat\psi\quad \text{ in }\Omega,\quad \partial_\nu \hat\psi = 0\quad \text{ on }\partial\Omega.
\end{equation}
\end{remark}

The main result of this section is the smooth dependence of the principal Floquet bundle on the coefficients.  
\begin{proposition}\label{prop:A2'}
The normalized principal Floquet bundle, as a mapping
$$
\begin{array}{rl}
(d,h) & \mapsto (\psi_1,H_1)\\
\mathbb{R}_+ \times C^{\beta,\beta/2}(\overline\Omega\times \mathbb{R}) & \to C^{2+\beta,1+\beta/2}(\overline\Omega\times \mathbb{R}) \times  C^{\beta/2}(\mathbb{R})
\end{array}
$$
is smooth.
\end{proposition}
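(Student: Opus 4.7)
The plan is to verify the hypotheses of the implicit function theorem in Hölder spaces. Set
$$\mathcal{X} := \{\psi \in C^{2+\beta, 1+\beta/2}(\overline\Omega \times \mathbb{R}): \partial_\nu \psi = 0\} \times C^{1+\beta/2}(\mathbb{R}), \quad \mathcal{Y} := C^{\beta, \beta/2}(\overline\Omega \times \mathbb{R}) \times C^{1+\beta/2}(\mathbb{R}),$$
and define $F : \mathbb{R}_+ \times C^{\beta, \beta/2}(\overline\Omega \times \mathbb{R}) \times \mathcal{X} \to \mathcal{Y}$ by
$$F(d, h, \psi, H) := \left(\partial_t \psi - d\Delta \psi - h\psi - H\psi,\ t \mapsto \int_\Omega \psi^2\,dx - 1 \right).$$
The map $F$ is polynomial of degree two in $(\psi, H)$ and affine in $(d, h)$, hence smooth. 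By definition of the normalized principal bundle, $F(d, h, \psi_1, H_1) = 0$. The conclusion will follow once the Fréchet partial derivative $D_{(\psi, H)} F$ is shown to be a bounded linear isomorphism from $\mathcal{X}$ to $\mathcal{Y}$ at $(d, h, \psi_1, H_1)$.

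A direct computation shows that this derivative sends $(\phi, \eta) \in \mathcal{X}$ to $(\mathcal{L}\phi - \eta\psi_1,\ 2\int_\Omega \psi_1 \phi\,dx) \in \mathcal{Y}$, where $\mathcal{L} := \partial_t - d\Delta - h - H_1$. To invert it, I would introduce the adjoint principal bundle $\psi_1^* > 0$ satisfying $-\partial_t \psi_1^* - d\Delta \psi_1^* - h\psi_1^* = H_1 \psi_1^*$ with Neumann boundary condition, normalized by $\int_\Omega \psi_1^* \psi_1\,dx \equiv 1$; its existence and regularity parallel those in Theorem \ref{thm:A1}. Given data $(f, g) \in \mathcal{Y}$, decompose $\phi = \alpha(t)\psi_1 + \tilde\phi$ with $\int_\Omega \psi_1^* \tilde\phi\,dx \equiv 0$. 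The normalization constraint then forces $\alpha(t) = g(t)/2 - \int_\Omega \psi_1 \tilde\phi\,dx$, and multiplying the PDE $\mathcal{L}\phi = f + \eta\psi_1$ by $\psi_1^*$, integrating over $\Omega$, and using integration by parts together with the adjoint equation yields $\alpha'(t) = \eta(t) + \int_\Omega \psi_1^* f\,dx$, which determines $\eta$ once $\tilde\phi$ is known. Matching terms, the residual equation becomes
$$\mathcal{L}\tilde\phi = f - \left(\int_\Omega \psi_1^* f\,dx\right)\psi_1 \qquad \text{with constraint} \quad \int_\Omega \psi_1^* \tilde\phi\,dx \equiv 0.$$

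The main obstacle is showing that this residual problem admits a unique bounded solution $\tilde\phi \in C^{2+\beta, 1+\beta/2}(\overline\Omega \times \mathbb{R})$ satisfying the Neumann and integral constraints, with bounded dependence on the right-hand side. Equivalently, one must invert $\mathcal{L}$ on the codimension-one invariant subspace $\{\tilde\phi : \int_\Omega \psi_1^* \tilde\phi\,dx \equiv 0\}$. The essential analytic ingredient is the \emph{exponential separation} property of strongly positive linear parabolic semiflows: the evolution operator of $\mathcal{L}$ admits an exponential dichotomy on $\mathbb{R}$ with one-dimensional leading direction $\operatorname{span}\{\psi_1(\cdot, t)\}$ and a uniform spectral gap to the complementary stable part. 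Combined with parabolic Schauder estimates on unit time intervals and a standard gluing argument to pass from finite-interval inverses to a bounded inverse on all of $\mathbb{R}$, this yields a bounded right inverse on the complementary subspace; invoking the implicit function theorem then concludes the proof.
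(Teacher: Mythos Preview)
Your approach is correct and follows the same overall strategy as the paper: set up the defining equations as a smooth map $F$, and apply the implicit function theorem after showing that the linearization $D_{(\psi,H)}F$ is an isomorphism, with exponential separation for the evolution generated by $\mathcal{L}$ as the key analytic input. The only difference is one of presentation. You introduce the adjoint principal bundle $\psi_1^*$ explicitly and decompose solutions via the pairing $\int_\Omega \psi_1^*(\cdot)\,dx$, whereas the paper works directly with the abstract projections $P^1(t),P^2(t)$ onto $X^1(t)=\mathrm{span}\{\psi_1(\cdot,t)\}$ and its invariant complement $X^2(t)$ from the H\'uska--Pol\'a\v{c}ik theory, constructing the inverse as $w^\perp(\cdot,t)=\int_{-\infty}^t U(t,\sigma)[P^2(\sigma)f(\cdot,\sigma)]\,d\sigma$ plus a correction along $\psi_1$. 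The two are equivalent, since $X^2(t)$ is precisely the annihilator of $\psi_1^*(\cdot,t)$; your version makes the Fredholm structure more explicit, while the paper's avoids having to establish the adjoint bundle separately. One minor point: with $\eta\in C^{1+\beta/2}(\mathbb{R})$ in your space $\mathcal{X}$, the formula $\eta=\alpha'-\int_\Omega\psi_1^*f\,dx$ only yields $C^{\beta/2}$ regularity directly; either place the $H$-component in $C^{\beta/2}(\mathbb{R})$ (as the paper in fact does in its proof) and recover the extra regularity a posteriori, or add a short bootstrap remark.
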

Since the proof of Proposition \ref{prop:A2'} is self-contained, we will postpone it to the Appendix. See Proposition \ref{prop:A2} for details. Next, let us recall that $0<\beta<1$ is fixed throughout the paper by \eqref{eq:MmM}.

\begin{corollary}\label{cor:A2'}
Let $\delta>0$, and $\hat{h}(x) \in C^\beta(\overline\Omega)$ be a non-constant function that depends on $x$ only. There exists $0<r'<1$ such that for any $d>0$ and any function $h(x,t) \in C^{\beta,\beta/2}(\overline\Omega \times \mathbb{R})$, if
\begin{equation}\label{eq:small}
\delta < d < 1/\delta,\quad 
\|h - \hat{h}\|_{C^{\beta,\beta/2}(\overline\Omega \times \mathbb{R})} < r',
\end{equation}
then the partial derivative $\partial_d H_1(t)$ of $H_1(t)$, with respect to the diffusion coefficient $d$, satisfies
$$
\inf_{t \in \mathbb{R}} \partial_d H_1(t) \geq r'.
$$
\end{corollary}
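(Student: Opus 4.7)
The strategy is to reduce to the classical self-adjoint elliptic case at the reference profile $\hat{h}(x)$, obtain a strictly positive derivative $\partial_d \lambda$ there, and then propagate this positivity to nearby profiles via the smooth dependence statement of Proposition~\ref{prop:A2'}.

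First, I would treat the \emph{reference case} $h\equiv\hat{h}(x)$. By Remark~\ref{rmk:2}, the normalized principal bundle reduces to the time-independent pair $(\psi_e(\,\cdot\,;d),\lambda(d))$ solving \eqref{eq:pev}, so in particular $H_1(t)\equiv \lambda(d)$. Differentiating \eqref{eq:pev} in $d$, multiplying by $\psi_e$, integrating over $\Omega$, and using both the Neumann boundary condition and the normalization $\int_\Omega \psi_e^2\,dx = 1$ (which kills the $\int \psi_e\,\partial_d\psi_e\,dx$ term), one obtains the classical Hadamard-type formula
\begin{equation*}
\partial_d \lambda(d)\;=\; \int_\Omega |\nabla \psi_e(x;d)|^2\,dx.
\end{equation*}
Since $\hat{h}$ is non-constant, the corresponding principal eigenfunction $\psi_e$ is non-constant (otherwise the equation would force $\hat{h}$ to be constant), so $\partial_d\lambda(d) > 0$ for every $d > 0$. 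By continuity of $d\mapsto \psi_e(\cdot;d)$ in $H^1(\Omega)$ (a standard consequence of Proposition~\ref{prop:A2'} specialized to time-independent $h$), the map $d\mapsto \partial_d\lambda(d)$ is continuous on $(0,\infty)$, hence attains a strictly positive minimum
\begin{equation*}
c_0\;:=\;\min_{d\in[\delta,1/\delta]}\partial_d\lambda(d)\;>\;0
\end{equation*}
on the compact interval $[\delta,1/\delta]$.

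Next, I would \emph{perturb away} from $h\equiv\hat{h}$. By Proposition~\ref{prop:A2'}, the map $(d,h)\mapsto H_1$ from $\mathbb{R}_+\times C^{\beta,\beta/2}(\overline\Omega\times\mathbb{R})$ into $C^{1+\beta/2}(\mathbb{R})$ is smooth, so its $d$-derivative $(d,h)\mapsto \partial_d H_1$ is jointly continuous into $C^{1+\beta/2}(\mathbb{R})$, in particular into $C_b(\mathbb{R})$. At every point $(d,\hat{h})$ with $d\in[\delta,1/\delta]$, this derivative equals the constant function $\partial_d\lambda(d)\geq c_0$. By joint continuity and compactness of $[\delta,1/\delta]$, there exists $r_0>0$ such that for all $(d,h)$ with $d\in[\delta,1/\delta]$ and $\|h-\hat{h}\|_{C^{\beta,\beta/2}(\overline\Omega\times\mathbb{R})}<r_0$,
\begin{equation*}
\sup_{t\in\mathbb{R}}\,\bigl|\partial_d H_1(t;d,h)-\partial_d\lambda(d)\bigr|\;<\;\tfrac{c_0}{2},
\end{equation*}
whence $\inf_{t\in\mathbb{R}}\partial_d H_1(t;d,h)\geq c_0/2$. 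Choosing $r':=\min\{r_0,\,c_0/2\}$ gives the desired conclusion, since the open condition $d\in(\delta,1/\delta)$ sits inside the compact one.

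The step I expect to require the most care is the passage from ``smoothness into $C^{1+\beta/2}(\mathbb{R})$'' (which is an \emph{a priori} statement on $\mathbb{R}$) to the \emph{uniform-in-$t$} closeness of $\partial_d H_1(\cdot;d,h)$ to the constant $\partial_d\lambda(d)$. This is really just the embedding $C^{1+\beta/2}(\mathbb{R})\hookrightarrow C_b(\mathbb{R})$ together with compactness of $[\delta,1/\delta]$, but it is the one place where the appendix's smoothness statement must be invoked in its full strength, including the explicit target space. All other pieces are routine: the Hadamard formula is the classical elliptic calculation, and the strict positivity $\int|\nabla\psi_e|^2>0$ is immediate once one rules out constant $\psi_e$.
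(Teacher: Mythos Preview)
Your proposal is correct and follows essentially the same route as the paper: reduce to the elliptic eigenvalue problem via Remark~\ref{rmk:2}, derive the Hadamard formula $\partial_d\lambda=\int_\Omega|\nabla\psi_e|^2\,dx>0$, take the positive minimum over $d\in[\delta,1/\delta]$, and then invoke Proposition~\ref{prop:A2'} to transfer the lower bound to nearby $h$. The only cosmetic difference is that you spell out the compactness/continuity argument and the embedding into $C_b(\mathbb{R})$ a bit more carefully than the paper does.
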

\begin{proof}
Denote by $(\psi_1(x,t;d,h), H_1(t;d,h))$ the normalized principal Floquet bundle satisfying \eqref{eq:bundlen'} for some constant $d>0$ and function $h$.  
By Remark \ref{rmk:2}, we see that 
$$
(\psi_1(x,t;d,\hat{h}),H_1(t;d,\hat{h}) = (\hat\psi(x) , \hat\mu)
$$
where $(\hat\psi(x),\hat\mu)$ is the principal eigenpair of \eqref{eq:pev}.

% Next, we claim that $\partial_d \hat\mu >0$ for all $d>0$. Indeed, if we differentiate \eqref{eq:pev} with respect to $d$ (and denote the derivative as $'$), we get
% $$
% \begin{cases}
% -d \Delta \hat\psi' - \hat{h} \hat\psi' - \Delta \hat\psi = \hat\mu \hat\psi' + \hat\mu' \hat\psi &\text{ in }\Omega,\\
% \partial_\nu \hat\psi' = 0 &\text{ on }\partial\Omega.
% \end{cases}
% $$
% Multiplying the above by $\hat\psi$ and integrating by parts, we obtain
% $$
% 0<\int_\Omega |\nabla \hat\psi|^2\,dx = \hat\mu' \int_\Omega |\hat\psi|^2\,dx.
% $$
% Note that the strict inequality follows from the fact that $\hat{h}$ is non-constant, so that $\hat\psi$ is also non-constant. 
By Lemma \ref{lem:mmoonn}, $\partial_d\hat\mu >0$ for all $d>0$. 
In particular, 
$$
r_0:= \inf\limits_{\delta\leq d \leq 1/\delta} \partial_d\hat\mu >0
$$
Now it follows from Proposition \ref{prop:A2'} that there exists $r'\in (0,r_0/2)$ such that if \eqref{eq:small} holds, then
$$
\|\partial_d H_1(\cdot; d,h) - \partial_d H_1(\cdot; d, \hat h)\|_{C^{\beta,\beta/2}(\overline\Omega \times \mathbb{R})} = \|\partial_d H_1(\cdot; d,h) - \partial_d \hat\mu\|_{C^{\beta,\beta/2}(\overline\Omega \times \mathbb{R})}  < \frac{r_0}{2}.
$$
Hence
$$
\inf_{t \in \mathbb{R}} \partial_d H_1(t;d,h) > \partial_d \hat\mu - \frac{r_0}{2} \geq \frac{r_0}{2} \quad \text{ for }\delta < d < 1/\delta.
$$
This proves the corollary.
\end{proof}

\section{Completion of the proof of main theorem}\label{sec:6}
Recall that $m \in C^{\beta+\epsilon'}(\overline\Omega)$ where $0<\beta <1$ and $0<\epsilon'<1-\beta$.
\begin{proposition}\label{prop:2.1}
Given $\delta>0$, and a non-constant function {$\hat{h}(x) \in C^{\beta+\epsilon'}(\overline\Omega)$} that depends on $x$ only, there exists $r_1>0$ such that for any $N \in \mathbb{N}$ and $(d_i)_{i=1}^N$ satisfying
$$
\delta < d_1<...<d_N < \frac{1}{\delta},
$$
if a positive solution $u$ of \eqref{eq:1.1} satisfies 
\begin{equation}\label{eq:prop2.1}
\limsup_{t \to \infty}\Big\|\Big(m- \sum_{i =1}^N u_i(\cdot,t)\Big) - \hat{h} \Big\|_{C(\overline\Omega)}< r_1,
\end{equation}
then $u \to (\theta_{d_1},0,...,0)$ as $t\to\infty$.
\end{proposition}
Before we give the proof of Proposition \ref{prop:2.1}, an immediate consequence can be stated as follows:
If $$
\mathcal{P} \omega(u_0,\varphi_\ep) \subset B_{r_1}((\theta_{\hat{d}_1},0,\dots,0))
$$
then $\omega(u_0,\varphi_\ep) = \{(\theta_{d_1},0,\dots,0)\}.$ Compare this with Lemma \ref{lem:3.9'} of this article, which is proved in Dockery et al. \cite{Dockery1998}.

\begin{corollary}\label{cor:2.1}
Given constants $0 <\hat{d}_1<...<\hat{d}_{N_0}$ such that $(\hat{d}_k)_{k=1}^{N_0} \in \mathcal{D}$, there exists $r_2>0$ such that for any  $N \in \mathbb{N}$ and $(d_i)_{i=1}^N$ satisfying
$$
0 < d_1<...<d_N\quad \text{ and }\quad \textup{dist}_H((d_i)_{i=1}^N, (\hat{d}_k)_{k=1}^{N_0}) < \frac{1}{2}\hat{d}_{1},
$$
if a positive solution $u$ of \eqref{eq:Pep} satisfies \eqref{eq:cor2.1} {with $r =r_2$},
%\begin{equation}\label{eq:cor:2.1}
%\limsup_{t \to \infty}\big\| \sum_{i =1}^N u_i - \theta_{\hat{d}_1}(x) \big\|< r,
%\end{equation}
then $u \to (\theta_{d_1},0,...,0)$ as $t\to\infty$.
\end{corollary}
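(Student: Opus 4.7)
The plan is to deduce Corollary \ref{cor:2.1} directly from Proposition \ref{prop:2.1} by choosing the comparison function to be $\hat{h}(x) := m(x) - \theta_{\hat{d}_1}(x)$. With this choice, the algebraic identity $m - \sum_i u_i - \hat{h} = \theta_{\hat{d}_1} - \sum_i u_i$ converts the hypothesis \eqref{eq:cor2.1} into the hypothesis of Proposition \ref{prop:2.1} (with $r$ halved), so the entire argument reduces to (a) verifying that $\hat{h}$ meets the standing assumptions of Proposition \ref{prop:2.1}, and (b) choosing a single ellipticity constant $\delta>0$ that works uniformly in $N$ and in the perturbed rates.

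For step (a), I first observe that $\hat{h} \in C^\alpha(\overline\Omega)$, since $m \in C^\alpha(\overline\Omega)$ and $\theta_{\hat{d}_1} \in C^{2+\alpha}(\overline\Omega)$ by elliptic regularity applied to \eqref{eq:theta}. To see that $\hat{h}$ is non-constant, suppose for contradiction that $m - \theta_{\hat{d}_1} \equiv c$ for some constant $c$; substituting into \eqref{eq:theta} gives $\hat{d}_1 \Delta \theta_{\hat{d}_1} = -c\,\theta_{\hat{d}_1}$, and integrating over $\Omega$ using the Neumann boundary condition together with $\theta_{\hat{d}_1}>0$ forces $c=0$. But then $m \equiv \theta_{\hat{d}_1}$ satisfies $\Delta m = 0$ with zero Neumann data, hence is constant, contradicting the standing hypothesis that $m$ is non-constant. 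For step (b), the Hausdorff bound $\textup{dist}_H((d_i),(\hat{d}_k)) < \hat{d}_1/2$ forces every $d_i$ to lie within $\hat{d}_1/2$ of some $\hat{d}_k \in [\hat{d}_1,\hat{d}_K]$, so $\hat{d}_1/2 \le d_1 < \cdots < d_N \le \hat{d}_K + \hat{d}_1/2$; any $\delta \in \bigl(0, \min\{\hat{d}_1/2,\, (\hat{d}_K + \hat{d}_1/2)^{-1}\}\bigr)$ then works, and this $\delta$ depends only on $\hat{d}_1$ and $\hat{d}_K$, not on $N$ or the specific perturbation.

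With $\hat{h}$ and $\delta$ fixed, Proposition \ref{prop:2.1} supplies a threshold $r'>0$, and I set $r := r'/2$. For any positive solution $u$ of \eqref{eq:Pep} with diffusion rates satisfying the Hausdorff bound and satisfying \eqref{eq:cor2.1} with this $r$, the identity above yields
$$
\limsup_{t\to\infty} \Bigl\|m - \sum_{i=1}^N u_i - \hat{h}\Bigr\|_{C(\overline\Omega)} = \limsup_{t\to\infty} \Bigl\|\theta_{\hat{d}_1} - \sum_{i=1}^N u_i\Bigr\|_{C(\overline\Omega)} < 2r = r',
$$
so Proposition \ref{prop:2.1} applies and gives $u \to (\theta_{d_1},0,\ldots,0)$ as $t\to\infty$. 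No step poses a genuine obstacle here; all of the analytic depth—the eventual dominance of the slowest mode through the normalized principal bundle machinery of Section \ref{sec:5}—is already packaged inside Proposition \ref{prop:2.1}. The one conceptual check is the non-constancy of $\hat{h}$, which is precisely what is needed to invoke the strict positivity $\partial_d\lambda>0$ exploited in Corollary \ref{cor:A2'}, and this is immediate from the non-constancy of $m$.
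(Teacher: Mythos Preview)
Your proof is correct and follows essentially the same approach as the paper: both reduce the corollary to Proposition~\ref{prop:2.1} by taking $\hat{h}(x)=m(x)-\theta_{\hat{d}_1}(x)$ and verifying its non-constancy from the equation~\eqref{eq:theta}. Your non-constancy argument (integrate to force the constant to vanish, then conclude $\theta_{\hat{d}_1}$ is harmonic and hence constant) is a minor rephrasing of the paper's eigenfunction argument, and you additionally spell out the uniform choice of $\delta$ and the halving of $r$ that the paper leaves implicit.
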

\begin{proof}[Proof of Corollary \ref{cor:2.1}]
To apply Proposition \ref{prop:2.1}, it suffices to check that, for each $\hat d>0$, the function $\hat{h}(x)=m(x) - \theta_{\hat{d}}(x)$ is non-constant in $x$. Suppose to the contrary that, 
$m(x) -\theta_{\hat{d}}(x) = \lambda$ for some constant $\lambda$. Then 
$$
\hat{d} \Delta \theta_{\hat{d}} + \lambda \theta_{\hat{d}} = 0\,\,  \text{ in }\Omega,\,\, \text{ and }\,\, \partial_\nu \theta_{\hat{d}} = 0 \,\,\text{ on }\partial\Omega.
$$
i.e. $\lambda/\hat d$ is an eigenvalue of the Laplacian operator in the domain $\Omega$ subject to the Neumann boundary condition. Since $\theta_{\hat{d}}$ constitutes a positive eigenfunction, it must be the case that $\lambda =0$ and $\theta_{\hat{d}} = C$ for some constant $C$. However, this implies that $m(x) = \theta_{\hat{d}} = C$ as well. This is a contradiction to the standing assumption that $m(x)$ is a non-constant function.
\end{proof}

\begin{proof}[Proof of Proposition \ref{prop:2.1}]
Given $\delta>0$ {and $\hat{h}(x) \in C^{\beta+\epsilon'}(\overline\Omega)$,} let $0<r'<1$ be as given in Corollary \ref{cor:A2'}. {We claim that there is $0<r_1<1$, depending on $r'$, such that if a positive solution $u$ of \eqref{eq:1.1} satisfies \eqref{eq:prop2.1} with such an $r_1$, then}
%
%$\bar\ep$ such that for any $\ep \in (0,\bar\ep)$, any $N$ and any $(d_i)_{i=1}^N$ satisfying \eqref{eq:pert}, 
\begin{equation}\label{eq:iii}
\limsup_{t \to \infty}\Big\|\Big(m - \sum_{i =1}^N u_i(\cdot,t)\Big) - \hat{h} \Big\|_{C^{\beta,{\beta/2}}(\overline\Omega\times[t,t+1])}< r'.
\end{equation}
Indeed, the fact that $m(x),\hat{h}(x) \in C^{\beta+\epsilon'}(\bar\Omega)$ and the  {\it a priori} estimate \eqref{eq:lfinboundgeq1} in Lemma \ref{lem:1.7} together imply that
$$
\Big\|\Big(m - \sum_{i =1}^N u_i\Big) - \hat{h} \Big\|_{C^{\beta+\epsilon',(\beta+\epsilon')/2}(\overline\Omega \times [1,\infty))} \leq C.
$$
{By interpolating with \eqref{eq:prop2.1}}, we can show that 
\begin{align*}
&\Big\|\Big(m - \sum_{i =1}^N u_i\Big) - \hat{h} \Big\|_{C^{\beta,\beta/2}(\overline\Omega\times[t,t+1])} \\
&\leq C \Big\|\Big(m(x)- \sum_{i =1}^N u_i\Big) - \hat{h} \Big\|^\gamma_{C(\overline\Omega\times[t,t+1])}\leq C r_1^\gamma \quad \text{ for }t \gg 1,
\end{align*}
where $C$ and $\gamma = \epsilon'/(\beta + \epsilon')$ are positive constants in the interpolation inequality. Hence, we deduce \eqref{eq:iii} upon taking $r_1\in (0, (r'/C)^{1/\gamma}]$. 

Next, define $h(x,t) = m(x) - \sum_{j=1}^N u_j(x,t)$. After an appropriate translation in time, we may assume without loss of generality that
\begin{equation}\label{eq:1.deltaclose}
\|h(\cdot,t)-\hat{h}(\cdot)\|_{{C^{\beta,\beta/2}}(\overline\Omega \times [0,\infty))} <r'.
\end{equation}
Extend $h(x,t)$ evenly in $t$, so that it is defined for $(x,t) \in \Omega \times \mathbb{R}$. Let $\psi_1(x,t;d,h)$ and $H_1(t;d,h)$ be the normalized principal Floquet bundle guaranteed by Section \ref{sec:5}.   By an application of Corollary \ref{cor:A2'}, we have for any $d \in [\delta,1/\delta]$,
\begin{equation}\label{eq:spectralgap}
\inf_{t \in \mathbb{R}} \partial_dH_1(t;d,h) \geq r' >0.
\end{equation}

For each $i$, we claim that there is $\overline{c}_i > \underline{c}_i >0$ such that 
\begin{equation}\label{eq:spectraldeomp}
\underline{c}_i e^{-\int_0^t H_1(s;d_i,h)\,ds} \psi_1(x,t;d_i,h) \leq     u_i(x,t) \leq  \overline{c}_i e^{-\int_0^t H_1(s;d_i,h)\,ds}  \psi_1(x,t;d_i,h) 
\end{equation} for {$(x,t)\in\Omega\times [0,\infty)$.}

Indeed, the left and right hand sides of \eqref{eq:spectraldeomp} satisfy the same equation as $u_i$. Hence we can choose $\overline{c}_i$ large enough and $\underline{c}_i$ small enough to deduce \eqref{eq:spectraldeomp} from classical comparison theorem of linear parabolic equations in the domain $\Omega \times [0,\infty)$. This proves \eqref{eq:spectraldeomp}.

By \eqref{eq:spectralgap}, we have
$$
H_1(t;d_{i},h)-H_1(t;d_1,h) \geq (d_{i}-d_1)r' >0\quad \text{ for all }2\leq i\leq N,\, t \in \mathbb{R}.
$$
{Next, we claim  
\begin{equation}\label{eq:hhar}
\frac{1}{C_\delta} \leq \psi_1(x,t;d,h) \leq C_\delta \quad \text{ in }\Omega \times \mathbb{R},\, d\in [\delta,1/\delta].
\end{equation}
Indeed, each $\psi_1$ satisfies a linear heat equation with Neumann boundary condition and $L^\infty$ bounded coefficients. By \cite[Theorem 2.5]{Huska2006}, there is a constant C depending on $\delta,\|h\|_{L^\infty(\Omega \times \mathbb{R})}$ but independent of $t \in \mathbb{R}$ and $d \in [\delta,1/\delta]$ such that
$$
\sup_{x \in \Omega} \psi_1(x,t;d,h) \leq C\inf_{x \in \Omega} \psi_1(x,t;d,h).
$$
Combining with the normalization $\int_\Omega \psi_1(x,t;d,h) \,dx = 1$, we deduce
the estimate \eqref{eq:hhar}.}

Using \eqref{eq:spectralgap} and \eqref{eq:hhar}, we derive from \eqref{eq:spectraldeomp} that, for $i>1$,
\begin{align*}
\frac{u_{i}(x,t)}{u_1(x,t)} &\leq C \exp\left(-\int_0^t(H_1(s;d_{i})-H_1(s;d_1))\,ds \right)  \frac{\psi_1(x,t;d_{i},h)}{\psi_1(x,t;d_{1},h) }\\ 
&\leq C\exp\left(-(d_{i}-d_1)r't\right) \to 0 \quad \text{ as }t\to\infty.
\end{align*}
% Note that we have used \eqref{eq:hhar}.
% the Harnack inequailty (see also \cite[Theorem 2.5]{Huska2006})
% %where we used the facts that $r_i(\cdot,t) \to 0$ uniformly in $\Omega$ and 
% which says that there is $C= C_\delta>0$ such that
% $$
% \frac{1}{C_\delta} \leq \psi_1(x,t;d,h) \leq C_\delta \quad \text{ in }\Omega \times \mathbb{R},\, d\in [\delta,1/\delta].
% $$
% This follows from the facts that $\psi_i(x,t)>0$ satisfies a linear parabolic equation with $L^\infty$ bounded coefficients in $\Omega \times (-\infty,\infty)$, and that $\int_\Omega |\psi_i(x,t)|^2\,dx =1$ for all $t\in \mathbb{R}$.

    Since we also have $\limsup\limits_{t \to \infty} \|u_1(\cdot,t)\| \leq C_1$ (by Lemma \ref{lem:1.7}), we deduce that $u_i \to 0$ uniformly for $i=2,..,N$. Hence the semiflow $\varphi_\ep$ generated by \eqref{eq:Pep} is asymptotic to the single species model consisting of only the first species $u_1$. Since the trivial solution is repelling by our assumptions $\int_\Omega m\,dx \geq 0$ and $m$ is non-constant (see Lemma \ref{lem:3.9}), we deduce that $u_1 \to \theta_{d_1}$ uniformly as $t\to\infty$.
\end{proof}

Recall that a subset $A$ of $K^+$ is said to be {\it internally chain transitive} with respect to the semiflow $\varphi$ if, for two points $u_0,v_0 \in A$, and any $\delta>0$, $T>0$, there is a finite sequence $$
\mathcal{C}_{\delta,T}=\{u^{(1)}=u_0,u^{(2)},...,u^{(m)}=v_0; t_1,...,t_{m-1} \}
$$
    with $u^{(j)} \in A$ and $t_j \geq T$, such that $\|\varphi(t_j,u^{(j)}) - u^{(j+1)}\| < \delta$ for all $1 \leq i \leq m-1$. The sequence $\mathcal{C}_{\delta,T}$ is called a $(\delta,T)$-chain connecting $u_0$ and $v_0$. 
    
    Let $E,E'$ be two equilibrium points.  $E$ is said to be chained to $E'$, written as $E \to E'$, if there exists a full trajectory $\varphi(t,u_0)$ (though some $u_0$ distinct from $E,E'$) such that $\alpha(u_0,\varphi)=E$ and $\omega(u_0,\varphi)=E'$. We recall that $E_i$ denotes the equilibrium $(0,...,0,\theta_{d_i},0,...,0)$ of the semiflow $\varphi_\ep$.

\begin{proof}[Proof of Theorem \ref{thm:main}]
Let $\{\hat{d}_k\}_{k=1}^{N_0} \in \mathcal{D}$. Then let {$r_2>0$} be given by 
Corollary \ref{cor:2.1}. By Proposition \ref{prop:1.14}, there exists $\overline\ep \in (0,\hat{d}_1/2)$ such that for any $N\in \mathbb{N}$ and any choice of diffusion rates $(d_i)_{i=1}^N$ such that $\textup{dist}_H((d_i),(\hat{d}_k)) < \overline\ep$, the estimate
\eqref{eq:cor2.1} {with $r=r_2$} holds for all positive solutions. Hence by Corollary \ref{cor:2.1}, the equilibrium $E_1$ attracts all solutions of \eqref{eq:1.1} with initial data in $\textup{Int}\,K^+$.

It remains to verify that  $(d_i)_{i=1}^N \in \mathcal{D}$, i.e. that the semiflow $\varphi_\ep$ generated by \eqref{eq:Pep} admits the desired Morse decomposition. 
We will repeat the proof of Proposition \ref{prop:1.2}. {To do that, we need to show (i) $\cup_{u_0 \in K^+} \omega(u_0) \subset \{E_i\}_{i=1}^{N+1}$; and that (ii) there is no cycle of fixed points.} 

To prove (i), it remains to consider a trajectory starting at some $u_0 \in K^+ \setminus (\textup{Int}\,K^+)$. By the strong maximum principle, it either enters the $\textup{Int}\,K^+$ for all $t>0$, or there is at least one component that is identically zero for all $t>0$. In the first case, the trajectory also converges to $E_1$. In the second case, 
it suffices to repeat the proofs for a suitable subset $(\tilde{d}_j)$ of $(d_i)$, to deduce again the convergence to the equilibrium $E_{i_0}$,  where $i_0$ is the smallest integer such that the $i_0$-th component of $u_0$ is non-zero. This proves (i). Next, we prove (ii), i.e. there is no cycle of fixed points. Indeed, if $E_i$ is chained to $E_j$, then necessarily $i > j$. 

It therefore follows from {\cite[Theorem 3.2 and Remark 4.6]{Hirsch2001}} that any compact internally chain transitive set is an equilibrium point. Since any omega (resp. alpha) limit set is internally chain transitive, it can only be one of the $E_i$.

In conclusion, for any choice of diffusion rates $(d_i)_{i=1}^N$ that is sufficiently close to $(\hat{d}_k)$ in the Hausdorff sense, 
the set of equilibria with the obvious ordering gives a Morse decomposition of the dynamics of \eqref{eq:Pep}. This means $(d_i)_{i=1}^N \in \mathcal{D}$.
\end{proof}

\section{Conclusion}\label{sec:7}

In his seminal paper \cite{Hastings1983}, Hastings showed that for two competing species  that are ecologically identical but having distinct diffusion rates, the slower disperser can invade the faster disperser when rare, but not vice versa. Later, Dockery et al. \cite{Dockery1998} proved that the slower disperser always competitively excludes the faster disperser, regardless of initial conditions, and conjectured that the same is true for any number of species.

In this paper, we show that for any number of competing species which are ecologically identical and having distinct diffusion rates $\{d_i\}_{i=1}^N$, 
there are choices of $\{d_i\}_{i=1}^N$ for which the slowest disperser is able to competitively exclude the remainder of the species. In fact, the choices of such diffusion rates is open in the space of finite sets of $\mathbb{R}_+$ endowed with the Hausdorff topology. Our result provides some evidence in the affirmative direction regarding the conjecture by Dockery et al. in \cite{Dockery1998}.

\appendix

\section{The normalized principal Floquet bundle}\label{sec:B}

\subsection{Existence and uniqueness results}

Let $\Omega \subset \mathbb{R}^n$ be a smooth bounded domain, % with outer unit normal vector $\nu(x)$ on the boundary $\partial\Omega$, 
and consider the linear parabolic operator of non-divergence form:
\begin{equation}\label{eq:Lt}
\partial_t\psi - \mathcal{L}_t \psi = \partial_t \psi-  a_{ij}(x,t) \partial^2_{x_ix_j} \psi - b_j (x,t)\partial_{x_j} \psi - c(x,t) \psi,
\end{equation}
{with oblique boundary condition} 
\begin{equation}\label{eq:Bt}
\mathcal{B}_t\psi = p_i(x,t) \partial_{x_i}\psi + p_0(x,t) \psi, 
\end{equation}
where repeated indices are summed from $1$ to $n$; the coefficients $a_{ij},b_j,c,p_i$ are continuous in $x,t$ and satisfy for some $\Lambda>1$ 
% and {$\beta \in (0,1)$},  
% \begin{equation}\label{eq:boundM} 
% \begin{cases}
% \|a\|_{C^{\beta,\beta/2}(\overline{\Omega} \times \mathbb{R})}  +  \|b\|_{C^{\beta,\beta/2}(\overline{\Omega} \times \mathbb{R})}  +  \|c\|_{{C^{\beta,(1+\beta)/2}}(\overline{\Omega} \times \mathbb{R})} \leq \Lambda,\\
% \|p\|_{C^{1+\beta,\beta/2}(\partial{\Omega}\times\mathbb{R})} \leq \Lambda,
% \end{cases}
% \end{equation}
% where $\Omega$ is a smooth bounded domain in $\mathbb{R}^n$, and 
\begin{equation}\label{eq:boundM2} 
\begin{cases}
\frac{1}{\Lambda} |\xi|^2\leq a_{ij} (x,t)\xi_i \xi_j\leq \Lambda |\xi|^2  \quad  \text{ for }x \in \Omega,\, t \in \mathbb{R}, \,\xi \in \mathbb{R}^n,\\
\inf\limits_{\partial\Omega\times\mathbb{R}}p_0(x,t) \geq 0 \quad \text{ and }\quad \inf\limits_{\partial\Omega\times\mathbb{R}}\nu_i(x)p_i(x,t) >0 %& \text{ for }x \in \partial\Omega, t \in \mathbb{R}
\end{cases}
\end{equation}
where $(\nu(x))_{i=1}^n$ is the outward unit normal vector on $\partial\Omega$. 

% for some $\Lambda>1$ and {$\beta \in (0,1)$},  
% \begin{equation}\label{eq:boundM} 
% \begin{cases}
% \|a\|_{C^{\beta,{(1+\beta)/2}}(\overline{\Omega} \times \mathbb{R})}  +  \|b\|_{C^{\beta,{(1+\beta)/2}}(\overline{\Omega} \times \mathbb{R})}  +  \|c\|_{{C^{\beta,(1+\beta)/2}}(\overline{\Omega} \times \mathbb{R})} \leq \Lambda,\\
% \|p\|_{C^{1+\beta,{(1+\beta)/2}}(\partial{\Omega}\times\mathbb{R})} \leq \Lambda,
% \end{cases}
% \end{equation}
% where $\Omega$ is a smooth bounded domain in $\mathbb{R}^n$, and 

%In the following $C$ denotes a generic constant that depend on $\Lambda$ but is independent of the coefficients $\mathcal{A}:=(a_{ij},b_j,c)$. 
We recall the existence and uniqueness of the normalized principal Floquet bundle.

\begin{theorem}\label{thm:A1}
Suppose 
$$
a_{ij},b_j,c \in C^{\beta,\beta/2}(\overline\Omega\times\mathbb{R})\quad \text{ and }\quad p_i \in C^{1+\beta, (1+\beta)/2}(\partial\Omega\times\mathbb{R}).$$ Then
there exists a unique pair  $(\psi_1(x,t),H_1(t)) \in C^{2+\beta, 1+\beta/2}(\overline\Omega \times \mathbb{R}) \times C^{\beta/2}(\mathbb{R})$ satisfying, in classical sense,
\begin{equation}\label{eq:bundlen}
\left\{
\begin{array}{ll}
\partial_t \psi -\mathcal{L}_t\psi = H(t)\psi  &\text{ for }x \in \Omega,\, t \in \mathbb{R},\\
{\mathcal{B}_t}\psi(x,t) = 0 &\text{ for }x \in \partial \Omega, \, t \in \mathbb{R},\\
{\int_\Omega \psi(x,t)\,dx = 1} & \text{ for  } t\in \mathbb{R},\\
\psi(x,t) >0   &\text{ for }x \in \Omega,\, t \in \mathbb{R}.
\end{array}
\right.
\end{equation}
Furthermore, there exists $C$ independent of $t$ such that
\begin{equation}\label{eq:expbound}
\frac{1}{C} \leq \psi_1(x,t) \leq {C}  \quad \text{ for all }x \in \Omega,\, t\in \mathbb{R}.
\end{equation}
\end{theorem}
\begin{proof}[Proof of Theorem \ref{thm:A1}]
By \cite[Theorem 2.1(iii) and Corollary 2.4]{Mierczynski} (which used on the abstract theory in \cite{Polacikbd}), the problem
\begin{equation}\label{eq:bundle}
\left\{
\begin{array}{ll}
\partial_t \tilde{\psi} - \mathcal{L}_t\tilde\psi=0 &\text{ for }x \in \Omega,\, t \in \mathbb{R},\\
{\mathcal{B}_t} \tilde{\psi}(x,t) = 0 &\text{ for }x \in \partial \Omega, \, t \in \mathbb{R},\\
\int_\Omega {\tilde{\psi}(x,0)}\,dx = 1,  & \\
\tilde{\psi}(x,t) >0  & \text{ for }x \in \Omega,\, t \in \mathbb{R},
\end{array}
\right.
\end{equation}
has a unique positive solution $\tilde{\psi}(x,t)$. 
By the standard parabolic regularity theory, one can observe that 
$\tilde \psi \in C^{2 + \beta,1+\beta/2}_{loc}( \overline{\Omega} \times \mathbb{R} )$. 
 Furthermore, the uniform Harnack inequality \cite[Theorem 2.5]{Huska2006} holds, i.e. there exists $C$ such that
\begin{equation}\label{eq:lowe1}
\sup\limits_{x \in \Omega} \tilde\psi(x,t) \leq C\inf\limits_{x \in \Omega} \tilde\psi(x,t) \quad \text{ for all }t \in \mathbb{R}.
\end{equation}

We proceed to define the normalize principal Floquet bundle $(\psi_1,H_1)$ by %i.e. we define
$$
{H_1(t) :=  - \frac{d}{dt}\left[ \log \Big\| \tilde\psi(\cdot\,,t)\Big\|_{L^1(\Omega)}\right] = -\frac{\int_\Omega  \partial_t \tilde\psi \,dx}{ \int_\Omega \tilde\psi\,dx}}
$$
and 
\begin{equation}\label{eq:lowe12}
\psi_1(x,t) := \exp\left(\int_0^t {H_1(s)}\,ds\right) \tilde{\psi}(x,t).
\end{equation}
Then it is immediate that $H_1 \in C^{\beta/2}_{loc}(\mathbb{R})$ and $\psi_1 \in C^{2 + \beta, 1+\beta/2}_{loc}(\overline\Omega \times \mathbb{R})$ and that $(\psi_1,H_1)$ satisfies \eqref{eq:bundlen}. To conclude the proof, it remains to show that
\begin{equation}\label{eq:hhold}
\|H_1\|_{C^{\beta/2}(\mathbb{R})} \leq C \quad \text{ and } \quad \|\psi_1\|_{C^{2 + \beta, 1+\beta/2}(\overline\Omega \times \mathbb{R})}\leq C.
\end{equation}

\begin{claim}\label{claim:5a}
There exists $C$ independent of $t_0 \in \mathbb{R}$ such that 
\begin{equation}\label{eq:lowe2a}
\sup_{\Omega \times [t_0,t_0+1]} \tilde\psi(x,t ) \leq e^{\|c\|_\infty} \sup_{x \in \Omega} \tilde\psi(x,t_0).
\end{equation}
\end{claim}
Fix $t_0 \in \mathbb{R}$, observe that $\psi^*(x,t):= e^{\|c\|_\infty t}\sup_\Omega \tilde\psi(\cdot,t_0)$ and $\tilde\psi(x,t)$ forms a pair of super and sub-solutions to \eqref{eq:bundle}. Thus \eqref{eq:lowe2a} follows by comparison.

\begin{claim}\label{claim:5b}
There exists $C$ independent of $t_0 \in \mathbb{R}$ such that 
\begin{equation}\label{eq:lowe2b}
\sup_{\Omega \times [t_0,t_0+1]} \tilde\psi(x,t ) \leq C \inf_{x \in \Omega} \tilde\psi(x,t_0+1).
\end{equation}
\end{claim}
Fix an $x_0 \in \Omega$, 
by the usual parabolic Harnack inequality \cite[Theorem 2.2]{Fabes1999}, there exists $C_1>0$, $m \in \mathbb{N}$ independent of $t_0 \in \mathbb{R}$ such that $\tilde\psi(x_0,t_0) \leq C_1  \tilde\psi(x_0,t_0+1/m)$. Taking $C_2 = (C_1)^m$, we have \begin{equation}\label{eq:lowe2c}
 \tilde\psi(x_0,t_0) \leq C_2  \tilde\psi(x_0,t_0+1)
\end{equation}
Combining \eqref{eq:lowe1} and \eqref{eq:lowe2c}, we have $$
\sup_\Omega \tilde\psi(\cdot,t_0) \leq C_3\inf_\Omega \tilde\psi(\cdot,t_0+1).
$$
Now, using also \eqref{eq:lowe2a}, we obtain \eqref{eq:lowe2b}.

{By parabolic estimates, there exists $C$ independent of $t \in \mathbb{R}$ such that
\begin{align}
\|\tilde\psi\|_{C^{\beta,\beta/2}(\overline{\Omega} \times [t-1/2,t])} +  \|\partial_t \tilde\psi\|_{C^{\beta,\beta/2}(\overline{\Omega} \times [t-1/2,t])} &\leq C \|\tilde\psi\|_{L^{\infty}(\Omega \times (t-1,t))}. \notag\\
%&\leq C \sup_{x \in \Omega} \tilde\psi(x,t) \notag
%\label{eq:tbound}
\end{align}
%where we used \eqref{eq:lowe2b} for the last inequality.
Combining with \eqref{eq:lowe2b}, we have
$$
\|\tilde\psi\|_{C^{\beta,\beta/2}(\overline{\Omega} \times [t-1/2,t])} +  \|\partial_t \tilde\psi\|_{C^{\beta,\beta/2}(\overline{\Omega} \times [t-1/2,t])} \leq C \int_{\Omega} \tilde\psi(x,t)\,dx
$$
for some constant $C$ that is independent of $t \in \mathbb{R}$. In particular, if we define 
$$
F(t) := - \int_\Omega \partial_t \tilde\psi(x,t) \,dx \quad\text{  and }\quad G(t) := \int_\Omega {\tilde\psi}(x,t)\,dx,
$$
then there is $C$ independent of $t$ such that}
$$
{|F(t)|} + \frac{|F(t) - F(s)|}{|t-s|^{\beta/2}}  + \frac{|G(t) - G(s)|}{|t-s|^{\beta/2}} \leq C G(t) \quad \text{ for }s \in [t-1/2,t). 
$$
Since $H_1(t) = F(t)/G(t)$, we obtain $\|H_1\|_{C(\mathbb{R})} \leq C$ and 
$$
\frac{|H_1(t) - H_1(s)|}{|t-s|^{\beta/2}} \leq \frac{1}{G(t)}\frac{|F(t) - F(s)|}{|t-s|^{\beta/2}}  +  \frac{|F(s)|}{G(s)G(t)}\frac{|G(t) - G(s)|}{|t-s|^{\beta/2}}   \leq C %\quad \text{ for }s \in [t-1/2,t).
$$
for $s \in [t-1/2,t)$. 
This proves the first half of \eqref{eq:hhold}.

Next, it follows from \eqref{eq:lowe1} and \eqref{eq:lowe12} that
$$
\psi_1(y,t) \leq C  \psi_1(x,t) \quad \text{ for all }x,y \in \Omega,\, t \in \mathbb{R}.
$$
where $C$ is independent of $x,y,t$. By fixing $x,t$ and integrate over $y \in \Omega$, %we obtain
$$
1 = \int_\Omega \psi_1(y,t)\,dy \leq C |\Omega| \psi_1(x,t) \quad \text{ for all }x\in\Omega,\, t\in \mathbb{R},
$$
where use used the normalization $\int_\Omega \psi_1(y,t)\,dy\equiv 1$. This
proves the lower bound of
\eqref{eq:expbound}. The upper bound can be proved in a similar manner.

Finally, the second half of \eqref{eq:hhold} follows by applying Schauder's estimate on \eqref{eq:bundlen}, using $H_1\in C^{\beta/2}(\mathbb{R})$ (the first half of \eqref{eq:hhold}) and $\|\psi_1\|_{L^\infty(\Omega\times\mathbb{R})} \leq C$ (from \eqref{eq:expbound}).
\end{proof}

%\subsection{Smooth dependence on coefficients}\label{sec:a.3}

\subsection{The decomposition and exponential separation}\label{sec:a.2}
For fixed $\beta \in (0,1)$ and given functions $a_{ij}, b_j, c\in C^{\beta,\beta/2}(\overline{\Omega}\times \mathbb{R})$,  and $p \in C^{1+\beta,(1+\beta)/2}(\overline{\Omega}\times \mathbb{R})$ let $(\psi_1(x,t), H_1(t))$ be given as in Theorem \ref{thm:A1}. Consider the non-autonomous problem
\begin{equation}\label{eq:w}
\left\{\begin{array}{ll}
\partial_t u  - \mathcal{L}_t u  = H_1(t)u  &\text{ for }x \in \Omega,\, t \geq s,\\
{\mathcal{B}_t} u = 0 &\text{ for }x \in \partial \Omega,\, t \geq s,\\
u(x,s) = u_0(x) &\text{ for }x \in \Omega.
\end{array}\right.
\end{equation}
In the notation of \cite[Ch. 6]{Lunardi},  we will henceforth regard \eqref{eq:w} as a non-autononmous problem and 
denote the corresponding evolution operator by $U(t,s)$, such that for $t \geq s$, $u(x,t)= U(t,s)[u_0](x)$ is the unique solution to \eqref{eq:w}.  It is immediate that for the normalized principal Floquet bundle $\psi_1$, we have
\begin{equation}\label{eq:aw}
U(t,s)[\psi_1(\cdot, s)] = \psi_1(\cdot, t) \quad \text{ whenever }t \geq s.
\end{equation}
Define,
for each $t\in\mathbb{R}$, $X^1(t):= \text{span}\,\{\psi_1(\cdot,t)\},$
and
$$
X^2(t):=\big\{ u_0 \in L^2(\Omega):\,  U(\tilde t,t)[u_0](x) \,\text{ has a zero in }\Omega\text{ for all }\tilde t \in (t,\infty)\big\}
$$
 Then 
 $X^1(t)$ and $X^2(t)$ are forward-invariant under $U(t,s)$, i.e.
 $$
 U(t,s)(X^1(s))  = X^1(t)\quad \text{ and }\quad  U(t,s)(X^2(s))  \subseteq X^2(t) \quad \text{ for }t \geq s.
 $$
Also, it follows by \cite{Huska2006,Huska2007} that 
\begin{equation}\label{eq:decompose}
L^2(\Omega) = X^1(t) \oplus X^2(t) \quad \text{ for each }t \in \mathbb{R}.
\end{equation}
By \eqref{eq:expbound} and the normalization $\|U(t,s)\psi_1(\cdot,s)\|_{L^1(\Omega)} = \|\psi_1(\cdot,s)\|_{L^1(\Omega)}=1$, there is $C$ such that
$$
\|U(t,s)\psi_1(\cdot,s)\|_{L^2(\Omega)} \geq C \|\psi_1(\cdot,s)\|_{L^2(\Omega)} \quad \text{ for any }t \geq s.
$$
It then follows from \cite[Theorem 2.1]{Huska2006} that
 there are constants $C,\gamma>0$ independent of time, such that 
\begin{equation}\label{eq:exposeparate}
\|U(t,s)v_0\|_{L^2(\Omega)} \leq C e^{-\gamma(t-s)}\|v_0\|_{L^2(\Omega)} \quad \text{ for }t \geq s\text{ and }v_0 \in X^2(s).
\end{equation}
% (Here we made use of \cite[Theorem 2.1]{Huska2006}, the uniform $L^\infty$ bounds \eqref{eq:expbound}, and the fact that
% ${\|U(t,s)u_0\|_{L^1(\Omega)}}={\|u_0\|_{L^1(\Omega)}}$ for $u_0 \in X^1(s)$, which follows by the definition of the normalized principal Floquet bundle $\psi_1$.)

Let $\beta>0$ be as given at the beginning of the appendix. In the following we will impose one of the following additional assumptions:
\begin{itemize}
    \item[\rm(H1)] The coefficients $a_{ij}, p_i$ are independent of $t$, and $a_{ij} \in C^{\beta}(\overline\Omega)$, $b_j,c \in C^{\beta,\beta/2}(\overline\Omega\times\mathbb{R})$, $p_i \in C^{1+\beta}(\partial\Omega)$; or
    \item[]
    \item[\rm(H2)] $a_{ij},b_j,c \in C^{\beta,(1+\beta)/2}(\overline\Omega\times\mathbb{R})$, 
    $p_i \in C^{1+\beta,(1+\beta)/2}(\partial\Omega\times\mathbb{R}) \cap C^{2,0}(\partial\Omega\times\mathbb{R})$.
\end{itemize}
We recall the following optimal regularity result concerning \eqref{eq:w}.
\begin{lemma}  \label{lem:optimalregular} 
Assume $a_{ij},b_j,c$ satisfies \eqref{eq:boundM2} and one of {\rm(H1)} or {\rm(H2)}.
{For each $T>0$ and $0<\alpha<1$, there exists a constant ${C}_T$ independent of $s \in \mathbb{R}$ and $u_0 \in C^\alpha(\overline\Omega)$ such that the solution  $u$ to the non-autonomous problem \eqref{eq:w} satisfies}
$$
\|u\|_{C^{\alpha,\alpha/2}(\overline\Omega \times [s, s+T])} \leq C_T \|u_0\|_{C^\alpha(\overline\Omega)}.
$$
% $t \mapsto U(t,s)u_0$ belongs to $C^{\alpha/2}_{loc}([s,\infty); C^{\alpha}(\overline\Omega))$. Moreover, there is a constant $C_T$  such that for any $s \in \mathbb{R}$, }
% \begin{equation}
%     \|U(\cdot,s)[u_0]\|_{C^{\alpha/2}([s,s+T]; C^\alpha(\overline\Omega))} \leq C_T\|u_0\|_{C^\alpha(\overline\Omega)} 
% \end{equation}
\end{lemma}
\begin{proof}
%In the special case when the coefficients of $\mathcal{L}_t$ and $\mathcal{B}_t$ are independent of $t$, the lemma follows from \cite[Theorem 5.1.17(ii)]{Lunardi}. 
%First we assume {\rm(H1)} holds and 
Let the sectorial operator $\mathcal{L}_t$ and boundary operator $\mathcal{B}_t$ be given by \eqref{eq:Lt} and \eqref{eq:Bt} respectively. For each $t$, the domain of $\mathcal{L}_t$ is given by
\begin{align*}
D(\mathcal{L}_t)&=\left\{ \phi \in \bigcap_{p >1} W^{2,p}(\Omega): \mathcal{L}_t\phi \in C(\bar\Omega),\,\, \text{ and }\,\,\mathcal{B}_t \phi \big|_{\partial\Omega} = 0  \right\}\\
&=\left\{ \phi \in \bigcap_{p >1} W^{2,p}(\Omega): a_{ij}(\cdot,t)\partial^2_{x_ix_j}\phi \in C(\bar\Omega),\,\, \text{ and }\,\,\mathcal{B}_t \phi \big|_{\partial\Omega} = 0  \right\},
\end{align*}
which can vary with time. 
By \cite[Theorem 6.2]{Acquistapace1987a} (see also \cite[Thm. 3.1.30]{Lunardi}), 
\begin{equation}\label{eq:interpol}
(C(\overline\Omega);D(\mathcal{L}_t))_{\alpha/2,\infty} = C^{\alpha}(\overline\Omega) \quad \text{ for }0 < \alpha < 1,
\end{equation}
where for $\theta\in (0,1)$, we denote $(X,Y)_{\theta,\infty}$ to be the real interpolation space between $X \supset Y$ with its usual norm (cf. \cite{Lions1959} and \cite[Sect. 1.2.2]{Lunardi}).

Under assumption {\rm(H1)}, then $\mathcal{B}_t$ and thus the domain of 
$\mathcal{L}_t$, given by
$$
D(\mathcal{L}_t)=\left\{ \phi \in \bigcap_{p >1} W^{2,p}(\Omega): a_{ij}(\cdot)\partial^2_{x_ix_j}\phi \in C(\bar\Omega),\,\, \text{ and }\,\,  p_0\phi + p_i\partial_{x_i}\phi \big|_{\partial\Omega} = 0  \right\}
$$
is independent of $t$. 
In this case, it follows from \cite[(6.1.19) and Corollary 6.1.9(iii)]{Lunardi} 
that $t\mapsto U(t,s)[u_0]$ belongs to $L^\infty([s,s+T];C^{\alpha}(\bar\Omega)) \cap C^{\alpha/2}([s,s+T];C^0(\overline\Omega))$. Hence, $u(x,t) = U(t,s)[u_0](x) \in C^{\alpha,\alpha/2}(\overline\Omega)$.

Next, we consider the case {\rm(H2)} In this case, $D(\mathcal{L}_t)$ is time-varying. However, by \eqref{eq:interpol},  the interpolation space $(C(\overline\Omega), D(\mathcal{L}_t))_{\alpha/2,\infty}$ remains constant in time for each $0< \alpha<1$.
In this case, 
the method due to Acquistapace et al. \cite{Acquistapace1988} proves the existence of an evolution operator $U(t,s)$ in the Banach space $X=C(\overline\Omega)$.
%i.e. \begin{equation}\label{eq:aw}
% U(t,s)[\psi_1(\cdot, s)] = \psi_1(\cdot, t) \quad \text{ whenever }t \geq s.
% \end{equation}
Let $u_0 \in C^{\alpha}(\bar\Omega)$ be given for some $0<\alpha<1$. Then, \cite[Thm. 4.1(iii) and Thm. 4.2(iii)]{Acquistapace1988} say 
that $t\mapsto U(t,s)[u_0]$ belongs to $L^\infty([s,s+T];C^{\alpha}(\bar\Omega)) \cap C^{\alpha/2}([s,s+T];C^0(\overline\Omega))$. Hence, $u(x,t) = U(t,s)[u_0](x) \in C^{\alpha,\alpha/2}(\overline\Omega)$. We remark that the hypotheses I and II of \cite{Acquistapace1988} are verified in  \cite[Example 2]{Acquistapace1986} and \cite[Theorem 7.9]{Acquistapace1987}. This is where the additional H\"{o}lder  regularity of $a_{ij},b_j,c$ in time, with an exponent greater than $1/2$, is used.
\end{proof}
 Next, we discuss time and space regularity of the two projections $P^i(t)$.
\begin{lemma}\label{lem:A.13}
Assume $a_{ij},b_j,c$ satisfies \eqref{eq:boundM2} and one of {\rm(H1)} or {\rm(H2)}.
{For $i=1,2$ and $t \in \mathbb{R}$, let $P^i(t):L^2(\Omega) \to L^2(\Omega)$ be the projection operator corresponding to the decomposition given in \eqref{eq:decompose}.
\begin{itemize}
    \item[(a)] There exists a constant $\hat{C}_1$ independent of $t$ such that for $i=1,2$,
    $$
            \|P^1(t)[\hat f]\|_{C^\beta(\overline\Omega)} + \|P^2(t)[\hat f]\|_{L^\infty(\overline\Omega)}   \leq \hat{C}_1 \|\hat f\|_{L^\infty(\Omega)} 
            $$
    for every $\hat f \in L^\infty(\Omega)$.
    \item[(b)] There exists a constant $\hat{C}_2$ independent of $t_0 \in \mathbb{R}$ such that 
    $$
    \|P^1(t)[\hat f] - P^1(t_0)[\hat f]\|_{C^0(\overline\Omega)} \leq \hat{C}_2 \|\hat f\|_{C^\beta(\overline\Omega)}|t-t_0|^{\beta/2}% \quad \text{ for }t \in [t_0, t_0 + 2]
    $$
    for all $t \in [t_0, t_0 + 1]$ and all $\hat f \in C^\beta(\overline\Omega)$.
    \item[(c)] For $f \in C^{\beta,\beta/2}(\overline\Omega \times \mathbb{R})$ and  $i=1,2$, the mapping 
    $$
    (x,t) \mapsto P^i(t)[f(\cdot,t)](x) \quad \text{ is bounded in } C^{\beta,\beta/2}(\overline\Omega \times \mathbb{R}).
    $$
        % $t \mapsto P^i(t)[f(\cdot,t)]$ belongs to $C^{\beta/2}(\mathbb{R}; C^\beta(\overline\Omega)).$ In fact, we have
    % $$
    % \| P^i(t)[f(\cdot,t)]\|_{C^{\beta/2}(\mathbb{R}; C^\beta(\overline\Omega))} \leq C \|f\|_{C^{\beta,\beta/2}(\overline\Omega\times \mathbb{R})}.
    % $$
\end{itemize}}
\end{lemma}
\begin{proof}
For assertion (a), observe from the identity $P^2(t)[\hat f] = \hat f - P^1(t)[\hat f]$ that 
$$
\|P^2(t)[\hat f]\|_{L^\infty(\Omega)} \leq \|\hat f\|_{L^\infty(\Omega)} + \|P^1(t)[\hat f]\|_{L^\infty(\Omega)},
$$
so it remains to show 
\begin{equation}\label{eq:A131}
    \|P^1(t)[\hat f]\|_{C^\beta(\overline\Omega)} \leq C \|\hat f\|_{L^\infty(\Omega)}.
\end{equation}
Given $\hat{f}(x) \in L^\infty(\Omega)$, we write
\begin{equation}\label{eq:A114}
    P^1(t)[\hat f](x) = \sigma(t) \psi_1(x,t) \quad \text{ for }x \in \Omega,\, t\in \mathbb{R}.
\end{equation}
Since $\psi_1 \in C^{2+\beta,1+\beta/2}(\overline\Omega \times \mathbb{R})$, it suffices to show that
\begin{equation}\label{eq:A.113}
    \sup\limits_{t \in \mathbb{R}}|\sigma(t)| \leq C\|\hat f\|_{L^\infty(\Omega)}.
\end{equation}
with some $C$ independent of $t$. 
Since $P^2(t)[\hat f]$ has to change sign on $\Omega$, 
$$
\hat f(x) - \sigma(t) \psi_1(x,t) = P^2(t)[\hat f] \quad \text{ changes sign on }\Omega.
$$
%Taking maximum on both sides in $\Omega$, 
%we get $|\sigma(t)| \leq \|f\|_{L^\infty(\Omega)}/ \inf_\Omega \psi_1(\cdot,t)$. 
In view of the bound \eqref{eq:expbound},  we deduce \eqref{eq:A.113}. 
This proves \eqref{eq:A131}.

For assertion (b), we let $\hat{f} \in C^\beta(\overline\Omega)$ be given. 
% If $t - t_0 \geq 1$, then by (a), 
% $$
% \|P^1(t)[\hat f] - P^1(t_0)[\hat f]\|_{C^0(\overline\Omega)} \leq 2 \|\hat f\|_{L^\infty(\Omega)} |t-t_0|^{\beta/2}.
% $$
% It remains to consider the case $t-t_0 \leq 1$.
Recall that $U(t,s)$ denotes the evolution operator to \eqref{eq:w}. 
Recalling \eqref{eq:A114}, then
$$
\hat{f}(x) =P^1(t_0)[\hat f](x) + P^2(t_0)[\hat f](x) = \sigma(t_0)\psi_1(t_0,x)  + P^2(t_0)[\hat f](x),
$$
then
the forward-invariance of $X^1(t)$ and $X^2(t)$ under $U(t,s)$ implies
$$
U(t,t_0)[\hat{f}](x) = \sigma(t_0)\psi_1(t,x) + P^2(t)[U(t,t_0)[\hat{f}]](x),
$$
where we used $U(t,t_0)P^2(t_0)=P^2(t)U(t,t_0)$.
Taking $P^1(t)$ on both sides of the above, we get
\begin{equation}\label{eq:A.16b}
P^1(t)[U(t,t_0)[\hat{f}]](x) = \sigma(t_0) \psi_1(t,x).\end{equation} 
Hence,
\begin{align*}
&\quad \|P^1(t)[\hat f] - P^1(t_0)[\hat f]\|_{C^0(\overline\Omega)} \\
&\leq   \|P^1(t)[\hat f] - P^1(t)[U(t,t_0)[\hat{f}]]\|_{C^0(\overline\Omega)} +  \|\sigma(t_0) (\psi_1(t,\cdot) - \psi_1(t_0,\cdot))\|_{C^0(\overline\Omega)} \\
&=\|P^1(t)[\hat f(\cdot) - U(t,t_0)[\hat{f}]]\|_{C^0(\overline\Omega)} +  |\sigma(t_0)| \cdot  \|\psi_1(t,\cdot) - \psi_1(t_0,\cdot)\|_{C^0(\overline\Omega)}\\
&\leq C \|\hat f(\cdot) - U(t,t_0)[\hat{f}]\|_{L^\infty(\Omega)} + C\|\hat f\|_{L^\infty(\Omega)} |t-t_0|^{\beta/2} \\
&\leq C\left\{[U(t,t_0)[\hat{f}]]_{C^{\beta,\beta/2}(\overline\Omega \times [t_0,t_0+1])}  + \|\hat f\|_{L^\infty(\Omega)}\right\} |t-t_0|^{\beta/2}\\
&\leq C \|\hat f\|_{C^\beta(\overline\Omega)} |t-t_0|^{\beta/2},
\end{align*}
where the first inequality follows from \eqref{eq:A114} and \eqref{eq:A.16b},  the second inequality from 
 \eqref{eq:A.113} and  $\psi_1 \in C^{\beta,\beta/2}(\overline\Omega \times \mathbb{R}) \subset C^{\beta/2}(\mathbb{R}; C^{0}(\overline\Omega))$, while
the last inequality follows from {Lemma \ref{lem:optimalregular}.}

Next, we show assertion (c). Let $f \in C^{\beta,\beta/2}(\overline\Omega \times \mathbb{R})$ be given, 
and define 
$$
g(x,t):= P^1(t)[f(\cdot,t)](x)
$$
By part (a), we see that $g \in L^\infty( \mathbb{R}; C^\beta(\overline\Omega))$. It remains to establish the H\"{o}lder regularity in time, i.e. $g \in C^{\beta/2}( \mathbb{R}; C^0(\overline\Omega))$. On the one hand, for $t - t_0 \geq 1$, we have
$$
\|g(\cdot,t) - g(\cdot,t_0)\|_{C^0(\overline\Omega)} \leq 2\|g\|_{L^\infty( \mathbb{R}; C^\beta(\overline\Omega))} |t-t_0|^{\beta/2}.
$$
On the other hand, for $0 \leq t - t_0 < 1$, we apply (a) and (b) to get
\begin{align*}
  &\quad   \|g(\cdot,t) - g(\cdot,t_0)\|_{C^0(\overline\Omega)} \\&\leq    \|P^1(t)[f(\cdot,t)-f(\cdot,t_0)]\|_{C^0(\overline\Omega)} + \|P^1(t)[f(\cdot,t_0)] - P^1(t_0)[f(\cdot,t_0)]\|_{C^0(\overline\Omega)}\\
    &\leq C\| f(\cdot,t)-f(\cdot,t_0)\|_{L^\infty(\Omega)} + C \|f(\cdot,t_0)\|_{C^\beta(\overline\Omega)} |t-t_0|^{\beta/2}\\
    &\leq C |f|_{C^{\beta,\beta/2}(\overline\Omega \times \mathbb{R})} |t-t_0|^{\beta/2}. 
\end{align*}
Hence, $g \in C^{\beta/2}(\mathbb{R}; C^{0}(\overline\Omega)) \cap L^\infty(\mathbb{R}; C^\beta(\overline\Omega))$ and thus $g \in C^{\beta,\beta/2}(\overline\Omega \times \mathbb{R})$.

Finally, since $P^2(t)[f(\cdot,t)](x) = f(x,t) - g(x,t)$, we conclude that $(x,t)\mapsto P^2(t)[f(\cdot,t)](x)$ belongs to $C^{\beta,\beta/2}(\overline\Omega \times \mathbb{R})$ as well.
\end{proof}

\subsection{Smooth dependence on coefficients}\label{sec:a.3}

Define the spaces
$$
X^{(1)}_{\rm coeff}= \left\{(a_{ij},b_j,c,p_j) \in C^{\beta}(\overline\Omega)\times  [C^{\beta,\beta/2}(\overline\Omega\times\mathbb{R})]^2\times C^{1+\beta}(\partial\Omega): \, \eqref{eq:boundM2}\text{ holds}\right\}
$$
and $X^{(2)}_{\rm coeff}$ be the corresponding subset of $[C^{\beta,(1+\beta)/2}(\overline\Omega\times\mathbb{R})]^3\times C^{2,(1+\beta)/2}(\partial\Omega\times\mathbb{R})$. They correspond to hypotheses {\rm(H1)} and {\rm(H2)}. 
Proposition \ref{prop:A2'} is a particular case of the following result.
\begin{proposition}\label{prop:A2}
For $k=1$ or $2$, the following mapping is smooth:
\begin{align*}
    X^{(k)}_{\rm coeff}\qquad &\rightarrow \quad C^{2+\beta,1+\beta/2}(\overline\Omega \times \mathbb{R})\times C^{\beta/2}( \mathbb{R})\\
    \mathcal{A}=(a_{ij},b_j, c,p_j) \qquad &\mapsto \quad  \qquad  (\psi_1,H_1).
\end{align*}
% the quantities $(\psi_1, H_1)$ depend smoothly on the coefficients $\mathcal{A}=(a_{ij},b_j,c,p_j) \in X^{(k)}_{\rm coeff}$.
\end{proposition}

\begin{proof}%[Proof of Proposition \ref{prop:A.2}]
%We consider general parabolic operator $\partial_t - \mathcal{L}$ with coefficients $\mathcal{A}$. 
We denote $\psi_1=\psi_1(x,t;\mathcal{A})$ and $H_1 = H_1(t;\mathcal{A})$
to stress the dependence of the normalized principal Floquet bundle on the coefficients $\mathcal{A}=(a_{ij},b_j,c,p_j)$ of $\mathcal{L}_t,\mathcal{B}_t$.
In the following, we will prove the smooth dependence. (We remark that the continuous dependence of $(\psi_1,H_1)$ on $\mathcal{A}$ follows from the  uniqueness
of the pair and standard parabolic regularity; see \cite{Huska2006} for details.) 
Consider the mapping 
\begin{align*}
&\mathcal{F}\, : \, C_{\mathcal{B}}^{2+\beta,1+\beta/2}(\overline{\Omega} \times \mathbb{R})\,\times\, C^{\beta/2}(\mathbb{R})  \,\times\, {X_{\rm coeff}}^{(k)}\\
&\qquad \qquad \qquad \qquad \qquad \qquad \qquad \qquad \,\, \longrightarrow \,\, C^{\beta,\beta/2}(\overline{\Omega} \times \mathbb{R}) \times C^{1+\beta/2}(\mathbb{R})
\end{align*}
that is defined by
$$
\mathcal{F}(\psi(x,t), H(t), \mathcal{A}):= 
\left( \begin{array}{c}
\partial_t \psi(x,t) - \mathcal{L}\psi(x,t) -H(t)\psi(x,t) \\
{\int_\Omega \psi(x,t)\,dx -1}
\end{array}
\right),
$$
where
$$
C_{\mathcal{B}}^{2+\beta,1+\beta/2}(\overline{\Omega} \times \mathbb{R}) = \{ u \in C^{2+\beta,1+\beta/2}(\overline{\Omega} \times \mathbb{R}):\, \mathcal{B}_t u = 0 \,\,\text{ in }\partial \Omega \times \mathbb{R}\}.
$$
Then, for each fixed $\mathcal{A}=(a_{ij},b_j,c,p_j) \in X_{\rm coeff}^{(k)}$,  $$\mathcal{F}(\psi_1(\cdot,\cdot;\mathcal{A}), H_1(\cdot;\mathcal{A}), \mathcal{A}) = 0.$$ To prove the smooth dependence on $\mathcal{A}$, it suffices to show that 
\begin{equation}\label{eq:1329}
D_{(\psi,H)}\mathcal{F}=D_{(\psi,H)}\mathcal{F}(\psi_1(\cdot,\cdot;\mathcal{A}), H_1(\cdot;\mathcal{A}), \mathcal{A}),
\end{equation} 
as a mapping from $C_{\mathcal{B}}^{2+\beta,1+\beta/2}(\overline{\Omega}\times \mathbb{R})\,\times\, C^{\beta/2}(\mathbb{R})$ to 
$C^{\beta,\beta/2}(\overline{\Omega} \times \mathbb{R})\,\times\, C^{1+\beta/2}(\mathbb{R})
$, is invertible. To this end, given $(f(x,t),G(t)) \in C^{\beta,\beta/2}(\overline{\Omega}\times\mathbb{R})\,\times\, C^{1+\beta/2}(\mathbb{R})$, we need to prove the existence and uniqueness of $(w(x,t),Y(t))$ in the class $C_{\mathcal{B}}^{2+\beta,1+\beta/2}(\overline{\Omega}\times \mathbb{R})\,\times\, C^{\beta/2}(\mathbb{R})$ such that
\begin{equation}\label{eq:IFT_cond}
\left\{
\begin{array}{ll}
\partial_t w - \mathcal{L} w - H_1w - Y(t)\psi_1 = f(x,t) &\text{ for }t \in \mathbb{R},\, x \in \Omega,\\
{\int_\Omega w(x,t) }\,dx = G(t) &\text{ for }t \in \mathbb{R},
\end{array}
\right.
\end{equation}
where $H_1= H_1(t;\mathcal{A})$ and $\psi_1 = \psi_1(x,t;\mathcal{A})$. 
First, we show the existence. We start by choosing $w^\perp$ as
$$
w^\perp(x,t) = \int_{-\infty}^t U(t,\tau)[P^2(s)f(\cdot,\tau)]\,d\tau,
$$
where, for $i=1,2$, $P^i:L^2(\Omega) \to L^2(\Omega)$ is the projection operator corresponding to the decomposition given in \eqref{eq:decompose}, {as discussed in Lemma \ref{lem:A.13}}.
We claim that $w^\perp$ is well defined. Indeed, by
\eqref{eq:exposeparate}, 
we have 
\begin{align}
\sup_{t \in \mathbb{R}}\|w^\perp(\cdot,t)\|_{L^2(\Omega)} 
&\leq \sup_{t \in \mathbb{R}}\left\| \int_{-\infty}^t U(t,\tau)[
P^2(\tau)
[f(\cdot, \tau)]]\, d\tau\right\|_{L^2(\Omega)}  \notag \\
&\leq C\sup_{t \in \mathbb{R}}\int_{-\infty}^t e^{-\gamma(t-s)}\left\| f(\cdot, \tau)\right\|_{{L^\infty}(\Omega)}\, d\tau  \notag \\
&\leq C\sup_{t \in \mathbb{R}}\|f(\cdot, t)\|_{{L^\infty}(\Omega)} < \infty
\label{eq:B.7b} 
\end{align}
where we used {\eqref{eq:exposeparate} and Lemma \ref{lem:A.13}(a) to get the second inequality.} 
Moreover,  since {$w^\perp$} defines an entire solution of $\partial_t w^\perp - \mathcal{L} w^\perp - H_1(t) w^\perp = P^2(t)[f(\cdot,t)]$ with homogeneous oblique boundary condition, {and that the right hand side $P^2(t)[f(\cdot,t)]$ is bounded in $C^{\beta,\beta/2}(\overline\Omega \times \mathbb{R})$ (by Lemma \ref{lem:A.13}(c)),} it follows by parabolic regularity estimates \cite[Chapter IV, Theorem 4.30]{Lieberman} that $w^\perp\in C_{\mathcal{B}}^{2+\beta,1+\beta/2}(\overline\Omega\times \mathbb{R})$. Next, define $w \in C_{\mathcal{B}}^{2+\beta,1+\beta/2}(\overline\Omega\times \mathbb{R})$ to be
$$
w(x,t) = w^\perp(x,t) + \left[ - { \int_\Omega w^\perp(y,t)\,dy} + G(t) \right] \psi_1(x,t).
$$
Then $w$ satisfies the second part of \eqref{eq:IFT_cond}. Moreover, we have
\begin{align*}
&    \partial_t w - \mathcal{L}w - H_1(t)w \\
&    = P^2(t)[f(\cdot,t)]+ \left\{ - \frac{d}{dt} \left[{ \int_\Omega w^\perp(y,t)\,dy} \right] + G'(t)\right\}\psi_1(x,t).
\end{align*}
It therefore suffices to choose $Y(t)$ such that 
$$
Y(t)\psi_1(x,t) + P^1(t)[f(\cdot,t)] = \left\{ - \frac{d}{dt}\left[{ \int_\Omega w^\perp(y,t)\,dy} \right] + G'(t)\right\}\psi_1(x,t).
$$
{Note that $Y \in C^{\beta/2}( \mathbb{R})$ by using the $C^{2+\beta,1+\beta/2}$ regularity of $w^\perp$ and Lemma \ref{lem:A.13}(c).} 
Then $(w,Y) \in C_{\mathcal{B}}^{2+\beta,1+\beta/2}(\overline\Omega\times \mathbb{R})\times C^{\beta,\beta/2}(\overline\Omega\times \mathbb{R})$ satisfies \eqref{eq:IFT_cond}. This proves existence.

For the uniqueness, set $f=0$ and $G=0$, then using the the variation of constants formula for $\partial_t w - \mathcal{L} w - H_1(t) w =  Y(t) \psi_1(x,t)$, we get
\begin{align*}
w(\cdot,t) &=  U(t,s)w(\cdot,s) + \int_{s}^t U(t,\tau)[Y(\tau)\psi_1(\cdot,\tau)]\,d\tau\\
&= U(t,s)w(\cdot,s) + \int_s^t Y(\tau) \{U(t,\tau) [\psi_1(\cdot,\tau)]\}\,d\tau\\
&= U(t,s)w(\cdot,s) + \int_s^t Y(\tau) \psi_1(\cdot,t) \,d\tau\\
&= U(t,s)w(\cdot,s) + \left[\int_s^t Y(\tau)  \,d\tau\right] \psi_1(\cdot,t) \quad \text{ for }t>s,
\end{align*}
where we used \eqref{eq:aw} for the third equality. Hence, we deduce  
\begin{equation}\label{eq:varconst}
w(\cdot,t)= U(t,s)w(\cdot,s) + \left[\int_s^t Y(\tau)  \,d\tau\right] \psi_1(\cdot,t)  \quad \text{ for any } t > s.
\end{equation}
Next,  apply the projection $P^2(t)$ on both sides of \eqref{eq:varconst},
$$
P^2(t)[w(\cdot,t)] = P^2(t)[U(t,s)w(\cdot,s)] = U(t,s)P^2(s)[w(\cdot,s)].
$$
provided $t,s \in \mathbb{R}$ and $t > s$. This implies
\begin{align}
\Big\|P^2(t)[w(\cdot,t)]\Big\|_{L^2(\Omega)}  
&\leq C e^{-\gamma(t-s)} \Big\|P^2(s)[w(\cdot,s)]\Big\|_{L^2(\Omega)}  \notag\\
&\leq C e^{-\gamma(t-s)}\Big\|w(\cdot,s)\Big\|_{{L^\infty}(\Omega)}  \leq  C e^{-\gamma(t-s)},\label{eq:iiii}
\end{align}
where we used \eqref{eq:exposeparate} for the first inequality, {Lemma \ref{lem:A.13}(a) for the second one}, 
and the fact that $w \in C^{2+\beta,1+\beta/2}(\overline\Omega \times \mathbb{R})$ for the third one. Letting $s \to -\infty$ in \eqref{eq:iiii}, we deduce that $P^2(t)[w(\cdot,t)] = 0$ for each $t\in\mathbb{R}$. Hence, $w(\cdot,t) \in X^1(t)$ and thus $w(\cdot,t) = \sigma(t)\psi_1(\cdot,t)$ for some function $\sigma(t)$. Now, using $G(t) \equiv 0$, the  second equation in \eqref{eq:IFT_cond} gives 
$$
{0 = \int_\Omega w(x,t) \,dx =  \sigma(t) \int_\Omega \psi_1(x,t)\,dx =\sigma(t)}
$$
for each $t\in \mathbb{R}$. This implies $w(x,t) \equiv 0$. Substituting into \eqref{eq:varconst}, we have
$$
\int_s^t Y(\tau)\,d\tau = 0 \quad \text{ for any }t,s \in \mathbb{R}, \, t > s,
$$
which means $Y(t) \equiv 0$ as well. 
This proves uniqueness. 

Having shown that $D_{(\psi,H)} \mathcal{F}$ given in \eqref{eq:1329} is an isomorphism, we may apply the implicit function theorem to conclude the smooth dependence of the normalized principal Floquet bundle $(\psi_1(x,t),H_1(t))$ on the coefficients $\mathcal{A}$. This concludes the proof.
\end{proof}

\end{document}